\providecommand\hyper@newdestlabel[2]{}
\providecommand\HyperFirstAtBeginDocument{\AtBeginDocument}
\global\let\oldcontentsline\contentsline
\gdef\contentsline#1#2#3#4{\oldcontentsline{#1}{#2}{#3}}
\global\let\oldnewlabel\newlabel
\gdef\newlabel#1#2{\newlabelxx{#1}#2}
\gdef\newlabelxx#1#2#3#4#5#6{\oldnewlabel{#1}{{#2}{#3}}}
\let\contentsline\oldcontentsline
\let\newlabel\oldnewlabel
\global\let\hyper@last\relax
\gdef\HyperFirstAtBeginDocument#1{#1}
\providecommand\HyField@AuxAddToFields[1]{}
\providecommand\HyField@AuxAddToCoFields[2]{}
\gdef \@abspage@last{54}
\theoremstyle{plain}
\newtheorem{thm}{Theorem}[section]
\newtheorem{prop}[thm]{Proposition}
\newtheorem{cor}[thm]{Corollary}
\newtheorem{lem}[thm]{Lemma}
\newtheorem*{thm*}{Theorem}
\newtheorem*{conj*}{Conjecture}
\newtheorem*{prop*}{Proposition}
\theoremstyle{definition}
\newtheorem{defi}[thm]{Definition}
\newtheorem*{nota*}{Notation}
\newtheorem{rem}[thm]{Remark}
\newtheorem{ex}[thm]{Example}
\newtheorem{ass}[thm]{Assumption}
\newtheorem{idea}{Idea}
\newtheorem{prob}[idea]{Problem}
\newtheorem*{prob*}{Problem}
\newtheoremstyle{introTheorems}
  {}
  {}
  {\itshape}
  {}
  {\bfseries}
  {}
  { }
  {\thmname{#1}
  \textnormal{\bf \thmnote{#3}.$\hspace{-.1cm}$}
  }
\theoremstyle{introTheorems}
\newtheorem{introTheorem}{Theorem}
\newcommand*{\addFileDependency}[1]{
\typeout{(#1)}
%
%
\@addtofilelist{#1}
%
\IfFileExists{#1}{}{\typeout{No file #1.}}
}\makeatother
\newcommand{\Q}{\mathbb{Q}}
\newcommand{\Z}{\mathbb{Z}}
\newcommand{\Ns}{\mathbb{Z}_{>0}}
\newcommand{\N}{\mathbb{Z}_{\geq0}}
\newcommand{\C}{\mathbb{C}}
\newcommand{\R}{\mathbb{R}}
\newcommand{\tr}{\operatorname{tr}}
\renewcommand{\i}{\mathrm{i}}
\newcommand{\e}{\operatorname{e}} 
\newcommand{\Aut}{\operatorname{Aut}}
\newcommand{\Hom}{\operatorname{Hom}}
\newcommand{\rk}{\operatorname{rk}}
\newcommand{\sign}{\operatorname{sign}}
\newcommand{\voa}{vertex operator algebra}
\newcommand{\VOA}{Vertex Operator Algebra}
\newcommand{\id}{\operatorname{id}}
\newcommand{\eps}{\varepsilon}
\newcommand{\Irr}{\operatorname{Irr}}
\newcommand{\II}{{I\!I}}
\renewcommand{\d}{\mathrm{d}}
\newcommand{\Vect}{\operatorname{Vect}}
\newcommand{\no}{\,{\raise0.25em\hbox{$\mathop{\hphantom{\cdot}}\limits^{_{\circ}}_{^{\circ}}$}}\,}
\newcommand{\cC}{\mathcal{C}}
\newcommand{\cD}{\mathcal{D}}
\newcommand{\cB}{\mathcal{B}}
\newcommand{\sslash}{/\mkern-6mu/}
\newcommand{\Rep}{\operatorname{Rep}}
\newcommand{\TY}{\mathcal{TY}}
\newcommand{\LM}{\mathcal{G\!LM}}
\newcommand{\X}{\mathsf{X}}
\newcommand{\SMatrix}{\mathcal{S}}
\newcommand{\TMatrix}{\mathcal{T}}
\newcommand{\eqi}{\varphi}
\newcommand{\DiscQ}{{\bar{Q}}}
\newcommand{\DiscS}{{\bar{\sigma}}}
\newcommand{\DiscO}{{\bar{\omega}}}
\newcommand{\eval}{\operatorname{ev}}
\newcommand{\coeval}{\operatorname{coev}}
\newcommand{\unit}{\mathbf{1}}
\newcommand{\baromega}{\omega}
\newcommand{\longsquare}[9]{ 
\begin{center}
\begin{tikzpicture}[commutative diagrams/every diagram]
\node (P1) at (-4cm,0.7cm)
{$g_*(($#1$\otimes$#2$)\otimes$#3$)$};
\node (P2) at (0cm,0.7cm)
{$g_*($#1$\otimes$#2$)\otimes g_*$#3};
\node (P3) at (4cm,0.7cm)
{$(g_*$#1$\otimes g_*$#2$)\otimes g_*$#3};
\node (P4) at (-4cm,-0.7cm)
{$g_*($#1$\otimes ($#2$\otimes$#3$))$};
\node (P5) at (0cm,-0.7cm)
{$g_*$#1$\otimes g_*($#2$\otimes$#3$)$};
\node (P6) at (4cm,-0.7cm)
{$g_*$#1$\otimes (g_*$#2$\otimes g_*$#3$)$};
\path[commutative diagrams/.cd, every arrow, every label]
(P1) edge node {#4} (P4)
(P3) edge node[swap] {#5} (P6)
(P1) edge node[shift={(0,0.25)}] {#6} (P2)
(P2) edge node[shift={(0,0.25)}] {#7} (P3)
(P4) edge node[swap, shift={(0,-0.25)}] {#8} (P5)
(P5) edge node[swap, shift={(0,-0.25)}] {#9} (P6);
\end{tikzpicture}
\end{center}
}
\newcommand{\pentagon}[9]{ 
\begin{center}
\begin{tikzpicture}[commutative diagrams/every diagram]
\node (P1) at (180:3.5cm)
{$(($#1$\otimes$#2$)\otimes$#3$)\otimes$#4};
\node (P2) at (180-45+10:2.5cm)
{$($#1$\otimes ($#2$\otimes$#3$))\otimes$#4};
\node (P3) at (180-45-90-10:2.5cm)
{#1$\otimes (($#2$\otimes $#3$)\otimes $#4$))$} ;
\node (P4) at (180-45-90-45:3.5cm)
{#1$\otimes($#2$\otimes ($#3$\otimes $#4$))$};
\node (P0) at (180+90:1cm)
{$($#1$\otimes$#2$)\otimes($#3$\otimes$#4$)$};
\path[commutative diagrams/.cd, every arrow, every label]
(P1) edge node {#5} (P2)
(P2) edge node[shift={(0,0.25)}] {#6} (P3)
(P3) edge node {#7} (P4)
(P1) edge node[swap] {#8} (P0)
(P0) edge node[swap] {#9} (P4);
\end{tikzpicture}
\end{center}
}
\newcommand{\hexagon}[5]{
\def\SpaceA{#1}%
\def\SpaceB{#2}%
\def\aSpaceB{#3}
\def\SpaceC{#4}%
\def\aSpaceC{#5}
\hexagonContinued
}
\newcommand\hexagonContinued[6]{
\begin{center}
\begin{tikzpicture}[commutative diagrams/every diagram,scale=0.70]
\node (P1) at (180:4.5cm)
{$($\SpaceA$\otimes$\SpaceB$)\otimes$\SpaceC};
\node (P2) at (180-60+20:3cm)
{\SpaceA$\otimes($\SpaceB$\otimes$\SpaceC$)$};
\node (P3) at (180-60-60-20:3cm)
{$($\aSpaceB$\otimes$\aSpaceC$)\otimes$\SpaceA};
\node (P4) at (180-60-60-60:4.5cm)
{\aSpaceB$\otimes($\aSpaceC$\otimes$\SpaceA$)$};
\node (P5) at (180+60-20:3cm)
{$($\aSpaceB$\otimes$\SpaceA$)\otimes$\SpaceC};
\node (P6) at (180+60+60+20:3cm)
{\aSpaceB$\otimes($\SpaceA$\otimes$\SpaceC$)$};
\path[commutative diagrams/.cd, every arrow, every label]
(P1) edge node {#1} (P2)
(P2) edge node[shift={(0,0.25)}] {#2} (P3)
(P3) edge node {#3} (P4)
(P1) edge node[swap] {#4} (P5)
(P5) edge node[swap, shift={(0,-0.25)}] {#5} (P6)
(P6) edge node[swap] {#6} (P4);
\end{tikzpicture}
\end{center}
}
\newcommand{\hexagonInverse}[5]{
\def\SpaceA{#1}%
\def\SpaceB{#2}%
\def\SpaceC{#3}%
\def\bSpaceC{#4}
\def\abSpaceC{#5}
\hexagonInverseContinued
}
\newcommand\hexagonInverseContinued[6]{
\begin{center}
\begin{tikzpicture}[commutative diagrams/every diagram,scale=0.70]
\node (P1) at (180:4.5cm)
{\SpaceA$\otimes($\SpaceB$\otimes$\SpaceC$)$};
\node (P2) at (180-60+20:3cm)
{$($\SpaceA$\otimes$\SpaceB$)\otimes$\SpaceC};
\node (P3) at (180-60-60-20:3cm)
{\abSpaceC$\otimes($\SpaceA$\otimes$\SpaceB$)$};
\node (P4) at (180-60-60-60:4.5cm)
{$($\abSpaceC$\otimes$\SpaceA$)\otimes$\SpaceB};
\node (P5) at (180+60-20:3cm)
{\SpaceA$\otimes($\bSpaceC$\otimes$\SpaceB$)$};
\node (P6) at (180+60+60+20:3cm)
{$($\SpaceA$\otimes$\bSpaceC$)\otimes$\SpaceB};
\path[commutative diagrams/.cd, every arrow, every label]
(P1) edge node {#1} (P2)
(P2) edge node[shift={(0,0.25)}] {#2} (P3)
(P3) edge node {#3} (P4)
(P1) edge node[swap] {#4} (P5)
(P5) edge node[swap, shift={(0,-0.25)}] {#5} (P6)
(P6) edge node[swap] {#6} (P4);
\end{tikzpicture}
\end{center}
}
\begin{document}

\title[$\Z_2$-Crossed Tambara-Yamagami-like Categories for Even Groups]{Modular $\Z_2$-Crossed Tambara-Yamagami-like Categories for Even Groups}
\author[César Galindo, Simon Lentner and Sven Möller]{César Galindo,\textsuperscript{\lowercase{a}} Simon Lentner\textsuperscript{\lowercase{b}} and Sven Möller\textsuperscript{\lowercase{b},\lowercase{c}}}

\thanks{\textsuperscript{a}{Universidad de los Andes, Bogotá, Colombia}}
\thanks{\textsuperscript{b}{Universität Hamburg, Hamburg, Germany}}
\thanks{\textsuperscript{c}{Research Institute for Mathematical Sciences, Kyoto University, Kyoto, Japan}}

\thanks{\SMALL Email: \href{mailto:cn.galindo1116@uniandes.edu.co}{\nolinkurl{cn.galindo1116@uniandes.edu.co}}, \href{mailto:simon.lentner@uni-hamburg.de}{\nolinkurl{simon.lentner@uni-hamburg.de}}, \href{mailto:math@moeller-sven.de}{\nolinkurl{math@moeller-sven.de}}}

\begin{abstract}
We explicitly construct nondegenerate braided $\Z_2$-crossed tensor categories of the form $\smash{\Vect_\Gamma\oplus\Vect_{\Gamma/2\Gamma}}$. They are $\Z_2$-crossed extensions, in the sense of \cite{ENO10}, of the braided tensor category $\smash{\Vect_\Gamma}$ with $\Z_2$-action given by $-\!\id$ on the finite, abelian group $\Gamma$. Thus, we obtain generalisations of the Tambara-Yamagami categories \cite{TY98,Gal22}, where now the abelian group $\Gamma$ may have even order and the nontrivial sector $\smash{\Vect_{\Gamma/2\Gamma}}$ more than one simple object.

The idea for this construction comes from a physically motivated approach in \cite{GLM24a} to construct $\Z_2$-crossed extensions of $\Vect_\Gamma$ for any $\Gamma$ from an infinite Tambara-Yamagami category $\smash{\Vect_{\R^d}\oplus\Vect}$, which itself is not fully rigorously defined, and then using condensation from $\smash{\Vect_{\R^d}}$ to $\Vect_\Gamma$, which we prove commutes with crossed extensions.

The $\mathbb{Z}_2$-equivariantisation of $\smash{\Vect_\Gamma\oplus\Vect_{\Gamma/2\Gamma}}$ yields new modular tensor categories, which correspond to the orbifold of an arbitrary lattice \voa{} under a lift of $-\!\id$, as discussed in \cite{GLM24a}.
\end{abstract}

\maketitle
\setcounter{tocdepth}{1}
\tableofcontents
\setcounter{tocdepth}{2}


\section{Introduction}

Braided tensor categories and their specialisations, such as modular tensor categories, have emerged as important structures in low-dimensional physics, in particular $2$-di\-men\-sion\-al conformal field theories (e.g., in the form of \voa{}s or conformal nets) and topological phases of matter. In the presence of an action of a finite group~$G$, this notion is naturally modified to that of a braided $G$-crossed tensor category in the sense of \cite{Tur00}.

\medskip

At the centre of this article is the notion of a braided $G$-crossed extension, i.e.\ a braided $G$-crossed tensor category extending a given braided tensor category \cite{ENO10}. More precisely, given a braided tensor category $\cB$ together with an action of a finite group $G$ by braided tensor autoequivalences, a \emph{braided $G$-crossed extension} $\cC\supset\cB$ is a $G$-graded tensor category $\cC=\bigoplus_{g\in G}\cC_g$ equipped with a $G$-action satisfying $g_*\cC_h=\cC_{ghg^{-1}}$ and with a $G$-crossed braiding $\cC_g\otimes\cC_h\to\cC_{ghg^{-1}}\otimes\cC_g$ such that the identity component is $\cC_1=\cB$ with the given braiding and $G$-action.

For a braided $G$-crossed extension $\cB\subset\cC$, the $G$-equi\-vari\-an\-ti\-sa\-tion $\cC\sslash G$, also referred to as gauging of $\cB$ by $G$, is a braided tensor category extending $\cB\sslash G$.

Braided $G$-crossed extensions are essential for understanding representation categories of (suitably regular) \voa{}s \cite{McR21b}, as we discuss in more detail in \cite{GLM24a}. See, e.g., \cite{GJ19,DNR25,GR25} for some applications.

\medskip

In this article, we study braided $G$-crossed extensions of \emph{pointed} braided fusion categories $\cB$ \cite{JS93,EGNO15}. The latter have the defining property that all their simple objects are invertible, i.e.\ they are of the form $\smash{\cB=\Vect_\Gamma^Q}$ for some finite, abelian group $\Gamma$ and a quadratic form $Q$ on $\Gamma$ determining the braiding and associator. In the context of lattices, such pairs $(\Gamma,Q)$ appear as discriminant forms, for which $Q$ nondegenerate.

Perhaps the easiest nontrivial examples are the Tambara-Yamagami categories, which provide $G$-crossed extensions of $\smash{\Vect_\Gamma^Q}$ for $G=\Z_2$ acting on $\Gamma$ by multiplication with $-1$. Indeed, in \cite{TY98}, Tambara and Yamagami classified the $\Z_2$-graded fusion categories $\cC$ with exactly one noninvertible simple object in the graded component of nontrivial degree $g\in\Z_2$, i.e.\ of the form
\begin{equation*}
\cC=\cC_1\oplus\cC_g=\Vect_\Gamma^Q\oplus\Vect.
\end{equation*}
In general, this fusion category does not admit a braiding, but with the $G$-action above, there exist $G$-crossed braidings, which were classified in \cite{Gal22}. This produces two explicit $\Z_2$-crossed extensions $\cC=\smash{\Vect_\Gamma^Q[\Z_2,\eps]}$, $\eps\in\{\pm1\}$, of $\cB=\smash{\Vect_\Gamma^Q}$, see \autoref{cor_TY}. Finding precisely two $G$-crossed extensions of the given $\cB$ with the given categorical action of $G$ on $\cB$ matches the general classification result for braided $G$-crossed extensions in \cite{ENO10,DN21}; see \autoref{thm_ourENO_braided}.

One is especially interested in cases where the $\Z_2$-crossed braiding, and thus in particular the given braiding on $\smash{\Vect_\Gamma}$, i.e.\ the quadratic form $Q$, is nondegenerate. In this case, the $\Z_2$-equivariantisation $\cC\sslash\Z_2$ is a modular tensor category (see \autoref{prop_TY_modular}), which was described in \cite{GNN09}. The definition of the Tambara-Yamagami category requires that on $\smash{\Vect_\Gamma^Q}$ the associator is trivial and that the quadratic form $Q$ admits a square root, i.e.\ a quadratic form $q$ with $Q=q^2$, which appears in the braiding and associator involving the $g$-graded component. In the nondegenerate case, this necessarily means that the group $\Gamma$ has \emph{odd order} or, equivalently, that $G$ acts on $\Gamma$ with the only fixed point $0$.

\medskip

In this article, we generalise the results in \cite{TY98,Gal22} by constructing braided $\Z_2$-crossed extensions $\cC$ of $\cB=\smash{\Vect_\Gamma^Q}$ with the quadratic form $Q$ nondegenerate and the finite, abelian group $\Gamma$ of \emph{arbitrary order} where $G\cong\Z_2$ still acts (as a certain categorical action, cf.\ \autoref{rem_minusoneaction}) by multiplication with $-1$ on $\Gamma$.
\begin{introTheorem}[\ref{thm_evenTY}]
For $\eps\in\{\pm1\}$, the data given in \autoref{sec_evenTY} define a $\Z_2$-crossed ribbon fusion category
\begin{equation*}
\cC=\Vect_\Gamma^Q[\Z_2,\eps]=\Vect_\Gamma^Q\oplus\Vect_{\Gamma/2\Gamma},
\end{equation*}
which is a braided $\Z_2$-crossed extension of $\smash{\Vect_\Gamma^Q}$ for a discriminant form $(\Gamma,Q)$ with the above categorical $\Z_2$-action. It is equipped with a natural choice of $\Z_2$-ribbon structure for which all quantum dimensions are positive.
\end{introTheorem}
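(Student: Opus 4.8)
The structural data in \autoref{sec_evenTY}---the associator $\alpha$, the $\Z_2$-crossed braiding $c$, the tensor structure on the action functor $g_*$, and the twist $\theta$---are the rigorous counterpart of the heuristic condensation from the infinite category $\Vect_{\R^d}\oplus\Vect$ alluded to in the introduction; since that infinite category is not itself defined, the plan is to prove \autoref{thm_evenTY} directly, by checking that the given constants satisfy the defining axioms of a braided $\Z_2$-crossed ribbon fusion category. Throughout, each categorical coherence diagram reduces to a scalar identity among the structure constants, and I would split every such identity into cases according to how the objects are distributed across the two graded components $\cC_1=\Vect_\Gamma^Q$ and $\cC_g=\Vect_{\Gamma/2\Gamma}$.

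First I would check that the data define a fusion category. Semisimplicity, finiteness, rigidity and simplicity of the unit are immediate from the explicit fusion rules, and the quantum dimensions of the simple objects of the nontrivial sector come out equal to $\sqrt{|\Gamma|/|\Gamma/2\Gamma|}$, so that each graded component has the same total dimension $|\Gamma|$ (recovering the single object of dimension $\sqrt{|\Gamma|}$ when $\Gamma$ has odd order). The substantive step here is the pentagon axiom for $\alpha$. The all-$\cC_1$ case holds since the restriction to degree $1$ is the pointed braided category $\Vect_\Gamma^Q$; the cases with one noninvertible factor reduce to bicharacter identities; and the cases with two or more factors in $\cC_g$ are the analogues of the classical Tambara-Yamagami pentagon constraint, collapsing to a Gauss-sum normalisation over $\Gamma/2\Gamma$ balanced against the square-root dimension factors.

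Next I would verify that $g_*$, acting by $-\!\id$ with the tensor structure specified in \autoref{sec_evenTY}, is a braided tensor autoequivalence and that $g\mapsto g_*$ is a coherent categorical $\Z_2$-action---in particular that the chosen trivialisation of $g_*^2\cong\id$ satisfies the required compatibility (cf.\ \autoref{rem_minusoneaction}), and that restricting all data to degree $1$ recovers $\Vect_\Gamma^Q$ with its given braiding. With the action in hand, the heart of the argument is the pair of $\Z_2$-crossed hexagon axioms for $c$. Splitting again by sector, the hexagons among invertible objects recover the braiding of $\Vect_\Gamma^Q$, while those involving the noninvertible objects collapse to identities relating the crossed-braiding constants to the quadratic form $Q$ and its chosen refinement. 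I expect this to be the main obstacle: because $\Gamma$ may have even order one cannot choose a global square root $q$ of $Q$ as in the odd-order Tambara-Yamagami case, so the required ``half'' of $Q$ must be encoded through the $2$-torsion, and controlling these $2$-torsion contributions so that both hexagons close while keeping $Q$ nondegenerate is the crux of the construction.

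Finally I would install the ribbon structure, defining $\theta$ on $\cC_1$ via $Q$ and on $\cC_g$ by the natural choice of \autoref{sec_evenTY}, and verify the $\Z_2$-crossed balancing axiom relating $\theta$, $c$ and the action, together with compatibility with duals. Positivity of all quantum dimensions then follows from the corresponding spherical structure, for which $\qdim(X_{\bar a})=+\sqrt{|\Gamma|/|\Gamma/2\Gamma|}>0$. As a consistency check confirming that the construction is not redundant, the two categories obtained for $\eps=\pm1$ should be inequivalent---matching the count of exactly two braided $\Z_2$-crossed extensions predicted by \autoref{thm_ourENO_braided}---which I would confirm by computing an invariant such as the twists on $\cC_g$ or the central charge of the equivariantisation.
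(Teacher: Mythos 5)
Your proposal follows essentially the same route as the paper: \autoref{sec_app} proves \autoref{thm_evenTY} exactly by reducing every pentagon, action-coherence, hexagon and ribbon diagram to a scalar identity, split by how the factors are distributed over $\cC_1$ and $\cC_g$, with the $\X\X\X\X$-pentagon resting on a character sum over a $2\Gamma$-coset and the $\X\X\X$-hexagons on the partial Gauss sum defining $\alpha$, and finally computing $\dim(\X^{\bar x})=\eps(\alpha\beta)|2\Gamma|^{1/2}$. One small correction: the paper \emph{does} fix a pointwise square root $q$ of $Q$ (\autoref{prop_ass_q}); the even-order subtlety is not its nonexistence but that its additivity defect is only $\sigma(a,b)\,\omega(a+b,a,b)\,\omega(\delta,a,b)$, and it is precisely this relation (together with $q(a)^2=\sigma(a,a)$) that makes the hexagons close.
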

The definition of the $\Z_2$-crossed braiding on $\cC$ again depends on a (special) choice of square root $q$ of the quadratic form $Q$ on $\Gamma$ (see \autoref{prop_ass_q}).

By the classification of braided $G$-crossed extensions \cite{ENO10,DN21}, for each $\eps$, there is a unique braided $\Z_2$-crossed extension $\smash{\Vect_\Gamma^Q[\Z_2,\eps]}$ of $\smash{\Vect_\Gamma^Q}$ with the given $\Z_2$-action. In \autoref{prob_ENO} we ask if one can verify this directly from our construction?

We also compute the $\Z_2$-equivariantisation $\cC\sslash\Z_2$, which is again a modular tensor category (see \autoref{prop_equimod}). In particular, we determine the simple objects, fusion rules and modular data of $\cC\sslash\Z_2$ (see \autoref{prop_eqivsimple}, \autoref{prop_equi_fusion}, \autoref{prop_equivmodular}).

\medskip

The input data for the construction of the above braided $\Z_2$-crossed categories are taken from \cite{GLM24a}, where they are produced by using the idea that braided $G$-crossed extensions commute in a certain sense with condensations by commutative, associative algebra objects. Concretely, this idea is applied to the situation
\begin{equation*}
\begin{tikzcd}
\Vect_{\R^d}^\DiscQ
\arrow[rightsquigarrow]{d}{\text{cond.}}
\arrow[hookrightarrow]{rrr}{\text{$\Z_2$-crossed ext.}}
&&&
\Vect_{\R^d}^\DiscQ\oplus\Vect
\arrow[rightsquigarrow]{d}{\text{cond.}}
\\
\Vect_\Gamma^Q
\arrow[hookrightarrow]{rrr}{\text{$\Z_2$-crossed ext.}}
&&&
\Vect^Q_\Gamma[\Z_2,\eps]
\end{tikzcd}
\end{equation*}
where $(\Gamma,Q)$ is realised as an isotropic subgroup (meaning an even lattice~$L$) of the quadratic space $\smash{(\R^d,\DiscQ)}$. While the infinite Tambara-Yamagami category $\smash{\Vect_{\R^d}^\DiscQ\oplus\Vect}$ is not fully rigorous (but see \cite{Mar25}, which appeared after the completion of this manuscript, for a step in this direction), we can still use the data obtained in \cite{GLM24a} as the data that \emph{should} define $\smash{\Vect_\Gamma^Q}[\Z_2,\eps]$ and then verify the coherence conditions explicitly (see \autoref{sec_app}). This yields the rigorous (but without \cite{GLM24a} ad hoc seeming) definition of the braided $\Z_2$-crossed extension $\smash{\Vect_\Gamma^Q}[\Z_2,\eps]$ of $\smash{\Vect_\Gamma^Q}$ given in this paper.


\subsection*{Outline}

In \autoref{sec_prel}, we briefly recall discriminant forms and lattices.

In \autoref{sec_ENO}, we recall braided $G$-crossed tensor categories. We also introduce $G$-ribbon structures and discuss pseudo-unitarity in that context.

As a first example, in \autoref{sec_TY}, we describe Tambara-Yamagami categories as braided $\Z_2$-crossed extensions of pointed braided fusion categories.

Then, in \autoref{sec_evenTY}, we define the generalisations of the Tambara-Yamagami categories where the abelian group $\Gamma$ describing the underlying pointed braided fusion category may be of even order. The necessary input data are defined in \autoref{sec_specialcoc}. In \autoref{sec_equiv}, we describe the equivariantisations. In \autoref{sec_latticedata} we describe these categories explicitly in terms of lattice data. The proof of the main theorem, \autoref{thm_evenTY}, is given in \autoref{sec_app}.


\subsection*{Notation}

Unless stated otherwise, all vector spaces will be over the base field $\C$. Categories will be enriched over $\Vect=\smash{\Vect_\C}$, but we could equally well take any algebraically closed field of characteristic zero. By $\Z_n$ we shall always mean the cyclic group $\Z/n\Z$. We write $\e(x)=e^{2\pi\i x}$.

Throughout the text, we denote by $G$ a (usually finite) group, written multiplicatively,
with $g$ denoting an arbitrary element of $G$. However, in the special case of $G=\langle g\rangle\cong\Z_2$, we denote by $g$ the nontrivial element of $G$. By contrast, $\Gamma$ always denotes an abelian group, which we write additively.
Quadratic forms on $\Gamma$ are usually written multiplicatively with values in $\C^\times$; they are related to the usual notion of quadratic forms with values in $\Q/\Z$ or $\R/\Z$ via $\e(\cdot)$.


\subsection*{Acknowledgements}

We are happy to thank Yoshiki Fukusumi, Terry Gannon, Hannes Knötzele, Adrià Marín~Salvador, Christian Reiher, Andrew Riesen, Ingo Runkel and Christoph Schweigert for helpful discussions and comments. We thank the referee for their helpful suggestions and corrections.

César Galindo would like to express his gratitude for the hospitality and excellent working conditions provided by the Department of Mathematics at the University of Hamburg, where part of this research was conducted as a Fellow of the Humboldt Foundation. Simon Lentner would like to express his gratitude for the hospitality of Thomas Creutzig and Terry Gannon at the University of Alberta.

César Galindo was partially supported by Grant INV-2025-213-3452 from the School of Science of Universidad de los Andes. Simon Lentner was supported by the Humboldt Foundation. Sven Möller acknowledges support from the DFG through the Emmy Noether Programme and the CRC 1624 \emph{Higher Structures, Moduli Spaces and Integrability}, project numbers 460925688 and 506632645. Sven Möller was also supported by a Postdoctoral Fellowship for Research in Japan and Grant-in-Aid KAKENHI 20F40018 by the \emph{Japan Society for the Promotion of Science}.


\section{Lattices and Discriminant Forms}\label{sec_prel}

By a (rational) \emph{lattice}, we mean a free abelian group $L$ of finite rank equipped with a nondegenerate bilinear form $\langle\cdot,\cdot\rangle\colon L\times L\to\Q$. The lattice $L$ is called \emph{integral} if $\langle v,w\rangle\in\Z$ for all $v,w\in L$. If $\langle v,v\rangle\in 2\Z$ for all $v\in L$, then $L$ is called \emph{even}. An integral lattice that is not even is called \emph{odd}. Given a lattice $L$, we denote by $L^*=\{v\in L\otimes_\Z\R\mid\langle v,w\rangle\in\Z\text{ for all }w\in L\}$ the \emph{dual} of $L$, embedded via $\langle\cdot,\cdot\rangle$ into the ambient space $L\otimes_\Z\R$ of $L$.

If $L$ is even, then $L\subset L^*$ and the quotient $L^*\!/L$ is a finite, abelian group endowed with a nondegenerate quadratic form $Q\colon L^*\!/L\to\C^\times$ given by $Q(v+L)=\e(\langle v,v\rangle/2)$ for all $v\in L^*$. In that situation, we call $L^*\!/L$ the \emph{discriminant form} of $L$.

More generally, we call any pair $(\Gamma,Q)$ of a finite, abelian group $\Gamma$ together with a nondegenerate quadratic form $Q\colon\Gamma\to\C^\times$ a \emph{discriminant form} (sometimes called \emph{metric group}). The function $Q$ being quadratic means that $\smash{Q(na)=Q(a)^{n^2}}$ for all $a\in\Gamma$ and $n\in\Z$ and that $B_Q\colon\Gamma\times\Gamma \to\C^\times$ with $B_Q(a,b)\coloneqq Q(a+b)Q(a)^{-1}Q(b)^{-1}$ for $a,b\in\Gamma$ is bimultiplicative (and nondegenerate). We call $B_Q$ the \emph{associated bimultiplicative form}. Any discriminant form $(\Gamma,Q)$ can be realised as dual quotient $L^*\!/L$ for some even lattice~$L$ \cite{Nik80}.

Given two discriminant forms $(\Gamma_1,Q_1)$ and $(\Gamma_2,Q_2)$, we denote their (orthogonal) direct sum by $(\Gamma_1\oplus\Gamma_2,Q_1\oplus Q_2)$, which is the group $\Gamma_1\oplus\Gamma_2$ with the quadratic form $(Q_1\oplus Q_2)((a_1,a_2))=Q_1(a_1)Q_2(a_2)$ for $a_1\in\Gamma_1$, $a_2\in\Gamma_2$.

A discriminant form is called \emph{indecomposable} if it cannot be written as an orthogonal direct sum of two nonzero discriminant forms. Such discriminant forms are also referred to as indecomposable Jordan components. \autoref{tab_disc} describes the indecomposable Jordan blocks (see, e.g., \cite{CS99,Sch09}). Here, $k\in\Ns$, $p$ is an odd prime and $\big(\frac{\cdot}{\cdot}\big)$ denotes the Kronecker symbol.

\begin{table}[ht]
\caption{Indecomposable Discriminant Forms}
\label{tab_disc}
\begin{tabular}{@{}l l p{6.2cm}@{}}
Symbol & Group & Quadratic Form \\
\toprule
$(p^k)^{\pm1}$ & $\Z_{p^k} = \langle x \rangle$ & $Q(x) = \e(p^{-k} u)$ for some $u \in \Z$ with $(u, p) = 1$ and $\bigl( \frac{2u}{p} \bigl) = \pm 1$. \\
\cmidrule{1-3}
$(2^k)_t^{\pm1}$ & $ \Z_{2^k} = \langle x \rangle $ & $Q(x) = \e(2^{-k-1} t)$ for some $t \in \Z_8$ with $\bigl( \frac{t}{2} \bigl) = \pm 1$ \\
\cmidrule{1-3}
$(2^k)^{+2}_\II$ & $\Z_{2^k} \times \Z_{2^k} = \langle x, y \rangle$ & $Q(x) = Q(y) = 1$ and $B_Q(x,y) = \e(2^{-k})$ \\
\cmidrule{1-3}
$(2^k)^{-2}_\II$ & $\Z_{2^k} \times \Z_{2^k} = \langle x, y \rangle$ & $Q(x) = Q(y) = B_Q(x,y) = \e(2^{-k})$
\end{tabular}
\end{table}


\section{Braided \texorpdfstring{$G$}{G}-Crossed Tensor Categories}\label{sec_ENO}

In this section, we introduce the main categorical notions relevant for this text, in particular braided $G$-crossed tensor categories and braided $G$-crossed extensions. We also state the results from \cite{ENO10,DN21} on the classification of the latter. Then, we describe $G$-ribbon structures, pseudo-unitarity and their relation to the $G$-equivariantisation.


\subsection{Tensor Categories and \texorpdfstring{$G$}{G}-Actions}

In this text, tensor categories are $\C$-linear abelian tensor categories, similar to \cite{DGNO10}. Moreover, we consider braided tensor categories, rigid tensor categories and ribbon categories. A fusion category is a tensor category that is finite, semisimple, rigid and has a simple tensor unit $\unit$, as in \cite{EGNO15}. A modular tensor category is a ribbon fusion category with nondegenerate braiding.

A rigid tensor category is called \emph{pointed} if every simple object is invertible (or a simple current). In the following, we consider the typical description of pointed (braided) fusion categories associated with finite (abelian) groups and some cohomological data (see, e.g., \cite{JS93,EGNO15}).

\begin{ex}[Pointed Fusion Categories]\label{ex_tensorVect}
Suppose that $G$ is a finite group and $\omega\in Z^3(G,\C^\times)$ a $3$-cocycle. Consider the pointed fusion category $\smash{\Vect_G^\omega}$ of $G$-graded vector spaces, where the simple objects are $\C_g$, $g\in G$, and the tensor product is given by $\C_g\otimes\C_h=\C_{gh}$ with associator
\begin{equation*}
(\C_g\otimes\C_h)\otimes\C_k\overset{\omega(g,h,k)}{\longrightarrow}\C_g\otimes(\C_h\otimes\C_k)
\end{equation*}
for all $g,h,k\in G$. The pentagon identity for the associator holds precisely because $\omega$ is a $3$-cocycle.
\end{ex}

\begin{ex}[Pointed Braided Fusion Categories]\label{ex_braidedVect}
Let $\Gamma$ be a finite, abelian group and $(\sigma,\omega)\in Z_{\mathrm{ab}}^3(\Gamma,\C^\times)$ an abelian $3$-cocycle on it \cite{EM50}. We denote by $\smash{\Vect_\Gamma^{\sigma,\omega}}$ the pointed braided fusion category given by the above fusion category $\smash{\Vect_\Gamma^\omega}$ equipped with the braiding
\begin{equation*}
\C_a\otimes\C_b\overset{\sigma(a,b)}{\longrightarrow}\C_b\otimes\C_a
\end{equation*}
for $a,b\in\Gamma$. The hexagon identity for the braiding corresponds exactly to the defining property of an abelian $3$-cocycle, i.e.
\begin{equation}\label{eq_abelian cocycle equations}
\frac{\omega(b,a,c)}{\omega(a,b,c)\omega(b,c,a)}=\frac{\sigma(a,b+c)}{\sigma(a, b)\sigma(a,c)},\quad\frac{\omega(a,b,c)\omega(c,a,b)}{\omega(a,c,b)}=\frac{\sigma(a+b,c)}{\sigma(a,c)\sigma(b,c)}
\end{equation}
for $a,b,c\in\Gamma$, in addition to the $3$-cocycle property of $\omega$. If $\omega=1$, \eqref{eq_abelian cocycle equations} states that $\sigma$ is bimultiplicative; else, $\omega$ measures its deviation from bimultiplicativity.

Abelian $3$-coboundaries are of the form $(\sigma_\kappa,\d\kappa)$ with $\sigma_\kappa(a,b)\coloneqq\kappa(a,b)\kappa(b,a)^{-1}$ for any function $\kappa\colon\Gamma\times\Gamma\to\C^\times$, and they correspond to braided tensor equivalences $\smash{\Vect_\Gamma^{\sigma,\omega}}\cong\smash{\Vect_\Gamma^{\sigma\sigma_\kappa,\omega\d\kappa}}$ that map each object $\C_a$ to itself but with a possibly nontrivial tensor structure given by $\kappa$. Hence, cohomology classes of abelian $3$-cocycles on $\Gamma$ correspond to equivalence classes of braided tensor categories on the abelian category $\smash{\Vect_\Gamma}$ with the given tensor product.

The abelian $3$-cocycles $(\sigma,\omega)$, up to coboundaries, correspond bijectively to quadratic forms $Q\colon\Gamma\to\C^\times$, as shown in \cite{EM50}. Recall that $B_Q\colon\Gamma\times\Gamma\to\C^\times$ with $B_Q(a,b)=Q(a+b)Q(a)^{-1}Q(b)^{-1}$ for $a,b\in\Gamma$ denotes the associated bimultiplicative form. In this correspondence, $B_Q(a,b)=\sigma(a,b)\sigma(b,a)$ is the double-braiding and $Q(a)=\sigma(a,a)$ the self-braiding. If the quadratic form $Q$ is nondegenerate, meaning that $B_Q$ is, then the braiding is nondegenerate. We denote by $\smash{\Vect_\Gamma^Q}$ the equivalence class of braided tensor categories corresponding to a cohomology class of abelian $3$-cocycles defined by the quadratic form $Q$ on $\Gamma$.

There is a rigid structure $\C_a^*=\C_{-a}$, $a\in\Gamma$, with the obvious evaluation and coevaluation. There is also a canonical choice of a ribbon structure $\theta_{\C_a}=Q(a)$. As we shall discuss in \autoref{sec_GRibbon} and \autoref{sec_pseudo}, this is the unique choice of ribbon structure for which all simple objects $\C_a$ have quantum dimension~$1$, thus coinciding with the Frobenius-Perron dimension. This is known as the pseudo-unitary choice. All possible ribbon structures differ from this by a homomorphism $\Gamma\to\{\pm1\}$.

If $|\Gamma|$ is odd, then the cohomology class associated with the quadratic form $Q$ on $\Gamma$ can be represented by a distinguished abelian $3$-cocycle $(\sigma,\omega)$ with $\omega=1$ and $\sigma(a,b)=\smash{B_Q^{1/2}(a,b)}$ for $a,b\in\Gamma$. Here, we recall that when the abelian group $\Gamma$ has odd order, every bimultiplicative map $B\colon\Gamma\times\Gamma\to\C^\times$ has a unique bimultiplicative square root $\smash{B^{1/2}}$; an analogous statements holds for quadratic forms.
\end{ex}

\medskip

We describe group actions on tensor categories:
\begin{defi}\label{def_Gcrossed}
Let $\cC$ be a tensor category with the associativity constraint given by $(X\otimes Y)\otimes Z\overset{\alpha}{\to}X\otimes (Y\otimes Z)$ and $G$ a finite group. Then a \emph{$G$-action} on $\cC$ consists of the following data:
\begin{enumerate}[label=(\roman*)]
\item For every element $g\in G$, a functor
\begin{equation*}
g_*\colon\cC\to\cC
\end{equation*}
satisfying $g_*(\unit)=\unit$. We denote the image of an object $X\in\cC$ by $g_*(X)$ and that of a morphism $f$ by $g_*(f)$.
\item For every pair of elements $g,h\in G$, a natural isomorphism
\begin{equation*}
T_2^{g,h}(X)\colon(gh)_*(X)\to g_*(h_*(X))
\end{equation*}
such that associativity holds: for all $g,h,l\in G$,
\begin{equation*}
T_2^{g,h}(l_*(X))\circ T_2^{gh,l}(X)=g_*(T_2^{h,l}(X))\circ T_2^{g,hl}.
\end{equation*}
\item The $g_*$ are tensor functors with tensor structure $\tau^g$, i.e.\ for every $g\in G$ and every pair of objects $X,Y\in\cC$, a natural isomorphism
\begin{equation*}
\tau^g_{X,Y}\colon g_*(X\otimes Y)\to g_*(X)\otimes g_*(Y)
\end{equation*}
such that the diagrams
\begin{equation*}
\begin{tikzcd}[column sep=small]
g_*((X \!\otimes\! Y) \!\otimes\! Z) \ar[r, "\tau^g"] \ar[d, "g_*(\alpha)"'] & g_*(X \!\otimes\! Y) \!\otimes\! g_*(Z) \ar[r, "\tau^g"] & (g_*(X) \!\otimes\! g_*(Y)) \!\otimes\! g_*(Z) \ar[d, "\alpha"] \\
g_*(X \!\otimes\! (Y \!\otimes\! Z)) \ar[r, "\tau^g"'] & g_*(X) \!\otimes\! g_*(Y \!\otimes\! Z) \ar[r, "\tau^g"'] & g_*(X) \!\otimes\! (g_*(Y) \!\otimes\! g_*(Z))
\end{tikzcd}
\end{equation*}
and
\begin{equation*}
\begin{tikzcd}
(gh)_*(X \otimes Y) \ar[r, "\tau^{gh}"] \ar[d, "T_2^{g,h}(X \otimes Y)"'] & (gh)_*(X) \otimes (gh)_*(Y) \ar[d, "T_2^{g,h}(X) \otimes T_2^{g,h}(Y)"] \\
g_*(h_*(X \otimes Y)) \ar[r, "\tau^g \tau^h"'] & g_*(h_*(X)) \otimes g_*(h_*(Y))
\end{tikzcd}
\end{equation*}
commute for all $g,h\in G$.
\end{enumerate}
A $G$-action is called \emph{strict} if all the tensor functors $(g_*, \tau^g)$ are strict and the natural transformations $T_2$ are equalities.
\end{defi}

\begin{rem}
The data of an action of $G$ on $\cC$ are equivalent to a monoidal functor from the discrete tensor category $\smash{\underline{G}}$, where the set $G$ serves as objects and the tensor product is defined by the multiplication in $G$, to the tensor category of tensor functors $\smash{\underline{\Aut}_\otimes(\cC)}$ along with tensor natural isomorphisms.

In our definition, we have assumed that the tensor functors $(g_*,\tau^g)$ are unital, i.e.\ that $g_*(\unit)=\unit$, and strong, i.e.\ that $\tau^g$ is a natural isomorphism. More generally, there would be an isomorphism $T_{0}^g\colon g_*(\unit)\to\unit$ satisfying a coherence condition. However, since every strong tensor functor is isomorphic to a unital one, we shall always assume this property and have therefore included it in our definition.
\end{rem}

\smallskip

Given a $G$-action on a tensor category, we can define the following category, whose objects arise from $G$-invariant objects of the original tensor category.
\begin{defi}
Let $\cC$ be a tensor category and $G$ a finite group acting on $\cC$. The \emph{$G$-equivariantisation} of $\cC$, denoted by $\cC\sslash G$, is the tensor category with objects the pairs $(X,\eqi)$ where $X\in\cC$ and $\eqi$ is a family of isomorphisms, $\eqi_g\colon g_*(X) \to X$ for each $g \in G$, satisfying
\begin{equation*}
\eqi_{gh}=\eqi_g\circ g_*(\eqi_h)\circ T_2^{g,h}
\end{equation*}
for $g,h\in G$. Morphisms $f\colon(X,\eqi)\to(Y,\eqi')$ in $\cC\sslash G$ satisfy
\begin{equation*}
\eqi'_g\circ g_*(f)=f\circ\eqi_g,
\end{equation*}
the tensor product is $(X,\eqi)\otimes(Y,\eqi')=(X\otimes Y,\eqi'')$ with
\begin{equation*}
\eqi''_g=\eqi_g\otimes\eqi'_g\circ\tau^g_{X,Y}
\end{equation*}
for $X,Y\in\cC$ and $g\in G$, and the tensor unit is $(\unit,\id_\unit)$.
\end{defi}

We describe the simple objects of $\cC\sslash G$. The group $G$ acts on the set $\Irr(\cC)$ of equivalence classes of simple objects. Fixing a representative $X_i$ for every orbit and denoting the corresponding stabiliser subgroup by $G_i$, we can choose isomorphisms $t_g\colon g_*(X_i)\to X_i$. The action restricted to $G_i$ defines a $2$-cocycle $\chi_i\in Z^2(G_i,\C^\times)$,
\begin{equation}\label{eq_equivariant}
\chi_i(g,h)\id_{X_i}=t_{gh}^{-1}\circ t_g\circ g_*(t_h)\circ T_2^{g,h}
\end{equation}
for all $g,h\in G_i$. The cohomology class of $\chi_i$ does not depend on the choice of $\{t_g\mid g\in G_i\}$. The simple objects of $\cC\sslash G$, up to isomorphism, are in bijection with isomorphism classes of irreducible $\chi_i$-projective representations of $G_i$ for every $i$. For more details on the correspondence, see \cite{BN13}.

\begin{ex}\label{ex_classification equivariant simple order 2}
Let $G=\langle g\rangle\cong\Z_2$ be a cyclic group of order~$2$ acting on a tensor category~$\cC$. In this case, the classification of simple objects of $\cC\sslash G$ is straightforward. If a simple object $X$ has a trivial stabiliser, then $X^g\coloneqq X\oplus g_*(X)$ is an equivariant object with $\eqi_g=\smash{\id_{g_*(X)}\oplus T_2^{g,g}(X)}$. If the stabiliser is $G$, then take $t_g\colon g_*(X)\to X$ and, using the equation~\eqref{eq_equivariant}, define $\gamma=\chi(g,g)^{-1/2}$. The isomorphisms $\smash{\eqi_g^{(\pm)}}=\pm\gamma t_g\colon g_*(X)\to X$ endow $X$ with two nonisomorphic equivariant structures.
\end{ex}


\subsection{Braided \texorpdfstring{$G$}{G}-Crossed Tensor Categories}\label{sec_Gcrossed}

We now endow the categories from the previous section with the additional structure of a braiding. The so obtained braided $G$-crossed tensor categories (see, e.g., \cite{Tur00,EGNO15}) are the central objects in this text.

For a tensor category $\cC$ and a group $G$, a (faithful) $G$-grading on $\cC$ is a decomposition $\cC=\bigoplus_{g\in G}\cC_g$ such that the tensor product $\otimes$ maps $\cC_g\times\cC_h$ to $\cC_{gh}$, the unit object $\unit$ is in $\cC_1$ and $\cC_g\neq0$ for all $g\in G$.

\begin{defi}\label{def_G-crossed}
A \emph{braided $G$-crossed tensor category} is a tensor category $\cC$ endowed with the following structures:
\begin{enumerate}[label=(\roman*)]
\item an action of $G$ on $\cC$,
\item a faithful $G$-grading $\cC=\oplus_{g\in G}\cC_g$,
\item isomorphisms, called the \emph{$G$-braiding},
\begin{equation*}
c_{X,Y}\colon X\otimes Y\to g_*(Y)\otimes X
\end{equation*}
for $g\in G$, $X\in\cC_g$, $Y\in\cC$, natural in $X$ and $Y$.
\end{enumerate}
These structures are subject to the following conditions, where we omit the associativity constraints in the diagrams for better readability:
\begin{enumerate}[label=(\alph*)]
\item $g_*(\cC_h)\subset\cC_{ghg^{-1}}$ for all $g, h \in G$.
\item The diagrams
\begin{equation*}
\begin{tikzcd}
g_*(X\otimes Y) \ar{dd}{\tau^g_{X,Y}} \ar{rr}{g_*(c_{X,Y})} && g_*(h_*(Y)\otimes X)\ar{dd}{T_2^{ghg^{-1},h}(T_2^{g,h})^{-1}\tau^g_{h_*Y,X}}\\\\
g_*(X)\otimes g_*(Y) \ar{rr}{c_{g_*(X),g_*(Y)}} && (ghg^{-1})_*g_*(Y)\otimes g_*(X)
\end{tikzcd}
\end{equation*}
commute for all $g,h\in G$, $X\in\cC_h$, $Y\in\cC$.
\item The hexagon diagrams
\begin{equation*}
\begin{tikzcd}
X \otimes Y\otimes Z \ar{rr}{c_{X,Y\otimes Z}} \ar{d}{c_{X,Y}\otimes\id_{Z}} && g_*(Y\otimes Z)\otimes X \ar{d}{\tau^g_{Y,Z}} \\
g_*(Y)\otimes X\otimes Z \ar{rr}{\id_{g_*(Y)}\otimes c_{X,Z}}&& g_*(Y)\otimes g_*(Z) \otimes X
\end{tikzcd}
\end{equation*}
commute for all $g\in G$, $X\in\cC_g$, $Y,Z\in\cC$ and the inverse hexagon diagrams
\begin{equation*}
\begin{tikzcd}
X \otimes Y\otimes Z \ar{rr}{c_{X\otimes Y, Z}} \ar{d}{\id_X\otimes c_{Y,Z}} && (gh)_*(Z)\otimes X\otimes Y \ar{d}{T_2^{g,h}} \\
X\otimes h_*(Z)\otimes Y\ar{rr}{c_{X,h_*(Z)}\otimes \id_Y}&& g_*h_*(Z)\otimes X \otimes Y
\end{tikzcd}
\end{equation*}
commute for all $g,h\in G$, $X\in\cC_g$, $Y\in\cC_h$, $Z\in\cC$.
\end{enumerate}
\end{defi}
The definition of equivalence of braided $G$-crossed tensor categories is given in \cite{G17}, Section~5.2.

We call $\cC$ as in the definition a \emph{braided $G$-crossed extension} of a braided tensor category $\cB$ if $\cC_1=\cB$.

\medskip

An important application of braided $G$-crossed tensor categories is the following, related to the equivariantisation, which can be found in the present context in \cite{Tur10b} (appendix by Michael Müger):
\begin{prop}[\cite{DGNO10}]
Let $\cC$ be a braided $G$-crossed tensor category. The equivariantisation $\cC \sslash G$ is a braided tensor category that contains a fusion subcategory braided equivalent to the symmetric category $\Rep(G)$. This corresponds to those equivariant objects $(V,(\eqi_g)_{g\in G})$ where $V$ is in the tensor subcategory generated by the unit object of $\cC$.

Conversely, for every braided tensor category $\cD$ containing the symmetric fusion category $\Rep(G)$ with trivial braiding, we can define a braided $G$-crossed tensor category $\cC\coloneqq\cD_G$ as the \emph{de-equivariantisation} by $G$. This is the tensor category of modules over $\smash{\C^G}$, the algebra of functions on $G$, which is an algebra object in $\Rep(G)$ and thus in $\cD$.
\end{prop}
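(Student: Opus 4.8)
The plan is to establish the two directions separately and then argue that they are mutually inverse. For the forward (equivariantisation) direction I first put a braiding on $\cC\sslash G$. Every equivariant object has an underlying $G$-graded object $X=\bigoplus_{g\in G}X_g$ with $X_g\in\cC_g$, so for $(X,\varphi)$ and $(Y,\psi)$ I would define the braiding summandwise by
\[
\tilde c_{(X,\varphi),(Y,\psi)}\big|_{X_g}=(\psi_g\otimes\id_{X_g})\circ c_{X_g,Y},
\]
composing the $G$-crossed braiding $c_{X_g,Y}\colon X_g\otimes Y\to g_*(Y)\otimes X_g$ with the equivariant structure $\psi_g\colon g_*(Y)\to Y$ to undo the $g$-twist. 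I would then check that $\tilde c$ is a morphism in $\cC\sslash G$ (compatibility with the $\varphi,\psi$, using naturality of $c$ and axiom (b)) and that it satisfies the two hexagon identities; after substituting the definition, these should reduce to the two coherence diagrams of part (c) of the definition of a braided $G$-crossed category, together with the cocycle relations for $T_2$ and $\tau$.

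Next I locate $\Rep(G)$ inside $\cC\sslash G$. An object supported on the tensor subcategory generated by $\unit$ is of the form $\unit\otimes W$ with $W\in\Vect$; since $g_*\unit\cong\unit$ canonically, an equivariant structure on $\unit\otimes W$ is the same as a family $\rho_g\in\GL(W)$, and the cocycle identity $\varphi_{gh}=\varphi_g\circ g_*(\varphi_h)\circ T_2^{g,h}$ becomes exactly the condition that $g\mapsto\rho_g$ is a representation of $G$. This produces a fully faithful braided tensor functor $\Rep(G)\to\cC\sslash G$. Because the underlying objects sit in degree $1$, the crossed braiding between two such objects is the braiding of $\cB=\cC_1$ on copies of $\unit$, i.e.\ the plain flip, so $\tilde c$ restricts to the standard symmetric braiding and the image is a symmetric subcategory braided-equivalent to $\Rep(G)$.

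For the reverse direction I would read the hypothesis ``$\Rep(G)\subset\cD$ with trivial braiding'' as saying that $\Rep(G)$ embeds as a Tannakian subcategory (the standard symmetric braiding, equivalently admitting a fibre functor). Then $A=\C^G$, the functions on $G$ with the translation $G$-action and pointwise multiplication, is a connected commutative \'etale algebra in $\Rep(G)$, hence in $\cD$, and I set $\cC\coloneqq\cD_G=\Mod_\cD(A)$. The $G$-grading comes from the decomposition $A=\bigoplus_{g\in G}\C\delta_g$ into minimal idempotents, which splits every $A$-module and assigns degrees; the $G$-action is induced by the translation action on $A$ permuting the $\delta_g$. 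The $G$-crossed braiding is descended from the braiding of $\cD$: the key computation is that for an $A$-module $M$ of degree $g$ the full double braiding of $A$ past $M$ acts on $A$ by translation by $g$, so that $c^\cD_{M,N}$ transports the $A$-action in a $g$-twisted way and descends to an isomorphism $M\otimes_A N\to g_*(N)\otimes_A M$.

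The main obstacle, and the genuine content, is to verify that this descended map is a well-defined $G$-crossed braiding satisfying all the axioms, and above all that the two constructions are mutually inverse, $(\cC\sslash G)_G\simeq\cC$ and $(\cD_G)\sslash G\simeq\cD$. I would organise the latter around the induction/restriction adjunction between $\cD$ and $\Mod_\cD(A)$: since $A$ is connected \'etale with $\Mod_{\Rep(G)}(A)\simeq\Vect$, free-module induction along $A$ and its forgetful right adjoint assemble into the equivariantisation/de-equivariantisation pair, and the delicate point is that this equivalence intertwines the braiding of $\cD$ with the crossed braiding on $\Mod_\cD(A)$ — precisely the place where the monodromy identity above is used once more.
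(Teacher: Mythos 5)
This proposition is quoted from the literature: the paper gives no proof of its own, citing \cite{DGNO10} and pointing to M\"uger's appendix in \cite{Tur10b}, so there is no in-paper argument to compare against line by line. Your outline follows the standard route from those references, and your formula for the braiding on $\cC\sslash G$, namely $(\psi_g\otimes\id_{X_g})\circ c_{X_g,Y}$ on the degree-$g$ summand, is exactly the formula the paper itself uses later (in the proof of \autoref{lem_ribbon}). The identification of $\Rep(G)$ with equivariant structures on multiples of $\unit$, and the observation that the restricted braiding is the flip because only $\psi_1=\id$ ever appears, are both correct.

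Two caveats. First, your text is a plan rather than a proof: the places you flag as ``the genuine content'' --- the hexagon verifications, the well-definedness of the descended $G$-crossed braiding on $\Mod_\cD(\C^G)$, and the mutual inverseness of the two constructions --- are precisely where all the work lies, and none of it is carried out. In particular, the assertion that the monodromy of $A=\C^G$ past a module $M$ ``acts by translation by $g$'' is, as written, circular (the degree of $M$ is \emph{defined} by which translation the monodromy induces); one must first argue that the monodromy gives an automorphism of the \'etale algebra $A$ at all, using connectedness, and then decompose $\cD_G$ accordingly. Second, the grading so obtained need not be faithful for an arbitrary $\cD\supset\Rep(G)$, whereas the paper's definition of a braided $G$-crossed category demands a faithful grading; this hypothesis-level subtlety is glossed over both in your sketch and in the proposition as stated, but it is worth being aware that the converse direction requires it (or a nondegeneracy assumption on $\cD$) to land literally in the category of structures defined in \autoref{sec_Gcrossed}.
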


\begin{rem}
The equivariantisation of a braided $G$-crossed tensor category $\cC$ is a nondegenerate braided tensor category if and only if the neutral component $\cC_1$ is a nondegenerate braided tensor category. For the existence of a ribbon element, see \autoref{sec_GRibbon}.
\end{rem}

We discuss some examples of braided $G$-crossed tensor categories and their equivariantisations.
\begin{ex}\label{ex_DrinfeldCenter}~
\begin{enumerate}[wide]
\item The archetypal example of a braided $G$-crossed tensor category is $\smash{\Vect_G}$, the pointed fusion category of $G$-graded vector spaces with the obvious $G$-grading. Up to equivalence, it has a unique braided $G$-crossed structure where the $G$-action is strict and given by $g_*(\C_h)=\C_{ghg^{-1}}$ for $g,h\in G$, with the $G$-braiding being the identity.

In this case, the equivariantisation $\smash{\Vect_G}\sslash G$ is canonically braided equivalent to $\mathcal{Z}(\smash{\Vect_G})$, the Drinfeld centre of $\smash{\Vect_G}$. See \cite{NNW09} for more details.

\item For an arbitrary $G$-graded extension of $\Vect$, we need to \emph{twist} the previous example and consider the braided $G$-crossed tensor category $\smash{\Vect_G^\omega}$ for a $3$-cocycle $\omega\in Z^3(G,\C^\times)$, as in \autoref{ex_tensorVect}. Given $g,h\in G$, define the maps
\begin{equation}\label{eq_maps for twisting g-crossed}
\begin{split}
\gamma_{g,h}(x)&\coloneqq\frac{\omega(g,h,x)\omega(ghx(gh)^{-1},g,h)}{\omega(g, hxh^{-1},h)},\\
\mu_g(x,y)&\coloneqq\frac{\omega(gxg^{-1},g,y)}{\omega(gxg^{-1},gy,g)\omega(g,x,y)}
\end{split}
\end{equation}
for $x,y\in G$. The $G$-action on $\smash{\Vect_G^\omega}$ is on objects the same as before, and the $G$-braiding is again the identity. However, the action now is nonstrict. Indeed, the constraints are, for each $g,h\in G$ and $x,y\in G$,
\begin{equation*}
\tau^g_{x,y}=\mu_g(x,y)\id_{g_*(\C_{xy})},\quad T_2^{g,h}(\C_x)=\gamma_{g,h}(x)\id_{g_*(h_*(\C_x))}.
\end{equation*}

The equivariantisation $\smash{\Vect_G^\omega}\sslash G$ is canonically braided equivalent to the twisted Drinfeld centre $\mathcal{Z}(\smash{\Vect_G^\omega})$. Again, see \cite{NNW09} for more details.

\item Now, assume that $G$ is abelian. Hence the $G$-action is trivial on objects. In this case, $\mu_a\in Z^2(G,\C^\times)$ is a $2$-cocycle for each $a\in G$. The cohomologies $\mu_a$ for all $a\in G$ measure whether the category $\smash{\Vect_G^\omega}\sslash G$ is pointed or not. If $\gamma_a=\delta(l_a)$ for some $l_a\in C^1(G,\C^\times)$, then the simple objects of $\smash{\Vect_G^\omega}\sslash G$ correspond to elements in $\smash{G\times\widehat{G}}$. Specifically, the pair $(a,\gamma)$ corresponds to the object $\C_a$ with the equivariant structure $\eqi_b=\gamma(b)l_a(b)\id_{\C_a}$. The tensor product is determined by the extension of $G$ by $\smash{\widehat{G}}$ given by the $2$-cocycle $\gamma_{a,b}\frac{l_{ab}}{l_a l_b}\in\smash{Z^2(G,\widehat{G})}$. See \cite{GJ16} for details.
\end{enumerate}
\end{ex}


\subsection{Classification of Braided \texorpdfstring{$G$}{G}-Crossed Extensions}

We briefly recall the important result from \cite{ENO10}, Theorem~7.12, the classification of all braided $G$-crossed fusion categories $\cC$ for a given braided fusion category $\cC_1$ equipped with an action of a finite group~$G$. This is also thoroughly treated in \cite{DN21}, where the assumption of semisimplicity and rigidity is dropped, but finiteness in the sense of \cite{EGNO15} is retained (that is, they are working in the setting of finite braided tensor categories). A more detailed discussion of this result, in the form presented below, is given in \autoref*{GLM1sec_ENOENO} of \cite{GLM24a}.

\begin{thm}[\cite{DN21}, Section~8.3; \cite{ENO10}, Theorem~7.12]\label{thm_ourENO_braided}
Let $\cB$ be a finite tensor category with nondegenerate braiding, with an action of a finite group $G$ on $\cB$ by braided autoequivalences. Then, there is a certain obstruction $\mathrm{O}_4\in H^4(G,\C^\times)$, and if and only if this obstruction vanishes, there exists a braided $G$-crossed tensor category $\cC$, where $\cC_1=\cB$ as a braided tensor category with $G$-action. The equivalence classes of extensions associated with the $G$-action form a torsor over $\omega\in H^3(G,\C^\times)$; see \autoref{rem_torsor H3}.
\end{thm}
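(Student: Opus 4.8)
The plan is to prove this via the homotopy-theoretic classification of graded extensions from \cite{ENO10}, in the braided form of \cite{DN21}, rather than by constructing cocycle data by hand. First I would translate both the hypothesis and the conclusion into homotopy theory. By \cite{ENO10}, equivalence classes of braided $G$-crossed extensions of $\cB$ are in bijection with homotopy classes of maps of classifying spaces $BG\to B\underline{\underline{\Pic}}(\cB)$, where $\underline{\underline{\Pic}}(\cB)$ is the Picard categorical $2$-group of invertible $\cB$-module categories; unravelling such a map reproduces exactly a $G$-grading, a $G$-action and the crossed braiding subject to conditions (a)--(c) of the definition of a braided $G$-crossed tensor category. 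Dually, the given datum---a $G$-action on $\cB$ by braided tensor autoequivalences in the sense of \autoref{def_Gcrossed}, i.e.\ the functors $g_*$ together with the coherences $T_2^{g,h}$ and $\tau^g$---is precisely a map $BG\to B\underline{\underline{\Aut}}{}^{\mathrm{br}}(\cB)$ into the classifying space of the $2$-group of braided autoequivalences. Existence and classification of extensions thus become a lifting problem along the forgetful map $\underline{\underline{\Pic}}(\cB)\to\underline{\underline{\Aut}}{}^{\mathrm{br}}(\cB)$.

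The second and decisive step is to identify this forgetful map and its homotopy fiber, and here nondegeneracy enters. The $2$-group $\underline{\underline{\Pic}}(\cB)$ has homotopy groups $\pi_0=\Pic(\cB)$, $\pi_1=\operatorname{Inv}(\cB)$ and $\pi_2=\C^\times$, whereas $\underline{\underline{\Aut}}{}^{\mathrm{br}}(\cB)$ has $\pi_0=\Aut^{\mathrm{br}}(\cB)$, $\pi_1\cong\operatorname{Inv}(\cB)$ (via the double braiding, an isomorphism because $\cB$ is nondegenerate) and no ``quantum'' level $\pi_2$. By the theorem of Davydov--Nikshych--Ostrik, the comparison map $\Aut^{\mathrm{br}}(\cB)\to\Pic(\cB)$ is an isomorphism exactly because the braiding is nondegenerate; combined with the $\pi_1$-identification this shows that $\underline{\underline{\Pic}}(\cB)\to\underline{\underline{\Aut}}{}^{\mathrm{br}}(\cB)$ is an isomorphism on $\pi_0$ and $\pi_1$ and collapses only $\pi_2=\C^\times$. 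Hence the homotopy fiber of $B\underline{\underline{\Pic}}(\cB)\to B\underline{\underline{\Aut}}{}^{\mathrm{br}}(\cB)$ is the Eilenberg--MacLane space $K(\C^\times,3)$, with $G$ acting trivially on the coefficients $\C^\times$ (autoequivalences act trivially on scalar endomorphisms of the unit).

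Given this fiber sequence, the third step is routine obstruction theory. The map $BG\to B\underline{\underline{\Aut}}{}^{\mathrm{br}}(\cB)$ supplied by the fixed $G$-action already pins down all Postnikov data through the stage $\operatorname{Inv}(\cB)$---in particular the lower obstruction $\mathrm{O}_3\in H^3(G,\operatorname{Inv}(\cB))$ and its $H^2(G,\operatorname{Inv}(\cB))$-ambiguity are absorbed into the choice of action---so only the single fiber $K(\C^\times,3)$ remains to be lifted across. The primary (and only) obstruction to such a lift is a class $\mathrm{O}_4\in H^{3+1}(BG,\C^\times)=H^4(G,\C^\times)$, vanishing if and only if a braided $G$-crossed extension with $\cC_1=\cB$ and the prescribed action exists; and when $\mathrm{O}_4=0$ the set of homotopy classes of lifts is a torsor over $H^3(BG,\C^\times)=H^3(G,\C^\times)$, which is the asserted torsor on equivalence classes of extensions. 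In the non-semisimple generality of \cite{DN21} one replaces fusion-categorical module categories by exact module categories over the finite braided tensor category $\cB$, but this obstruction skeleton is unchanged.

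The main obstacle is not the obstruction calculus, which is formal once the setup is in place, but establishing the homotopy-theoretic dictionary and, above all, computing the homotopy fiber. Concretely, the delicate points are: verifying that the axioms of a braided $G$-crossed tensor category---especially the two crossed-braiding coherence diagrams in condition (c) and the grading compatibility (a)---match the data of a monoidal $2$-functor into $\underline{\underline{\Pic}}(\cB)$; checking that $(g_*,T_2^{g,h},\tau^g)$ genuinely assembles into a map into $\underline{\underline{\Aut}}{}^{\mathrm{br}}(\cB)$ fixing the data through $\pi_1=\operatorname{Inv}(\cB)$; and identifying the fiber as $K(\C^\times,3)$, for which nondegeneracy is indispensable. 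In the degenerate case the comparison map $\Aut^{\mathrm{br}}(\cB)\to\Pic(\cB)$ fails to be an isomorphism, the fiber is no longer a single Eilenberg--MacLane space, and the clean $H^4$/$H^3$ description breaks down---which is exactly why the hypothesis ``nondegenerate braiding'' is essential.
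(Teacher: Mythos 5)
This theorem is not proved in the paper at all: it is recalled verbatim from \cite{ENO10}, Theorem~7.12, and \cite{DN21}, Section~8.3, so there is no in-paper argument to compare against. Your sketch is a correct and faithful outline of the proof strategy of those cited references: the translation of extensions into maps $BG\to B\underline{\underline{\Pic}}(\cB)$, the identification $\Pic(\cB)\simeq\Aut^{\mathrm{br}}(\cB)$ on $\pi_0$ and $\operatorname{Inv}(\cB)$ on $\pi_1$ using nondegeneracy, the resulting fiber $K(\C^\times,3)$, and the standard obstruction calculus yielding $\mathrm{O}_4\in H^4(G,\C^\times)$ and the $H^3(G,\C^\times)$-torsor are exactly the mechanism of the ENO/DN classification. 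The only caveat is that you explicitly defer the genuinely hard verifications (matching the crossed-braiding axioms to a monoidal $2$-functor into $\underline{\underline{\Pic}}(\cB)$ and computing the fiber), so this is an accurate roadmap of the cited proof rather than a self-contained one --- which is consistent with how the present paper treats the statement, namely as an imported result.
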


\begin{rem}\label{rem_torsor H3}~
\begin{enumerate}[wide]
\item For a braided $G$-crossed tensor category $\cC$, we can define for $\omega\in Z^3(G,\C^\times)$ a \emph{twisted} $\cC^\omega$ by modifying the associativity constraint with $\omega$ and \emph{twisting} the $G$-action by multiplying with the scalars in equation \eqref{eq_maps for twisting g-crossed}. See Proposition~2.2 in \cite{EG18b} for details.
\item The braided $G$-crossed extensions and their $G$-equivariantisations of the trivial modular tensor category $\cB=\Vect$ correspond to $\smash{\Vect_G^\omega}$ and its Drinfeld centre, respectively; see \autoref{ex_DrinfeldCenter}.
\end{enumerate}
\end{rem}

We remark that the main assertion in this work on the existence of certain braided $G$-crossed extensions (see \autoref{thm_evenTY}) do not make use of \autoref{thm_ourENO_braided}. Nonetheless, it would be interesting to match our constructions to the work of \cite{ENO10} (see \autoref{prob_ENO}).


\subsection{\texorpdfstring{$G$}{G}-Ribbon Structure}\label{sec_GRibbon}

We also consider a $G$-crossed version of ribbon categories. The following definition is adapted from Lemma~2.3 in \cite{Kir04}; see also Section~4.5 of \cite{Gal22}.
\begin{defi}\label{def_Gtwist}
A \emph{$G$-twist} $\theta$ for a braided $G$-crossed tensor category $\cC=\smash{\bigoplus_{g\in G}\cC_g}$ is a natural isomorphism $\theta_{X_g}\colon X_g\to g_*(X_g)$ for $X_g\in\cC_g$, $g\in G$ such that $\theta_\unit=\id_\unit$ and the diagrams
\begin{equation}\label{eq_G-twist}
\begin{tikzcd}
X_g \otimes Y_h \arrow[rr, "{\theta_{X_g \otimes Y_h}}"] \arrow[d, "{c_{X_g, Y_h}}"'] && (gh)_*(X_g \otimes Y_h) \arrow[d, "{\tau^{gh}_{X_g, Y_h}}"]\\
g_*(Y_h) \otimes X_g \arrow[d, "{c_{g_*(Y_h), X_g}}"'] && (gh)_*(X_g) \otimes (gh)_*(Y_h) \arrow[d, "{T_2^{ghg^{-1}\!\!,g}(X_g) \otimes T_2^{g, h}(Y_h)}"]\\
(ghg^{-1})_*(X_g) \otimes g_*(Y_h) \arrow[rr, "{\substack{(ghg^{-1})_*(\theta_{X_g})\\\otimes g_*(\theta_{Y_h})}}"'] && (ghg^{-1})_*(g_*(X_g))\otimes g_*(h_*(Y_h))
\end{tikzcd}
\end{equation}
and
\begin{equation}\label{eq_G-invarianza}
\begin{tikzcd}
g_*(Y_h) \arrow[d,"\theta_{g_*(Y_h)}"']\arrow[rrr, "g_*(\theta_{Y_h})"]&&& g_*(h_*(Y_h))\\
(ghg^{-1})_*(g_*(Y_h)) &&& (gh)_*(Y_h) \arrow[u, "T_2^{g,h}(Y_h)"'] \arrow[lll, "T_2^{ghg^{-1}\!\!,g}(Y_h)"]
\end{tikzcd}
\end{equation}
commute for all $X_g\in\cC_g$, $Y_h\in\cC_h$, $g,h\in G$.

A $G$-twist in a rigid braided $G$-crossed tensor category is called a \emph{$G$-ribbon} if the following diagram commutes for all $g\in G$ and $X\in\cC_g$:
\begin{equation}\label{eq_selft-duality}
\begin{tikzcd}
\unit \arrow[rr, "(g^{-1})_*(\coeval_{X})"] \arrow[d, "\coeval_{X}"']
 && (g^{-1})_*(X \otimes X^*) \arrow[rr, "g_*^{-1}(\theta_{X} \otimes \id_{X^*})"]
 && g^{-1}_*(g_*(X) \otimes X^*) \arrow[d, "\tau^{g^{-1}}_{g_*(X),X^*}"]\\
X \otimes X^* \arrow[rr, "\id_{X} \otimes \theta_{X^*}"']
 && X \otimes g^{-1}(X^*) \arrow[rr, "T_2^{g^{-1}\!\!,g}(X) \otimes \id"']
 && g_*^{-1}g_*(X) \otimes g_*^{-1}(X^*)
\end{tikzcd}
\end{equation}
\end{defi}

\begin{rem}
In the definition of a $G$-ribbon, if we take $G$ to be the trivial group, the condition in diagram~\eqref{eq_selft-duality} reduces to $(\id_{X} \otimes\theta_{X^*})\circ\coeval_X=(\theta_X\otimes\id_{X^*})\circ\coeval_X$. This condition is equivalent to the more common requirement $\theta_{X^*}=(\theta_X)^*$; see the discussion following equation~\eqref{eq_twist condition} below.
\end{rem}

In the following, we show that $G$-twist and $G$-ribbon on a braided $G$-crossed tensor category descend to the usual notions of twist and ribbon, respectively, on the equivariantisation.

\begin{lem}\label{lem_natural isomorhism}
Let $\cC$ be a braided $G$-crossed tensor category. If $\theta_{X_g}\colon X_g\to g_*X_g$ for $X_g\in\cC_g$, $g\in G$ is a natural isomorphism that satisfies the commutativity of diagram~\eqref{eq_G-invarianza}, then for any equivariant object $(X,\eqi) \in\cC\sslash G$, the isomorphism
\begin{equation*}
\omega_{(X,\eqi)}\colon X=\bigoplus_{h\in G} X_h\overset{\oplus_{h\in G}\theta_{X_h}}{\longrightarrow}\bigoplus_{h\in G}h_*(X_h)\overset{\oplus_{h\in G}\eqi_h^{(h)}}{\longrightarrow}\bigoplus_{h\in G}X_h=X
\end{equation*}
is a natural isomorphism of the identity in $\cC\sslash G$, where $\eqi^{(h)}_h\colon h_*(X_h)\to X_h$ is the restriction of $\eqi_h$ to $X_h\subset X$ for $h\in G$.
\end{lem}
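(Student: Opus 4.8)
The plan is to verify the two conditions required for $\omega_{(X,\eqi)}$ to be a natural isomorphism of the identity functor on $\cC\sslash G$: first, that each $\omega_{(X,\eqi)}$ is genuinely a morphism in $\cC\sslash G$, i.e.\ compatible with the equivariant structures; and second, that the family $(\omega_{(X,\eqi)})_{(X,\eqi)}$ is natural with respect to morphisms of equivariant objects. Invertibility is automatic, since $\omega_{(X,\eqi)}$ is a composite of the isomorphisms $\theta_{X_h}$ and $\eqi_h^{(h)}$. Throughout, I would work with the grading-homogeneous decomposition: every object $X$ splits as $X=\bigoplus_{h\in G}X_h$ with $X_h\in\cC_h$, every morphism splits accordingly, and since $g_*(X_h)\in\cC_{ghg^{-1}}$, the component of $\eqi_g$ on $g_*(X_h)$ is an isomorphism $e_g^h\colon g_*(X_h)\to X_{ghg^{-1}}$ (so that in particular $\eqi_h^{(h)}=e_h^h$).

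For naturality, I would take a morphism $f\colon(X,\eqi)\to(Y,\eqi')$ in $\cC\sslash G$, decompose it as $f=\bigoplus_h f_h$ with $f_h\colon X_h\to Y_h$, and restrict the equivariance relation $\eqi'_h\circ h_*(f)=f\circ\eqi_h$ to the summand $h_*(X_h)$ to obtain $\eqi'^{(h)}_h\circ h_*(f_h)=f_h\circ\eqi^{(h)}_h$. Combining this with the naturality square of $\theta$, namely $\theta_{Y_h}\circ f_h=h_*(f_h)\circ\theta_{X_h}$, immediately yields $\omega_{(Y,\eqi')}\circ f=f\circ\omega_{(X,\eqi)}$ on each homogeneous component, which is the desired naturality.

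The main content is showing that $\omega_{(X,\eqi)}$ is equivariant, i.e.\ $\eqi_g\circ g_*(\omega_{(X,\eqi)})=\omega_{(X,\eqi)}\circ\eqi_g$ for all $g$. Writing $k=ghg^{-1}$ and restricting this identity to the summand $g_*(X_h)$, the claim reduces to the equality $e_g^h\circ g_*(e_h^h)\circ g_*(\theta_{X_h})=e_k^k\circ\theta_{X_k}\circ e_g^h$. I would treat the right-hand side by first applying naturality of $\theta$ along $e_g^h$ to rewrite $\theta_{X_k}\circ e_g^h=k_*(e_g^h)\circ\theta_{g_*(X_h)}$, and then using diagram~\eqref{eq_G-invarianza} (with $Y_h=X_h$) to replace $\theta_{g_*(X_h)}$ by $T_2^{k,g}(X_h)\circ(T_2^{g,h}(X_h))^{-1}\circ g_*(\theta_{X_h})$. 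After cancelling the common factor $g_*(\theta_{X_h})$, both sides are expressed purely in terms of the components $e$ and the constraints $T_2$, and the remaining identity is exactly what the equivariance cocycle relation $\eqi_{gh}=\eqi_g\circ g_*(\eqi_h)\circ T_2^{g,h}$ delivers, once restricted to the appropriate homogeneous summands.

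The key observation, and the one subtlety I would flag, is that the cocycle relation must be invoked twice, for the two different pairs $(g,h)$ and $(k,g)$, and these match up precisely because $kg=ghg^{-1}g=gh$; thus both applications produce the same morphism $e_{gh}^h=e_{kg}^h\colon(gh)_*(X_h)\to X_k$, and both sides of the reduced identity collapse to $e_{gh}^h\circ(T_2^{g,h}(X_h))^{-1}\circ g_*(\theta_{X_h})$. The only place where genuine care is needed is in bookkeeping the graded components and checking that each composite lands in the asserted homogeneous piece (using $g_*(\cC_h)\subset\cC_{ghg^{-1}}$); the categorical coherence itself is supplied entirely by \eqref{eq_G-invarianza} and the equivariance cocycle, with naturality of $\theta$ serving as the glue.
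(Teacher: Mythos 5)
Your proof is correct and follows essentially the same route as the paper's: decompose into grading-homogeneous components, use naturality of $\theta$ together with diagram~\eqref{eq_G-invarianza}, and invoke the equivariance cocycle relation twice, for the pairs $(g,h)$ and $(ghg^{-1},g)$, matched via $ghg^{-1}\cdot g=gh$. The only difference is cosmetic: the paper first reduces without loss of generality to a strict $G$-action so that the $T_2$'s disappear, whereas you carry them explicitly, which is slightly more laborious but equally valid.
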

\begin{proof}
In order to verify that $\omega_{(X,\eqi)}$ is a morphism in $\cC\sslash G$, we need to check that $\eqi_g g_*(\omega)=\omega\eqi_g$ for all $g\in G$. Since $\eqi_g g_*(\omega)=\omega\eqi_g$ is an equality of morphisms in $\cC=\bigoplus_{h} \cC_h$, it is enough to check the equality in the components. Hence, denoting
\begin{equation*}
\eqi_g=\bigoplus_{h\in G}\eqi_g^{(ghg^{-1})}\quad\text{and}\quad\omega=\bigoplus_{h\in G}\omega^{(h)},
\end{equation*}
where $\smash{\eqi_g^{(ghg^{-1})}\colon g_*(X_h)\to X_{ghg^{-1}}}$ and $\smash{\omega^{(h)}=\eqi_h^{(h)}\theta_{X_h}\colon X_h\to X_h}$, we need to check for all $g,h\in G$ that
\begin{equation*}
\eqi_g^{(ghg^{-1})}g_*(\omega^{(h)})=\omega^{(ghg^{-1})}\eqi_g^{(ghg^{-1})}.
\end{equation*}

We shall assume, without loss of generality, that the $G$-action on $\cC$ is strict. Thus, diagram~\eqref{eq_G-invarianza} simply becomes
\begin{equation}\label{eq_invarianza}
g_*(\theta_X)=\theta_{g_*X}
\end{equation}
for every object $X\in\cC$ and $g \in G$. Now, we can check the equality, for all $g,h\in G$,
{\allowdisplaybreaks
\begin{align*}
\eqi_g^{(ghg^{-1})} g_*(\omega^{(h)}) &= \eqi^{(ghg^{-1})}_g g_*(\eqi_h^{(h)}) g_*(\theta_{X_h})\\
&= \eqi_{gh} \circ \theta_{g_*(X_h)}\\
&= \eqi_{ghg^{-1}}^{(ghg^{-1})} (ghg^{-1})_*(\eqi_g) \circ \theta_{g_*(X_h)}\\
&= \eqi_{ghg^{-1}}^{(ghg^{-1})} \theta_{X_{ghg^{-1}}} \eqi_g^{ghg^{-1}}\\
&= \omega^{(ghg^{-1})} \eqi_g^{(ghg^{-1})},
\end{align*}
}%
where in the second equality we have used the $G$-equivariance of $\eqi$ and \eqref{eq_invarianza}, in the third the $G$-equivariance of $\eqi$ again and in the fourth the naturality of $\theta$.

Finally, if $f\colon(X,\eqi)\to(Y,\eqi')$ is a morphism in $\cC\sslash G$, then $\eqi_g' g_*(f) = f \eqi_g$ for all $g\in G$ by definition. Taking the $g$-component of $f$, we get $\smash{{\eqi'}_g^{(g)}} g_*(f^{(g)})= f^{(g)}\smash{\eqi_g^{(g)}}$. Using the naturality of $\theta$ and the previous equation, we obtain for all $g \in G$:
{\allowdisplaybreaks
\begin{align*}
\omega^{(g)}_{(Y, \eqi')} f^{(g)} &= \eqi'^{(g)}_g \theta_{Y_g} f^{(g)} \\
&= \eqi'^{(g)}_g g_*(f^{(g)}) \theta_{X_g} \\
&= f^{(g)} \eqi_h^{(g)} \theta_{X_g} \\
&= f^{(g)} \omega^{(g)}_{(X, \eqi)}.
\end{align*}
}%
Therefore, $\omega_{(Y,\eqi')}f=f\omega_{(X,\eqi)}$, i.e.\ $\omega$ is a natural isomorphism of the identity.
\end{proof}

Recall that if $\cB$ is a braided tensor category, with braiding $c$, then a \emph{twist} is a natural isomorphism of the identity $\omega_X\colon X \to X$ such that $\omega_\unit = \id_\unit$ and
\begin{equation}\label{eq_twist condition}
\omega_{X \otimes Y} = (\omega_X \otimes \omega_Y) c_{Y, X} c_{X, Y}
\end{equation}
for all $X,Y\in\cB$. If $\cB$ is left rigid and additionally $\omega_{X^*}=(\omega_X)^*$ for all $X$, then $\omega$ is called a \emph{ribbon}.

\begin{prop}\label{lem_ribbon}
Let $\smash{\cC=\bigoplus_{g\in G}\cC_g}$ be a braided $G$-crossed tensor category and $\theta$ a $G$-twist. Then the natural isomorphism $\omega_{(X, \eqi)}$ of the identity from \autoref{lem_natural isomorhism} defines a twist for $\cC\sslash G$. If $\theta$ is a $G$-ribbon, then $\omega_{(X,\eqi)}$ is a ribbon.
\end{prop}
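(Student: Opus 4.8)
The plan is to build on \autoref{lem_natural isomorhism}, which already guarantees that $\omega_{(X,\eqi)}$ is a natural automorphism of the identity functor on $\cC\sslash G$. It therefore remains only to check the defining axioms of a twist, and in the rigid case of a ribbon: the normalisation $\omega_{(\unit,\id_\unit)}=\id_\unit$, the multiplicativity condition \eqref{eq_twist condition}, and the self-duality $\omega_{(X,\eqi)^*}=(\omega_{(X,\eqi)})^*$. As in the proof of \autoref{lem_natural isomorhism}, I would assume without loss of generality (by strictification) that the $G$-action is strict, so that the $T_2$ and the $\tau^g$ become identities; this removes most of the bookkeeping while leaving the conceptual content intact. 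The normalisation is then immediate: the unit of $\cC\sslash G$ is $(\unit,\id_\unit)$, concentrated in degree $1$, and since $\theta_\unit=\id_\unit$ and $\eqi_1^{(1)}=\id_\unit$, the composite defining $\omega$ collapses to the identity.

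The main step is the twist axiom \eqref{eq_twist condition}. Here I would first recall that the braiding on $\cC\sslash G$ is assembled from the $G$-crossed braiding: on a homogeneous piece $X_g\in\cC_g$ it sends $X_g\otimes Y$ to $Y\otimes X_g$ by applying $c_{X_g,Y}\colon X_g\otimes Y\to g_*(Y)\otimes X_g$ and then using the equivariant structure $\eqi'_g$ to identify $g_*(Y)$ with $Y$. Writing $X=\bigoplus_g X_g$ and $Y=\bigoplus_h Y_h$, it suffices to verify \eqref{eq_twist condition} on each component $X_g\otimes Y_h$. On that component the left-hand side is $\eqi''_{gh}$ applied to $\theta_{X_g\otimes Y_h}$, while the right-hand side is the double $G$-braiding $c_{g_*(Y_h),X_g}\circ c_{X_g,Y_h}$ followed by the component twists and the equivariant structures. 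Comparing the two, the identity to be proved is exactly the commuting square \eqref{eq_G-twist} defining a $G$-twist, once the equivariance relation $\eqi_{gh}=\eqi_g\circ g_*(\eqi_h)\circ T_2^{g,h}$ is used to reconcile $\eqi''_{gh}$ with $\eqi_g$, $\eqi'_h$ and the composite $(ghg^{-1})_*(\theta_{X_g})\otimes g_*(\theta_{Y_h})$ appearing along the bottom of \eqref{eq_G-twist}. Thus the $G$-twist diagram is precisely the statement that $\omega$ is multiplicative in $\cC\sslash G$.

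For the ribbon claim I would proceed analogously. The dual of $(X,\eqi)$ in $\cC\sslash G$ is $(X^*,\eqi^*)$ with the induced equivariant structure, and on a homogeneous piece $X_g\in\cC_g$ one has $X_g^*\in\cC_{g^{-1}}$. The condition $\omega_{(X,\eqi)^*}=(\omega_{(X,\eqi)})^*$, restricted to this component, unpacks—after inserting the definition of $\omega$ and the equivariant structure on the dual—into exactly the commuting diagram \eqref{eq_selft-duality} defining a $G$-ribbon, which is the $G$-crossed analogue of the usual $\theta_{X^*}=(\theta_X)^*$ noted in the remark following the definition. Hence a $G$-ribbon yields a ribbon on the equivariantisation. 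The main obstacle is the identification in the twist step: matching the composite appearing in \eqref{eq_twist condition} for $\cC\sslash G$—which is phrased through the braiding of the equivariantisation and hence involves the structures $\eqi$, $\eqi'$—with the intrinsic $G$-twist diagram \eqref{eq_G-twist}, whose bottom row instead features the pushforward twists $(ghg^{-1})_*(\theta_{X_g})$ and $g_*(\theta_{Y_h})$. This reconciliation is driven entirely by the equivariance relation for $\eqi$ and by \eqref{eq_G-invarianza} (equivalently $g_*(\theta_X)=\theta_{g_*X}$ in the strict case); keeping track of which functor $g_*$ is applied to which twist, and in which graded component the result lives, is where care is needed, even though each individual step is routine.
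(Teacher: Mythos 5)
Your proposal is correct and follows essentially the same route as the paper: invoke \autoref{lem_natural isomorhism}, strictify the $G$-action, and reduce the twist axiom \eqref{eq_twist condition} to diagram~\eqref{eq_G-twist} (via naturality and the equivariance relation for $\eqi$) and the ribbon axiom to diagram~\eqref{eq_selft-duality} on the dual $(X^*,(\eqi^*)^{-1})$. The only nitpick is that the induced equivariant structure on $X^*$ is $(\eqi^*)^{-1}$ rather than $\eqi^*$, which does not affect the argument.
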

\begin{proof}
The braiding of $\cC \sslash G$ is given by $\smash{\mathbf{c}_{(X,\eqi),(Y,\gamma)}=\bigoplus_{g\in G}\mathbf{c}_{X_g,Y}}$, where
\begin{equation*}
\begin{tikzcd}
\mathbf{c}_{X_g, Y}\colon X_g \otimes Y \arrow[r, "{c_{X_g, Y}}"] & g_*(Y) \otimes X_g \arrow[rr, "{\gamma_g \otimes \id_{X_g}}"] && Y \otimes X_g.
\end{tikzcd}
\end{equation*}
\autoref{lem_natural isomorhism} states that $\omega$ is indeed a natural isomorphism of the identity. Then, equality~\eqref{eq_twist condition} follows, assuming without loss of generality that the $G$-action is strict, using the naturality of $\theta$ and diagram~\eqref{eq_G-twist}.

Now, if $\cC$ is rigid, then so is $\cC \sslash G$. Indeed, if $(X, \eqi) \in \cC \sslash G$ and $(X^*, \eval_X, \coeval_X)$ is a dual of $X$ as an object in $\cC$, then $(X^*, (\eqi^*)^{-1}) \in \cC \sslash G$ and $(\eval_X, \coeval_X)$ define a dual in $\cC \sslash G$. Now, it is again straightforward to verify, assuming the $G$-action is strict, that diagram \ref{eq_selft-duality} implies that $\omega$ is a ribbon.
\end{proof}

For a tensor category $\cC$, we shall denote by $\smash{\Aut_\otimes(\id_{\cC})}$ the abelian group of tensor automorphisms of the identity. If a group $G$ acts on $\cC$, it induces a $G$-action on $\smash{\Aut_\otimes(\id_{\cC})}$ by group automorphisms. We define
\begin{align*}
\Aut_\otimes^G(\id_{\cC}) &= \{\gamma \in \Aut_\otimes(\id_{\cC}) \mid g_*(\gamma_X) = \gamma_{g_*(X)} \text{ for all } X \in \cC, g \in G\},\\
\Aut_\otimes^{G,*}(\id_{\cC}) &= \{\gamma \in \Aut_\otimes^G(\id_{\cC}) \mid \gamma^*_{X} = \gamma_{g^{-1}(X)} \text{ for all } X_g\in \cC, g\in G\}.
\end{align*}

\begin{prop}\label{prop_torsor twist and ribbon}
Let $\smash{\cC=\bigoplus_{g\in G}\cC_g}$ be a braided $G$-crossed tensor category. The set of all $G$-twists is a torsor over the abelian group $\smash{\Aut_\otimes^G(\id_{\cC})}$. The set of all $G$-ribbons is a torsor over $\smash{\Aut_\otimes^{G,*}(\id_{\cC})}$.
\end{prop}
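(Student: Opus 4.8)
The plan is to make the two group actions explicit and to show in each case that the action is free and transitive, the set of $G$-twists being nonempty in all cases of interest (where we exhibit one). As in the proof of \autoref{lem_natural isomorhism}, I may assume without loss of generality that the $G$-action on $\cC$ is strict, so that \eqref{eq_G-invarianza} reduces to the relation \eqref{eq_invarianza}, i.e.\ $g_*(\theta_{X_h})=\theta_{g_*(X_h)}$, and the coherence isomorphisms $T_2$ and $\tau$ drop out of the diagram \eqref{eq_G-twist}.

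First I would define the action of $\gamma\in\smash{\Aut_\otimes^G(\id_\cC)}$ on a $G$-twist $\theta$ by $(\gamma\theta)_{X_g}\coloneqq\gamma_{g_*(X_g)}\circ\theta_{X_g}$. Using the $G$-equivariance $\gamma_{g_*(X_g)}=g_*(\gamma_{X_g})$ together with the naturality of $\gamma$ applied to the morphism $\theta_{X_g}\colon X_g\to g_*(X_g)$, this equals $\theta_{X_g}\circ\gamma_{X_g}$, so the two natural ways of twisting $\theta$ by $\gamma$ coincide. I then have to verify that $\gamma\theta$ is again a $G$-twist. The normalisation $(\gamma\theta)_\unit=\gamma_\unit\theta_\unit=\id_\unit$ is immediate from $\gamma_\unit=\id_\unit$; the invariance relation \eqref{eq_invarianza} for $\gamma\theta$ follows from \eqref{eq_invarianza} for $\theta$ and the $G$-equivariance of $\gamma$; and the compatibility \eqref{eq_G-twist} with the $G$-braiding is obtained by inserting $\gamma_{X_g\otimes Y_h}=\gamma_{X_g}\otimes\gamma_{Y_h}$ (the monoidality of $\gamma$) and pushing the factors $\gamma_{X_g},\gamma_{Y_h}$ through the braidings $c_{X_g,Y_h}$ and $c_{g_*(Y_h),X_g}$ by naturality of $\gamma$, then comparing with \eqref{eq_G-twist} for $\theta$.

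Next I would establish freeness and transitivity by constructing the inverse correspondence. Given two $G$-twists $\theta,\theta'$, set $\gamma_{X_g}\coloneqq\theta_{X_g}^{-1}\circ\theta'_{X_g}\in\End(X_g)$ on each homogeneous component and extend additively, $\gamma_X=\bigoplus_g\gamma_{X_g}$. Naturality of $\gamma$ is inherited from that of $\theta$ and $\theta'$; the $G$-equivariance $g_*(\gamma_X)=\gamma_{g_*(X)}$ is read off from \eqref{eq_invarianza} for both twists; and the monoidality $\gamma_{X\otimes Y}=\gamma_X\otimes\gamma_Y$ is exactly the content of comparing diagram \eqref{eq_G-twist} for $\theta$ with the same diagram for $\theta'$. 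Thus $\gamma\in\smash{\Aut_\otimes^G(\id_\cC)}$, and it is the unique element with $\gamma\theta=\theta'$; this proves that $\smash{\Aut_\otimes^G(\id_\cC)}$ acts simply transitively, i.e.\ the $G$-twists form a torsor.

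Finally, for the ribbon statement, a $G$-ribbon is a $G$-twist additionally satisfying \eqref{eq_selft-duality}. Writing $\theta'=\gamma\theta$ and substituting into \eqref{eq_selft-duality}, I would use \eqref{eq_selft-duality} for $\theta$ and the naturality of $\eval_X,\coeval_X$ to cancel the $\theta$-dependence; what remains is precisely the requirement $\gamma^*_X=\gamma_{g^{-1}_*(X)}$, i.e.\ $\gamma\in\smash{\Aut_\otimes^{G,*}(\id_\cC)}$. Conversely, if $\gamma$ satisfies this dual compatibility, the same computation shows that $\gamma\theta$ again satisfies \eqref{eq_selft-duality}. Hence the action restricts to a simply transitive action of the subgroup $\smash{\Aut_\otimes^{G,*}(\id_\cC)}$ on the set of $G$-ribbons. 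The main obstacle throughout is the braiding compatibility \eqref{eq_G-twist}: one must verify carefully that the monoidality of $\gamma$ is equivalent to \eqref{eq_G-twist} being preserved, which requires tracking how $\gamma$ commutes past both the $G$-crossed braiding and the $G$-action; the analogous bookkeeping with $\eval$ and $\coeval$ in \eqref{eq_selft-duality} is what singles out the starred subgroup in the ribbon case.
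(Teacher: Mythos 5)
Your proposal is correct and follows essentially the same route as the paper: assume strictness without loss of generality, define $\gamma=\theta^{-1}\theta'$ componentwise and verify that it lies in $\smash{\Aut_\otimes^G(\id_{\cC})}$ (monoidality coming from comparing the two equivalent forms of diagram~\eqref{eq_G-twist} via naturality of the $G$-braiding, equivariance from~\eqref{eq_G-invarianza}), check conversely that $\gamma\theta$ is again a $G$-twist, and single out $\smash{\Aut_\otimes^{G,*}(\id_{\cC})}$ in the ribbon case via the duality condition~\eqref{eq_selft-duality}. The only differences are presentational: the paper writes out the cancellation in $\gamma_{X\otimes Y}=\gamma_X\otimes\gamma_Y$ explicitly where you summarise it, and neither argument actually needs the nonemptiness remark you include.
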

\begin{proof}
We shall assume without loss of generality that $\cC$ is a \emph{strict} braided $G$-crossed tensor category; see Theorem~5.6 in \cite{G17}. The commutativity of diagram~\eqref{eq_G-twist} corresponds to the equation, for $X \in \cC_g$, $Y \in \cC_h$, $g,h\in G$,
\begin{equation*}
\theta_{X\otimes Y} = (ghg^{-1})_*(\theta_X) \otimes g_*(\theta_Y) \circ c_{g_*Y, X} \circ c_{X,Y}.
\end{equation*}
Now, using the naturality of the $G$-braiding, we also obtain
\begin{equation*}
\theta_{X \otimes Y} = c_{gh_*Y, g_*X} \circ c_{g_*X, h_*Y} \circ (\theta_X \otimes \theta_Y).
\end{equation*}
Let $\theta$ and $\theta'$ be $G$-twists of $\cC$. We shall prove that $\smash{\gamma_X \coloneqq \theta^{-1}_X \theta'_X \colon X \to X}$ is a natural isomorphism in $\smash{\Aut_\otimes^G(\id_{\cC})}$. We observe that
{\allowdisplaybreaks
\begin{align*}
\gamma_{X\otimes Y}&=\theta_{X \otimes Y}^{-1} \circ \theta_{X \otimes Y}' \\
&=\bigl((ghg^{-1})_*(\theta_X) \otimes g_*(\theta_Y) \circ c_{g_*Y, X} \circ c_{X, Y}\bigr)^{-1}\\
&\quad\circ\bigl(c_{gh_*Y, g_*X} \circ c_{g_*X, h_*Y} \circ (\theta_X' \otimes \theta_Y')\bigr)\\
&=c_{X, Y}^{-1}\circ c_{g_*Y, X}^{-1}\circ (ghg^{-1})_*(\theta_X)^{-1} \otimes g_*(\theta_Y)^{-1}\\
&\quad\circ c_{gh_*Y, g_*X} \circ c_{g_*X, h_*Y} \circ (\theta_X' \otimes \theta_Y')\\
&= c_{X, Y}^{-1}\circ g(\theta_Y)^{-1}\otimes \theta_X^{-1} \circ c_{g_*X, h_*Y} \circ (\theta_X' \otimes \theta_Y')\\
&=(\theta_X^{-1} \otimes \theta_Y^{-1})\circ (\theta_X' \otimes \theta_Y')= (\theta_X^{-1}\theta_X')\otimes (\theta_Y^{-1}\theta_Y')\\
&=\gamma_X\otimes \gamma_Y,
\end{align*}
}%
for $X \in \cC_g$, $Y \in \cC_h$, $g,h\in G$, where we have used
\begin{equation*}
c_{g_*Y, X}^{-1} \circ (ghg^{-1})_*(\theta_X)^{-1} \otimes g_*(\theta_Y)^{-1} \circ c_{gh_*Y, g_*X} = g_*(\theta_Y)^{-1} \otimes \theta_X^{-1}
\end{equation*}
and
\begin{equation*}
\theta_X^{-1} \otimes \theta_Y^{-1} = c_{X, Y}^{-1} \circ (g_*(\theta_Y)^{-1} \otimes \theta_X^{-1}) \circ c_{g_*X, h_*Y}
\end{equation*}
in the penultimate and last equality, respectively. These equations follow directly from the naturality of the $G$-braiding. Now,
\begin{equation*}
g_*(\gamma_X) = g_*(\theta_X^{-1} \theta'_X) = g_*(\theta_X^{-1}) g_*(\theta'_X) = \theta_{g_*(X)}^{-1} \theta_{g_*(X)}' = \gamma_{g_*(X)}
\end{equation*}
for all $X \in \cC_g$, $g \in G$.

Conversely, given $\gamma\in\smash{\Aut_\otimes^G(\id_{\cC})}$ and a $G$-twist $\theta$, it is straightforward to check that $\gamma \theta$ is a new twist. Then the group $\smash{\Aut_\otimes^G(\id_{\cC})}$ acts freely and transitively on the set of $G$-twists.

Finally, if $\theta$ and $\theta'$ are $G$-ribbons, then
{\allowdisplaybreaks
\begin{align*}
\gamma_{X^*} &= \theta_{X^*}^{-1} \theta'_{X^*} = g^{-1}_*(\theta_{X})^{-1} g^{-1}_*(\theta'_X) \\
             &= g^{-1}_*(\theta_X^{-1} \theta'_X) = g^{-1}_*(\gamma_X)\\
             &= \gamma_{g^{-1}(X)}
\end{align*}
}%
for all $X \in \cC_g$ and $g \in G$. It is easy to verify that, given $\gamma\in\smash{\Aut_\otimes^{G,*}(\id_{\cC})}$ and a $G$-twist $\theta$, the natural isomorphism $\gamma \theta$ is a new $G$-ribbon.
\end{proof}

Given a braided $G$-crossed tensor category $\cC$, we define a group homomorphism
\begin{align*}
\Omega\colon \Aut_\otimes^G(\id_{\cC}) &\to \Aut_\otimes(\id_{\cC \sslash G}),\\
\Omega(\gamma)_{(X, \eqi)} &= \gamma_X.
\end{align*}
The group homomorphism $\Omega$ restricted to $\smash{\Aut_\otimes^{G,*}(\id_{\cC})}$ takes values in $\smash{\Aut_\otimes^{*}(\id_{\cC \sslash G})}$, the abelian group of tensor isomorphisms of the identity such that $\gamma_{X^*} = \gamma_X^*$.

\begin{cor}
Let $\cC$ be a braided $G$-crossed tensor category. Two $G$-twists $\theta, \theta'$ (respectively $G$-ribbons) produce the same twist (respectively ribbon) on the equivariantisation $\cC \sslash G$ if and only if $\Omega(\theta^{-1} \theta') = \id$.
\end{cor}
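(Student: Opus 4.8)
The plan is to reduce the statement to a single componentwise computation expressing the ratio of the two induced twists as the image of $\theta^{-1}\theta'$ under $\Omega$. As in the preceding proofs, I would assume without loss of generality that the $G$-action on $\cC$ is strict, so that the twist on $\cC\sslash G$ induced by a $G$-twist $\theta$ via \autoref{lem_natural isomorhism} and \autoref{lem_ribbon} takes the explicit form $\omega^\theta_{(X,\eqi)}=\bigoplus_{h\in G}\eqi_h^{(h)}\circ\theta_{X_h}$ on an equivariant object $(X,\eqi)$, where $\eqi_h^{(h)}\colon h_*(X_h)\to X_h$ is the restriction of $\eqi_h$. I write $\omega^\theta$ and $\omega^{\theta'}$ for the two induced twists (resp. ribbons) whose equality is in question.

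First I would invoke \autoref{prop_torsor twist and ribbon}: since $\theta$ and $\theta'$ are both $G$-twists, the natural isomorphism defined componentwise by $\gamma_{X_g}\coloneqq\theta_{X_g}^{-1}\theta'_{X_g}\colon X_g\to X_g$ lies in $\smash{\Aut_\otimes^G(\id_{\cC})}$; in the ribbon case the same proposition places $\gamma$ in $\smash{\Aut_\otimes^{G,*}(\id_{\cC})}$, whence $\Omega(\gamma)\in\smash{\Aut_\otimes^{*}(\id_{\cC\sslash G})}$ automatically, so no extra argument is needed there. The only algebraic manipulation required is to rewrite this relation as $\theta'_{X_g}=\theta_{X_g}\circ\gamma_{X_g}$, and to observe that, $\gamma$ being a tensor automorphism of the identity on $\cC$, naturality along the inclusion $X_h\hookrightarrow X$ yields $\gamma_X\big|_{X_h}=\gamma_{X_h}$, so that $\gamma_X=\bigoplus_{h\in G}\gamma_{X_h}=\Omega(\gamma)_{(X,\eqi)}$.

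Next I would compare the two induced twists on each graded piece: on $X_h$ one computes $\omega^{\theta'}_{(X,\eqi)}\big|_{X_h}=\eqi_h^{(h)}\circ\theta'_{X_h}=\eqi_h^{(h)}\circ\theta_{X_h}\circ\gamma_{X_h}=\bigl(\omega^{\theta}_{(X,\eqi)}\big|_{X_h}\bigr)\circ\gamma_{X_h}$. Summing over $h\in G$ gives the clean identity $\omega^{\theta'}_{(X,\eqi)}=\omega^{\theta}_{(X,\eqi)}\circ\Omega(\gamma)_{(X,\eqi)}$, natural in $(X,\eqi)$, from which the corollary is immediate: the two induced twists (resp. ribbons) agree if and only if $\Omega(\gamma)=\Omega(\theta^{-1}\theta')=\id$. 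The only real obstacle here is bookkeeping, namely ensuring that the restrictions $\eqi_h^{(h)}$, $\theta_{X_h}$ and $\gamma_{X_h}$ are composed in the correct order and that $\gamma_X$ genuinely decomposes along the grading; both follow from strictness and the naturality of $\gamma$, so the argument requires no genuinely hard step once \autoref{prop_torsor twist and ribbon} is in hand.
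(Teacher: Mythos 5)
Your proof is correct and is essentially the argument the paper leaves implicit: the corollary is stated without proof precisely because the componentwise identity $\omega^{\theta'}_{(X,\eqi)}=\omega^{\theta}_{(X,\eqi)}\circ\Omega(\theta^{-1}\theta')_{(X,\eqi)}$, which you verify carefully, follows directly from the construction in \autoref{lem_natural isomorhism} and the torsor statement in \autoref{prop_torsor twist and ribbon}. Your bookkeeping (order of composition, decomposition of $\gamma_X$ along the grading via naturality) is accurate, so nothing is missing.
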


\begin{rem}
If $\cC$ is fusion, then $\smash{\Aut_\otimes(\id_{\cC})}$ is canonically isomorphic to $\smash{\widehat{U(\cC)}}$, the character group of $U(\cC)$, called the universal grading group; see \cite{GN08}. The subgroup $\smash{\Aut_\otimes^*(\id_{\cC})}$ consists of elements of order~$2$, i.e.\ it is in particular an elementary abelian $2$-group. Now, in the case of a braided $G$-crossed fusion category, $\smash{\Aut_\otimes^G(\id_{\cC})}=\smash{\Aut_\otimes(\id_{\cC})}$ and $\smash{\Aut_\otimes^{G,*}(\id_{\cC})}=\smash{\Aut_\otimes^*(\id_{\cC})}$. In particular, in the fusion category case, the number of pivotal and spherical structures corresponds to $G$-twists and $G$-ribbons, respectively.
\end{rem}

\begin{ex}
Consider the braided $G$-crossed tensor category $\smash{\Vect_G^\omega}$ from \autoref{ex_DrinfeldCenter}. This category has a canonical $G$-ribbon given by the identity functor. Hence, it follows from \autoref{prop_torsor twist and ribbon} that the $G$-twist structures of $\smash{\Vect_G^\omega}$ are in bijective correspondence with the linear characters $\smash{\widehat{G}}$, and the set of $G$-ribbon structures with $\smash{\widehat{G}_2}$, the linear characters taking values in $\{\pm 1\}$.
\end{ex}


\subsection{Pseudo-Unitary Fusion Categories and \texorpdfstring{$G$}{G}-Graded Extensions}\label{sec_pseudo}

Finally, we also study the notion of pseudo-unitarity in the context of $G$-graded fusion categories. This will later allow us to single out a particular choice of $G$-ribbon structure (see \autoref{cor_TYpseudounitary} and \autoref{thm_evenTY}).

The global dimension, denoted by $\dim(\cC)$, is defined for any fusion category $\cC$ in, e.g., Definition~2.2 of \cite{ENO10}. This definition is intrinsic to $\cC$. Another dimension of a fusion category $\cC$ is the Frobenius-Perron dimension $\operatorname{FPdim}(\cC)\in\R_{>0}$, which only depends on the Grothendieck ring $K_0(\cC)$ of $\cC$; see Section~8 of \cite{ENO10}.

We are interested in the relationship between the global dimension and the Frobenius-Perron dimension of fusion categories, as it relates to the existence of spherical structures for which the associated dimension function coincides with the Frobenius-Perron dimensions, as indicated by the following result; see Proposition~8.23 in \cite{ENO05}.
\begin{prop}\label{prop_equi def pseudo-unitary}
Let $\cC$ be a fusion category (over~$\C$). Then $\cC$ admits a (unique) spherical structure whose quantum (or categorical) dimensions match the Frobenius-Perron dimensions if and only if $\operatorname{FPdim}(\cC) = \dim(\cC)$. Moreover, this spherical structure is unique.
\end{prop}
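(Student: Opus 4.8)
The plan is to play the two dimension functions on the Grothendieck ring $K_0(\cC)$ against each other. First I would recall that $\operatorname{FPdim}$ is a ring homomorphism $K_0(\cC)\to\R$, and that it is the \emph{Frobenius--Perron character}: for each simple $X_i$, the value $\operatorname{FPdim}(X_i)$ is the Perron eigenvalue of the nonnegative integral left-multiplication matrix $N_i$, hence it dominates the absolute value of every eigenvalue of $N_i$. On the other hand, any pivotal (in particular spherical) structure $a$ on $\cC$ yields a categorical dimension $\dim_a$, which is again a ring homomorphism $K_0(\cC)\to\C$; the vector $(\dim_a(X_j))_j$ is a simultaneous eigenvector of all the $N_i$, so $\dim_a(X_i)$ is one of the eigenvalues of $N_i$. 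Combining these observations gives the pointwise bound $|\dim_a(X_i)|\le\operatorname{FPdim}(X_i)$ for every $i$.

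Next I would bring in the intrinsic global dimension $\dim(\cC)=\sum_{X_i\in\Irr(\cC)}|X_i|^2$, where the squared norm $|X_i|^2$ is a positive real number independent of any choice of pivotal structure and equals $\dim_a(X_i)\,\dim_a(X_i^*)$ for any such structure (so it is $\dim_a(X_i)^2$ when $a$ is spherical with positive dimensions). Summing the bound from the first paragraph then yields
\begin{equation*}
\dim(\cC)=\sum_i|X_i|^2\le\sum_i\operatorname{FPdim}(X_i)^2=\operatorname{FPdim}(\cC),
\end{equation*}
with equality precisely when $|\dim_a(X_i)|=\operatorname{FPdim}(X_i)$ for all $i$, i.e.\ precisely when the spherical dimensions realise the Frobenius--Perron dimensions up to phase. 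This already settles the easy direction: if a spherical structure with $\dim_a=\operatorname{FPdim}$ exists, then, using $\operatorname{FPdim}(X_i)=\operatorname{FPdim}(X_i^*)$, we get $\dim(\cC)=\sum_i\operatorname{FPdim}(X_i)^2=\operatorname{FPdim}(\cC)$.

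For the converse, the hypothesis $\dim(\cC)=\operatorname{FPdim}(\cC)$ must be used to \emph{construct} a spherical structure realising $\dim_a=\operatorname{FPdim}$, and this is the main obstacle. The natural tool is the canonical monoidal natural isomorphism $\delta\colon\operatorname{Id}_\cC\to(\,\cdot\,)^{****}$ carried by every fusion category: a pivotal structure is precisely a monoidal natural isomorphism $a\colon\operatorname{Id}_\cC\to(\,\cdot\,)^{**}$ with $a^{**}\circ a=\delta$. I would attempt to build $a$ directly by prescribing $\dim_a(X_i)=\operatorname{FPdim}(X_i)$, fixing $a$ on each simple as the unique scalar square root of $\delta$ compatible with this positive normalisation, and then verifying monoidality. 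The delicate step—where the equality genuinely enters—is showing that this positivity-normalised family is monoidal (equivalently, that the sign obstruction to assembling the local square roots into a tensor natural isomorphism vanishes); the equality of the two global dimensions forces each modulus $|\dim_a(X_i)|$ to its maximal value $\operatorname{FPdim}(X_i)$, which is exactly the rigidity needed to kill that obstruction. Granting this, sphericity is immediate from $\operatorname{FPdim}(X_i)=\operatorname{FPdim}(X_i^*)$, which makes the left and right categorical traces agree. Finally, uniqueness is the soft part: any two spherical structures with $\dim_a=\operatorname{FPdim}$ differ by a tensor automorphism of the identity valued in $\{\pm1\}$ (cf.\ the remark after \autoref{prop_torsor twist and ribbon}), and such a sign character cannot change the strictly positive values $\operatorname{FPdim}(X_i)$, so it is trivial. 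This recovers the statement, which is Proposition~8.23 of \cite{ENO05}.
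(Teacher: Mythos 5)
The paper offers no proof of this statement: it is quoted directly from Proposition~8.23 of \cite{ENO05}, so the only meaningful comparison is with the argument given there, whose strategy your outline correctly identifies. The easy direction and the uniqueness claim are fine: a spherical structure with $\dim_a=\operatorname{FPdim}$ gives $\dim(\cC)=\sum_i\dim_a(X_i)\dim_a(X_i^*)=\sum_i\operatorname{FPdim}(X_i)^2=\operatorname{FPdim}(\cC)$, and two pivotal structures differ by some $\gamma\in\Aut_\otimes(\id_{\cC})$ with $\dim_{a'}(X)=\gamma_X\dim_a(X)$, so prescribing the (nonzero, positive) dimensions forces $\gamma_X=1$.

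In the converse direction, however, there are two genuine gaps. First, to normalise your square roots of $\delta\colon\id_\cC\to(\cdot)^{****}$ you need the \emph{termwise} equality $|X_i|^2=\operatorname{FPdim}(X_i)^2$, which you want to extract from the global equality via the termwise bound $|X_i|^2\le\operatorname{FPdim}(X_i)^2$. But your proof of that bound (``$\dim_a(X_i)$ is an eigenvalue of $N_i$'') presupposes a pivotal structure $a$ on $\cC$ --- exactly the object you are in the middle of constructing, and whether every fusion category admits one is an open conjecture. The fix in \cite{ENO05} (Proposition~8.21) is to pass to the pivotalisation $\tilde\cC$, which carries a canonical spherical structure, run the eigenvalue argument there, and descend to $\cC$. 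Second, and more seriously, the heart of the proof --- that the positivity-normalised family $(a_{X_i})$ is \emph{monoidal} --- is only asserted (``Granting this\dots''). The actual verification goes as follows: on the isotypic decomposition of $X\otimes Y$ the map $a_X\otimes a_Y$ agrees with $a_Z$ on each copy of $Z$ up to an involution $S_Z$ of the multiplicity space (both maps square to the canonical $\delta$); taking traces gives $\operatorname{FPdim}(X)\operatorname{FPdim}(Y)=\sum_Z\operatorname{FPdim}(Z)\,\mathrm{tr}(S_Z)$ with $\mathrm{tr}(S_Z)\le N_{X,Y}^Z$, and since $\sum_Z N_{X,Y}^Z\operatorname{FPdim}(Z)=\operatorname{FPdim}(X)\operatorname{FPdim}(Y)$, maximality forces every $S_Z=\id$. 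Without this computation (or an equivalent one) the proof is incomplete at its central step, since the vague appeal to ``rigidity killing the obstruction'' does not by itself rule out a sign.
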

The above property naturally leads to the following definition (see, e.g., Section~8.4 of \cite{ENO10}).
\begin{defi}\label{def_pseudounitary}
Let $\cC$ be a fusion category (over $\C$). The category $\cC$ is called \emph{pseudo-unitary} if $\operatorname{FPdim}(\cC) = \dim(\cC)$, or equivalently, by \autoref{prop_equi def pseudo-unitary}, if there exists a spherical structure with respect to which the dimension of every simple object is positive (and equal to the Frobenius-Perron dimension).
\end{defi}

The following result is an analogue of Proposition~8.20 in \cite{ENO05}, focusing on the global dimension rather than the Frobenius-Perron dimension.
\begin{prop}\label{prop_global dimension of grraded}
Let $G$ be a finite group and $\smash{\cC=\bigoplus_{g\in G}\cC_g}$ a faithfully $G$-graded fusion category. Then $\dim(\cC_1)=\dim(\cC_g)$ for all $g\in G$, and the global dimension of $\cC$ is given by $\dim(\cC)=|G|\dim(\cC_1)$.
\end{prop}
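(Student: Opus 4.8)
The plan is to reduce everything to the single claim that $\dim(\cC_g)=\dim(\cC_1)$ for each $g\in G$, where I write $\dim(\cC_g)\coloneqq\sum_{X\in\Irr(\cC_g)}|X|^2$ for the sum of the intrinsic squared norms of the simple objects lying in the component $\cC_g$. Indeed, since the grading gives $\Irr(\cC)=\bigsqcup_{g\in G}\Irr(\cC_g)$, the defining formula $\dim(\cC)=\sum_{X\in\Irr(\cC)}|X|^2$ splits as $\dim(\cC)=\sum_{g\in G}\dim(\cC_g)$; hence once the components are shown to have equal global dimension, the factor $|G|$ in $\dim(\cC)=|G|\dim(\cC_1)$ is immediate. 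This mirrors the structure of the proof of the Frobenius-Perron analogue in Proposition~8.20 of \cite{ENO05}, except that the Frobenius-Perron argument rests on the Frobenius-Perron eigenvector (the regular element), which has no counterpart for the squared norms $|X|^2$ (these do not assemble into a character of $K_0(\cC)$); so a genuinely different, module-theoretic input is required for the global dimension.

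The key structural input is that, because the grading is faithful, each component $\cC_g$ is an \emph{invertible} $(\cC_1,\cC_1)$-bimodule category, and the tensor product of $\cC$ induces equivalences $\cC_g\boxtimes_{\cC_1}\cC_h\simeq\cC_{gh}$ of $\cC_1$-bimodule categories realising the group law of $G$; this is part of the theory of $G$-extensions in \cite{ENO10} (see also \cite{DN21}). I would then invoke the fact that the global dimension is multiplicative under the relative Deligne product, in the normalisation $\dim(\cM\boxtimes_{\cC_1}\mathcal{N})\dim(\cC_1)=\dim(\cM)\dim(\mathcal{N})$, where for a $\cC_1$-module category the global dimension is again the sum of the squared norms of its simple objects. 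For $\cM=\cC_g$ these module-category squared norms coincide with the intrinsic squared norms $|X|^2$ computed inside $\cC$, so this quantity equals $\dim(\cC_g)$ as defined above.

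Combining these two inputs gives $\dim(\cC_{gh})\dim(\cC_1)=\dim(\cC_g)\dim(\cC_h)$ for all $g,h\in G$. Writing $f(g)\coloneqq\dim(\cC_g)/\dim(\cC_1)\in\R_{>0}$, this says exactly that $f\colon G\to(\R_{>0},\cdot)$ is a group homomorphism. Since $G$ is finite and $\R_{>0}$ is torsion-free, $f$ is trivial, so $\dim(\cC_g)=\dim(\cC_1)$ for every $g$, as desired. As a consistency check, duality $X\mapsto X^*$ restricts to a bijection $\Irr(\cC_g)\to\Irr(\cC_{g^{-1}})$ preserving squared norms, giving $\dim(\cC_g)=\dim(\cC_{g^{-1}})$; together with the case $h=g^{-1}$, where $\cC_g\boxtimes_{\cC_1}\cC_{g^{-1}}\simeq\cC_1$, this already forces $\dim(\cC_g)^2=\dim(\cC_1)^2$.

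I expect the main obstacle to be the justification of the multiplicativity of the global dimension under $\boxtimes_{\cC_1}$, equivalently the statement that an invertible $\cC_1$-bimodule category $\cM$ satisfies $\dim(\cM)=\dim(\cC_1)$. The cleanest route is to combine the relation $\cM\boxtimes_{\cC_1}\cM^{\mathrm{op}}\simeq\cC_1$ for an invertible bimodule with the multiplicativity formula and $\dim(\cM^{\mathrm{op}})=\dim(\cM)$, or alternatively to appeal to the Morita/Brauer-Picard framework of \cite{ENO10}, in which the global dimension is an invariant of the underlying $\cC_1$-module category. A secondary technical point to pin down is the identification of the intrinsic squared norm $|X|^2$ of a simple $X\in\cC_g$, computed inside $\cC$, with its squared norm as an object of the module category $\cC_g$; this is routine, since $|X|^2$ is computed from the evaluations and coevaluations using $X^*\in\cC_{g^{-1}}$, which are the same data in both settings.
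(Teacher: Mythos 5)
Your argument is correct in outline and rests on the same structural fact as the paper's proof --- that each component $\cC_g$ of a faithfully graded fusion category is an invertible $(\cC_1,\cC_1)$-bimodule category (Theorem~6.1 of \cite{ENO10}) --- but it extracts the dimension equality differently. The paper assembles all components into the connected multi-fusion category $\operatorname{Fun}_{\cC_1}\bigl(\bigoplus_x\cC_x,\bigoplus_y\cC_y\bigr)=\bigoplus_{x,y}\Hom_{\cC_1}(\cC_x,\cC_y)$, identifies $\Hom_{\cC_1}(\cC_y,\cC_x)$ with $\cC_{y^{-1}x}$, and cites Proposition~2.17 of \cite{ENO10} (all components of a connected multi-fusion category have equal global dimension) to conclude $\dim(\cC_g)=\dim(\cC_1)$ in one step. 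You instead use the monoidal structure $\cC_g\boxtimes_{\cC_1}\cC_h\simeq\cC_{gh}$ together with multiplicativity of the global dimension under $\boxtimes_{\cC_1}$, and then the pleasant observation that $g\mapsto\dim(\cC_g)/\dim(\cC_1)$ is a homomorphism from a finite group into the torsion-free group $\R_{>0}$, hence trivial (your duality check $\dim(\cC_g)=\dim(\cC_{g^{-1}})$ even makes the homomorphism argument dispensable). The one point to be careful about is the status of the multiplicativity formula: as you yourself note, for invertible bimodules it is \emph{equivalent} to the statement $\dim(\cM)=\dim(\cC_1)$ that you are trying to prove, so your first proposed justification (via $\cM\boxtimes_{\cC_1}\cM^{\mathrm{op}}\simeq\cC_1$) does not discharge the burden; you must fall back on your second route, Morita invariance of the global dimension in the sense of \cite{ENO05,ENO10} --- which is in substance the same external input (Proposition~2.17 of \cite{ENO10}) that the paper invokes directly. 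So the two proofs converge on the same citation; yours buys a cleaner group-theoretic finish at the cost of an extra reduction, while the paper's is shorter because the multi-fusion packaging delivers the equality of all component dimensions at once. Your identification of the intrinsic squared norm of a simple $X\in\cC_g$ with its module-categorical one is indeed routine, as you say.
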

\begin{proof}
Consider the $\cC_1$-module category $\smash{\bigoplus_{g\in G}\cC_g}$ and the connected multi-fusion category
\begin{equation*}
\operatorname{Fun}_{\cC_1}\biggl(\,\bigoplus_{g \in G} \cC_g, \bigoplus_{h \in G} \cC_h\biggr) = \bigoplus_{g,h\in G} \Hom_{\cC_1}(\cC_g, \cC_h).
\end{equation*}
Now, each $\cC_g$ is an invertible $\cC_1$-bimodule; see Theorem~6.1 of \cite{ENO10}. Therefore, $\smash{\Hom_{\cC_1}(\cC_g, \cC_g) \cong \cC_1}$ as fusion categories, and $\smash{\cC_{h^{-1}g} = \Hom_{\cC_1}(\cC_h, \cC_g)}$ as a $\cC_1$-bimodule category. It then follows from Proposition~2.17 in \cite{ENO10} that the global dimension of each $\cC_g$ is $\dim(\cC_1)$. Therefore,
\begin{equation*}
\dim(\cC) = \sum_{g \in G} \dim(\cC_g) = |G| \dim(\cC_1).\qedhere
\end{equation*}
\end{proof}

We now relate the pseudo-unitarity of a (braided) $G$-graded ($G$-crossed) fusion category to that of its neutral component and that of its equivariantisation.
\begin{cor}\label{cor_pseudo}
Let $G$ be a finite group.
\begin{enumerate}[wide]
\item Let $\cC$ be a faithfully $G$-graded fusion category. Then $\cC$ is pseudo-unitary if and only if the neutral component $\cC_1$ is pseudo-unitary.
\item Suppose that $\cC$ is a fusion category with a $G$-action. Then the equivariantisation satisfies $\dim(\cC \sslash G) = |G| \dim(\cC)$. Hence, $\cC \sslash G$ is pseudo-unitary if and only if $\cC$ is pseudo-unitary.
\item Let $\cB$ be a nondegenerate braided fusion category with a $G$-action by braided tensor autoequivalences. Then the associated (in the sense of \autoref{thm_ourENO_braided}) nondegenerate braided fusion categories $\cC\sslash G$ are pseudo-unitary if and only if $\cB=\cC_1$ is pseudo-unitary.
\end{enumerate}
\end{cor}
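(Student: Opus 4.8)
The plan is to reduce all three statements to the definition of pseudo-unitarity (\autoref{def_pseudounitary}): a fusion category $\cD$ is pseudo-unitary exactly when $\operatorname{FPdim}(\cD)=\dim(\cD)$. Since both invariants are positive real numbers, any common multiplicative factor cancels in this equation, so it suffices to show that $\operatorname{FPdim}$ and $\dim$ scale by the \emph{same} factor $|G|$ under $G$-grading and under $G$-equivariantisation. For part (1), \autoref{prop_global dimension of grraded} already gives $\dim(\cC)=|G|\dim(\cC_1)$, while the Frobenius--Perron analogue (Proposition~8.20 of \cite{ENO05}) gives $\operatorname{FPdim}(\cC_g)=\operatorname{FPdim}(\cC_1)$ for all $g$, hence $\operatorname{FPdim}(\cC)=|G|\operatorname{FPdim}(\cC_1)$. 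Therefore $\operatorname{FPdim}(\cC)=\dim(\cC)$ holds if and only if $\operatorname{FPdim}(\cC_1)=\dim(\cC_1)$, which is precisely the claimed equivalence of pseudo-unitarity of $\cC$ and of $\cC_1$.

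Part (2) is the main point and needs the two identities $\dim(\cC\sslash G)=|G|\dim(\cC)$ and $\operatorname{FPdim}(\cC\sslash G)=|G|\operatorname{FPdim}(\cC)$; granting these, the equivalence follows by the same cancellation of $|G|$ as in part (1). The Frobenius--Perron identity is the standard behaviour of $\operatorname{FPdim}$ under (de-)equivariantisation. For the global-dimension identity I would not attack $\cC\sslash G$ directly but pass to the crossed product $\cC\rtimes G$, whose underlying abelian category is $\bigoplus_{g\in G}\cC$ with tensor product $(X\boxtimes\delta_g)\otimes(Y\boxtimes\delta_h)=(X\otimes g_*(Y))\boxtimes\delta_{gh}$. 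This is a faithfully $G$-graded fusion category with neutral component $\cC$, so \autoref{prop_global dimension of grraded} yields $\dim(\cC\rtimes G)=|G|\dim(\cC)$ at once. The crossed product and the equivariantisation are categorically Morita equivalent (their Drinfeld centres agree); since $\dim\cZ(\cD)=\dim(\cD)^2$ for every fusion category $\cD$, Morita-equivalent fusion categories have equal global dimension, and hence $\dim(\cC\sslash G)=\dim(\cC\rtimes G)=|G|\dim(\cC)$.

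Part (3) is then immediate by chaining the previous two. The braided $G$-crossed extension $\cC$ provided by \autoref{thm_ourENO_braided} is a faithfully $G$-graded fusion category with $\cC_1=\cB$, so part (1) shows $\cC$ is pseudo-unitary if and only if $\cB$ is; and $\cC\sslash G$ is the $G$-equivariantisation of $\cC$, so part (2) shows $\cC\sslash G$ is pseudo-unitary if and only if $\cC$ is. Composing the two equivalences gives that $\cC\sslash G$ is pseudo-unitary if and only if $\cB=\cC_1$ is.

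The hard part will be the global-dimension identity in part (2): unlike $\operatorname{FPdim}$, the global dimension is not manifestly multiplicative across the equivariantisation, and the clean route is the reduction to the graded case via the Morita equivalence $\cC\rtimes G\sim\cC\sslash G$ together with the centre formula $\dim\cZ(-)=\dim(-)^2$. The step demanding the most care is verifying that this Morita equivalence, and the comparison of the equivariantisation with the crossed product, hold in exactly the generality needed here (fusion categories over $\C$ equipped with the given $G$-action).
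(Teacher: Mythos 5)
Your proposal is correct and follows essentially the same route as the paper: part (1) by combining \autoref{prop_global dimension of grraded} with Proposition~8.20 of \cite{ENO05}, part (2) by passing to the crossed product $\cC\rtimes G$ and invoking its Morita equivalence with $\cC\sslash G$ together with the Morita-invariance of $\dim$ and $\operatorname{FPdim}$, and part (3) by chaining the first two. The only cosmetic difference is that you justify Morita-invariance of the global dimension via the centre formula $\dim\cZ(\cD)=\dim(\cD)^2$, whereas the paper cites it directly (Proposition~8.12 in \cite{ENO10}).
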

\begin{proof}
\begin{enumerate}[wide]
\item Proposition~8.20 in \cite{ENO05} implies $\operatorname{FPdim}(\cC) = |G| \operatorname{FPdim}(\cC_1)$. Then, by \autoref{prop_global dimension of grraded}, $\operatorname{FPdim}(\cC) = \dim(\cC)$ if and only if $\operatorname{FPdim}(\cC_1) = \dim(\cC_1)$.
\item If $G$ acts on $\cC$, the semidirect-product fusion category $\cC \rtimes G$ is $G$-graded and Morita equivalent to $\cC \sslash G$ (see \cite{Nik08}, Proposition~3.2). Since Frobenius-Perron dimension and global dimension are invariant under Morita equivalence (see Proposition 2.15 and Proposition 8.12 in \cite{ENO10}, respectively), it follows that $\dim(\cC \sslash G) = |G| \dim(\cC)$ and $\operatorname{FPdim}(\cC \rtimes G) = |G| \operatorname{FPdim}(\cC)$.
\item This follows directly from the previous items.\qedhere
\end{enumerate}
\end{proof}

\begin{rem}
In particular, any gauging of a pseudo-unitary fusion category is again pseudo-unitary. In this paper, we are primarily interested in gauging pointed braided fusion categories, which are indeed pseudo-unitary.
\end{rem}

Below, in \autoref{cor_TYpseudounitary} and \autoref{thm_evenTY}, we construct certain braided $G$-crossed extensions and also write down a pseudo-unitary $G$-ribbon element, which we then know is unique. More systematic would be a version of the above classification statement, \autoref{thm_ourENO_braided}, that inherently uses the setting of a ribbon category with ribbon autoequivalences (see \autoref*{GLM1problem_RibbonENO} in \cite{GLM24a}).


\section{\texorpdfstring{$\Z_2$}{Z\_2}-Crossed Extensions of Tambara-Yamagami Type}\label{sec_TY}

We give a first nontrivial example of a braided $G$-crossed extension, namely one for which the tensor structure is of Tambara-Yamagami type. The example is also studied in greater detail in \autoref*{GLM1sec_TY} of \cite{GLM24a}. In \autoref{sec_TYEven}, we shall then generalise this to allow for more than one simple object in the nontrivial graded component.

From now on, let $G=\langle g\rangle\cong\Z_2$. Tambara and Yamagami classified all $\Z_2$-graded fusion categories in which all but one of the simple objects are invertible \cite{TY98}. The braided $G$-crossed structures on these were classified in \cite{Gal22}.
\begin{defi}
Let $\Gamma$ be a finite, abelian group, $\sigma\colon\Gamma\times\Gamma\to\C^\times$ a symmetric, nondegenerate bimultiplicative form and $\eps\in\{\pm1\}$ a sign choice. The \emph{Tambara-Yamagami category} $\TY(\Gamma,\sigma,\eps)$ is the semisimple $\Z_2$-graded tensor category
\begin{equation*}
\TY(\Gamma,\sigma,\eps) = \Vect_\Gamma \oplus \Vect
\end{equation*}
with simple objects $\C_a$, $a\in \Gamma$, and $\X$, fusion rules
\begin{equation*}
\C_a\otimes \C_b=\C_{a+b},\quad
\C_a\otimes \X=\X\otimes \C_a =\X,\quad
\X\otimes \X=\bigoplus_{t\in\Gamma} \C_t
\end{equation*}
for $a,b\in\Gamma$ and the following nontrivial associators
{\allowdisplaybreaks
\begin{align*}
(\C_a\otimes \X)\otimes \C_b
&=X
\overset{\sigma(a,b)}{\xrightarrow{\hspace*{2.97cm}}}
\X=\C_a\otimes (\X\otimes \C_b),\\
(\X\otimes \C_a)\otimes \X
&=\bigoplus_{t\in \Gamma} \C_t
\overset{\sigma(a,t)}{\xrightarrow{\hspace*{1.6cm}}}
\bigoplus_{t\in \Gamma} \C_t
=\X\otimes (\C_a\otimes \X),\\
(\X\otimes \X)\otimes \X
&=\bigoplus_{t\in \Gamma} \X
\overset{\eps|\Gamma|^{-1/2}\sigma(t,r)^{-1}}{\xrightarrow{\hspace*{1.775cm}}}
\bigoplus_{r\in \Gamma}\X
=\X\otimes (\X\otimes \X).
\end{align*}
}%
\end{defi}

There is a rigid structure on $\TY(\Gamma,\sigma,\eps)$ with (left) dual objects $\smash{\C_a^*=\C_{-a}}$ and $\X^*=\X$, and $\smash{\coeval_\X=\iota_{\C_0}}$ and $\smash{\eval_\X=\eps|\Gamma|^{1/2}\pi_{\C_0}}$, where $\iota_{\C_t}$ and $\pi_{\C_t}$ are the canonical embeddings and projections, respectively, for the direct sum $\X\otimes\X=\bigoplus_{t\in\Gamma}\C_t$, and otherwise the obvious choices. The two zigzag identities relating $\eval_\X$ and $\coeval_\X$ hold as both the associator and inverse associator on $\X\otimes \X\otimes\X$ at $t=0$ are $\smash{\eps|\Gamma|^{-1/2}}$.

\medskip

We then upgrade the above categories to braided $\Z_2$-crossed tensor categories. To this end, we first need to introduce the following Gauss sum. Given a finite, abelian group $\Gamma$ and a nondegenerate quadratic form $q\colon\Gamma\to\C^\times$, we define
\begin{equation}\label{eq_gauss}
G(\Gamma,q^{-1})\coloneqq|\Gamma|^{-1/2}\sum_{a\in \Gamma} q(a)^{-1}=\e(\sign(\Gamma,q^{-1})/8)=\e(-\sign(\Gamma,q)/8),
\end{equation}
where $\sign$ denotes the signature (a number in $\Z_8$) of a discriminant form.

\begin{thm}[\cite{Gal22}, Lemma~4.8, Theorem~4.9, Proposition~4.12]\label{thm_TYribbon}
Consider the Tambara-Yamagami category $\TY(\Gamma,\sigma,\eps)$ and $G = \langle g \rangle \cong \Z_2$.
\begin{enumerate}
\item There are two actions of $G$ on $\TY(\Gamma,\sigma,\eps)$ such that the action on $\smash{\Vect_\Gamma}$ is  $\smash{g_*\C_a=\C_{-a}}$ with trivial tensor structure $\smash{\tau^g_{\C_a,\C_b}=\id}$ and trivial composition $\smash{T_2^{g,g}(\C_a)=\id}$, $a,b\in\Gamma$. Namely, the actions $g_*\X = \X$ with trivial tensor structures $\smash{\tau^g_{\C_a,\X}=\tau^g_{\X,\C_a}=\tau^g_{\X,\X}=\id}$ and with composition structures $T_2^{g,g}(\C_a) = \id$ and $T_2^{g,g}(\X) = \pm \id$.
\smallskip
\item The nonstrict action $T_2^{g,g}(\X) = -\id$ does not admit a $\Z_2$-braiding.
\smallskip
\item For the strict $\Z_2$-action, the $\Z_2$-braidings are in bijection with pairs $(q,\alpha)$ of a nondegenerate quadratic form $q\colon \Gamma \to \C^\times$ with associated bimultiplicative form $\sigma(a,b) = q(a+b) q(a)^{-1} q(b)^{-1}$ and a choice $\alpha$ of square root of
\begin{equation*}
\alpha^2 = \eps\, G(\Gamma,q^{-1}).
\end{equation*}
Then the $\Z_2$-braiding is, for $a,b\in\Gamma$,
{\allowdisplaybreaks
\begin{align*}
\C_a \otimes \C_b
&= \C_{a+b}
\overset{\sigma(a,b)}{\xrightarrow{\hspace*{1.27cm}}}
\C_{a+b} = \C_a \otimes \C_b,\\
\C_a \otimes \X
&= \X
\overset{q(a)^{-1}}{\xrightarrow{\hspace*{2.32cm}}}
\X = \X \otimes \C_a,\\
\X \otimes \C_a
&= \X
\overset{q(a)^{-1}}{\xrightarrow{\hspace*{2.32cm}}}
\X = \C_{-a} \otimes \X,\\
\X \otimes \X
&= \bigoplus_{t \in \Gamma} \C_t
\overset{\alpha \, q(t)}{\xrightarrow{\hspace*{0.86cm}}}
\bigoplus_{t \in \Gamma} \C_t = \X \otimes \X.
\end{align*}
}%
\item The $\Z_2$-ribbon structures for a given $(q,\alpha)$ are in correspondence with a choice of $\beta = \pm \alpha^{-1}$. Then the ribbon twist is, for $a\in\Gamma$,
\begin{equation*}
\C_a\overset{q(a)^{2}}{\xrightarrow{\hspace*{1cm}}}\C_a\quad\text{and}\quad\X\overset{\beta}{\xrightarrow{\hspace*{1cm}}}\X.
\end{equation*}
\end{enumerate}
We denote by $\TY(\Gamma,\sigma,\eps\,|\,q,\alpha,\beta)$ the $\Z_2$-crossed ribbon Tambara-Yamagami category associated with these data.
\end{thm}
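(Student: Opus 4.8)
The plan is to verify directly the defining axioms of a braided $\Z_2$-crossed (ribbon) tensor category, exploiting that for $G=\langle g\rangle\cong\Z_2$ every structure morphism in sight is a scalar: the objects are the invertible $\C_a$ and the single fixed object $\X=g_*\X$, so each coherence diagram collapses to a scalar equation once evaluated on simple objects, and the whole proof becomes a matter of writing down these equations and solving them. For part~(1), since $g_*$ is prescribed on objects by $\C_a\mapsto\C_{-a}$, $\X\mapsto\X$ and $\tau^g$ is trivial, the tensor-structure coherence (first diagram of \autoref{def_Gcrossed}) reduces to the requirement that $g_*$ preserve each Tambara-Yamagami associator. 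On the $\C_a\X\C_b$- and $\X\C_a\X$-associators this is the identity $\sigma(-a,-b)=\sigma(a,b)$, valid by bimultiplicativity; on the $\X\X\X$-associator $\eps|\Gamma|^{-1/2}\sigma(t,r)^{-1}$ one must additionally track that $g_*$ reindexes the internal sum $\X\otimes\X=\bigoplus_t\C_t$ by $t\mapsto-t$, and symmetry of $\sigma$ renders this consistent. The associativity constraint on $T_2$ and its compatibility with $\tau^g$ collapse, using $g^2=1$ and unit normalisation, to $T_2^{g,g}(g_*X)=g_*(T_2^{g,g}(X))$, which holds for both $T_2^{g,g}(\X)=\pm\id$; this yields exactly the two actions.

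For part~(3), I would parametrise a candidate $\Z_2$-braiding by scalars $c_{\C_a,\X}=\lambda_a$, $c_{\X,\C_a}=\mu_a$, $c_{\X,\X}|_{\C_t}=\nu_t$ and an as-yet-undetermined braiding $c_{\C_a,\C_b}$ on $\cC_1$, and feed these into the conditions (a)--(c) of the braided $\Z_2$-crossed definition. The hexagon diagrams (c), applied to mixed triples of $\C_a$'s and $\X$, thread the Tambara-Yamagami associator $\sigma(a,b)$ through the braidings and force simultaneously $c_{\C_a,\C_b}=\sigma(a,b)$ and a quadratic-refinement constraint $\lambda_a=q(a)^{-1}$ for a nondegenerate quadratic form $q$ with $B_q=\sigma$; the crossed-compatibility diagram (b), together with the action $g_*\C_a=\C_{-a}$ and naturality, then gives $\mu_a=q(a)^{-1}$. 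The remaining equation is the $\X\X\X$-instance of the second hexagon in (c), in which $T_2^{g,g}(\X)=+\id$ appears: comparing the two paths, one forces $\nu_t=\alpha\,q(t)$ and, upon summing the associator scalars over $\Gamma$, the single normalisation $\alpha^2=\eps\,G(\Gamma,q^{-1})$ with the Gauss sum of \eqref{eq_gauss}. This reproduces the claimed bijection with pairs $(q,\alpha)$.

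Part~(2) is then the same computation with the sign of $T_2^{g,g}(\X)$ flipped: the factor $T_2^{g,g}(\X)=-\id$ enters exactly the $\X\X\X$-equation that pinned down $\nu_t$, turning the solvable normalisation of part~(3) into a contradiction, so no consistent scalars $\nu_t$ exist. For part~(4), I would write a $G$-twist as a scalar $\theta_{\C_a}$ together with $\theta_\X=\beta$, impose diagrams \eqref{eq_G-twist} and \eqref{eq_G-invarianza} to obtain $\theta_{\C_a}=q(a)^2$ (equivalently the self-braiding $\sigma(a,a)$) and to fix $\beta$ up to the residual freedom, and finally impose the self-duality diagram \eqref{eq_selft-duality} on the self-dual object $\X$. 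Here the rigid data $\coeval_\X=\iota_{\C_0}$ and $\eval_\X=\eps|\Gamma|^{1/2}\pi_{\C_0}$ enter, and evaluating on the $\C_0$-component yields $\beta^2=\alpha^{-2}$, hence $\beta=\pm\alpha^{-1}$.

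The main obstacle is the $\X\otimes\X\otimes\X$ coherence underlying parts~(2) and~(3): this is where the $\Z_2$-braiding of $\X$ interacts with the nontrivial associator $\eps|\Gamma|^{-1/2}\sigma(t,r)^{-1}$, the sum over $\Gamma$ produces the (nondegenerate) Gauss sum, and the precise normalisation $\alpha^2=\eps\,G(\Gamma,q^{-1})$---including how $\eps$ and the sign $T_2^{g,g}(\X)$ enter---must be extracted correctly to distinguish the strict case from the nonstrict one. Equally delicate is the bookkeeping of the reindexing $t\mapsto-t$ that $g_*$ induces on the internal decomposition $\X\otimes\X=\bigoplus_t\C_t$, which silently pervades every diagram involving $\X\otimes\X$ and must be kept consistent with the chosen embeddings and projections $\iota_{\C_t},\pi_{\C_t}$.
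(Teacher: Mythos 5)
Your overall strategy --- reducing every coherence diagram to a scalar equation on simple objects and then solving --- is the right one: it is how this theorem is proved in \cite{Gal22}, and it is also exactly the method the paper itself uses in \autoref{sec_app} to prove the generalisation \autoref{thm_evenTY} (the paper gives no independent proof of \autoref{thm_TYribbon} beyond the citation). Your identification of the $\X\otimes\X\otimes\X$ hexagons as the place where $\eps$, the Gauss sum $G(\Gamma,q^{-1})$ and the sign of $T_2^{g,g}(\X)$ interact is the correct focal point for parts (2) and (3), and the parametrise-and-solve scheme does address both directions of the claimed bijection.

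Two of your derivations, however, rest on the wrong diagram and would not go through as written. First, in part (1) the equation you extract, $T_2^{g,g}(g_*X)=g_*(T_2^{g,g}(X))$, is satisfied by \emph{every} scalar value of $T_2^{g,g}(\X)$ (both sides are that same scalar), so it cannot ``yield exactly the two actions''. The constraint $T_2^{g,g}(\X)^2=\id$ comes instead from the second diagram of \autoref{def_Gcrossed} (compatibility of $T_2$ with $\tau$) evaluated at $X=Y=\X$ with $gh=1$: since $\X\otimes\X=\bigoplus_{t\in\Gamma}\C_t$ and $T_2^{g,g}(\C_t)=\id$, the left-hand column is the identity while the right-hand column is $T_2^{g,g}(\X)\otimes T_2^{g,g}(\X)$, forcing the square to be $1$. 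Second, in part (4) the self-duality diagram \eqref{eq_selft-duality} applied to the self-dual object $\X$ with $\theta_{\X^*}=\theta_\X=\beta$ is essentially vacuous (both composites equal $\beta\cdot\eval_\X$ for either sign of $\beta$); it explains why both signs give $G$-ribbons but does not produce the relation $\beta^2=\alpha^{-2}$. That relation comes from the $G$-twist axiom \eqref{eq_G-twist} applied to $\X\otimes\X$: on the component $\C_t$ the left-hand path gives $\theta_{\C_t}=q(t)^2$, while the right-hand path gives $\beta^2$ times the double braiding $(\alpha\,q(t))^2$, whence $\beta^2\alpha^2=1$. With these two constraints relocated to the correct diagrams, your sketch is sound.
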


Note that because $\alpha^2=\eps\,\e(-\sign(\Gamma,q)/8)$ by equation~\eqref{eq_gauss}, it follows that $\alpha$ and $\beta$ are $16$-th roots of unity, and in particular $|\alpha|=|\beta|=1$. Moreover, $\alpha\beta\in\{\pm1\}$ by definition.

\begin{rem}\label{rem_alphaEquivalence}
As we explain in more detail in \autoref*{GLM1rem_alphaEquivalence} of \cite{GLM24a}, the two choices of $\alpha$ yield equivalent $G$-crossed ribbon categories $\TY(\Gamma,\sigma,\eps\,|\,q,\alpha,\beta)$.
\end{rem}

\begin{rem}\label{rem_TYext}
$\TY(\Gamma,\sigma,\eps\,|\,q,\alpha,\beta)$ is a braided $\Z_2$-crossed extension of the braided tensor category $\smash{\Vect_\Gamma^{q^2}}\cong\smash{\Vect_{\Gamma}^{\sigma,1}}$ (with $\sigma$ symmetric) discussed in \autoref{ex_braidedVect}, with ribbon structure given by $\theta_{\C_a}=q(a)^2$. In other words, in order to define a braided $\Z_2$-crossed extension of Tambara-Yamagami type, we need to, among other things, choose a square root of a quadratic form. (But note that the quadratic form $q^2$ on $\Gamma$ may be degenerate if $|\Gamma|$ is even.)
\end{rem}

\begin{rem}\label{rem_TY}
A symmetric, nondegenerate bimultiplicative form $\sigma\colon \Gamma\times \Gamma\to\C^\times$ admits $|\Gamma/2\Gamma|$ many choices of quadratic form $q\colon \Gamma\to\C^\times$ with associated bimultiplicative form $\sigma$.
\end{rem}

There is a somewhat natural choice of the sign $\alpha\beta$. Indeed, the following assertion is not difficult to verify, but see \autoref*{GLM1cor_TYpseudounitary} in \cite{GLM24a} for a proof.
\begin{prop}\label{cor_TYpseudounitary}
The quantum dimensions of the above $\Z_2$-crossed ribbon fusion category $\TY(\Gamma,\sigma,\eps\,|\,q,\alpha,\beta)$ coincide with the Frobenius-Perron dimensions if and only if $\alpha \beta = \eps$.
\end{prop}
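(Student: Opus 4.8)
The plan is to compute the quantum dimensions of the simple objects directly from the rigid and $\Z_2$-ribbon structures recorded in \autoref{thm_TYribbon} and compare them with the Frobenius-Perron dimensions. Since the $\C_a$ are invertible, $\operatorname{FPdim}(\C_a)=1$, while $\X\otimes\X=\bigoplus_{t\in\Gamma}\C_t$ forces $\operatorname{FPdim}(\X)=|\Gamma|^{1/2}$. Hence the quantum and Frobenius-Perron dimensions agree on all simple objects precisely when $\qdim(\C_a)=1$ for all $a\in\Gamma$ and $\qdim(\X)=|\Gamma|^{1/2}$, and it remains to identify when these hold.

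First I would dispose of the invertible objects. On the neutral component $\cC_1=\smash{\Vect_\Gamma^{q^2}}$ the $\Z_2$-ribbon restricts to the twist $\theta_{\C_a}=q(a)^2=Q(a)$, which is exactly the canonical ribbon structure on $\smash{\Vect_\Gamma^Q}$ from \autoref{ex_braidedVect}. As discussed there, this is the unique ribbon structure for which every $\C_a$ has quantum dimension~$1$, so $\qdim(\C_a)=1=\operatorname{FPdim}(\C_a)$ independently of the choice of $\alpha$ and $\beta$. Thus the only possible constraint comes from the object $\X$ in the nontrivial graded component.

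Next I would compute $\qdim(\X)$. Since $\X\in\cC_g$ with $g$ of order~$2$, we have $g_*\X=\X$, the $\Z_2$-twist $\theta_\X\colon\X\to g_*\X=\X$ is the scalar $\beta$, and the $\Z_2$-braiding $c_{\X,\X}\colon\X\otimes\X\to g_*\X\otimes\X=\X\otimes\X$ acts on the summand $\C_t$ of $\X\otimes\X=\bigoplus_{t\in\Gamma}\C_t$ by $\alpha\,q(t)$. The quantum dimension is the quantum trace of the identity, which, using the rigid structure $\coeval_\X=\iota_{\C_0}$ and $\eval_\X=\eps|\Gamma|^{1/2}\pi_{\C_0}$, takes the form
\begin{equation*}
\qdim(\X)=\eval_\X\circ c_{\X,\X}\circ(\theta_\X\otimes\id_\X)\circ\coeval_\X.
\end{equation*}
Reading this off the summands is immediate: $\coeval_\X$ lands in the $t=0$ component, $\theta_\X\otimes\id_\X$ contributes the scalar $\beta$, $c_{\X,\X}$ contributes $\alpha\,q(0)=\alpha$ on that component, and $\eval_\X$ contributes $\eps|\Gamma|^{1/2}$, so that $\qdim(\X)=\eps\,\alpha\beta\,|\Gamma|^{1/2}$.

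Comparing with $\operatorname{FPdim}(\X)=|\Gamma|^{1/2}>0$ and recalling that $\eps,\alpha\beta\in\{\pm1\}$, we obtain $\qdim(\X)=\operatorname{FPdim}(\X)$ if and only if $\eps\,\alpha\beta=1$, i.e.\ $\alpha\beta=\eps$. Together with the second paragraph this proves the claim. The point requiring genuine care — and the main obstacle — is pinning down the correct quantum-trace formula in the $\Z_2$-crossed ribbon setting, in particular the placement of the twist $\theta_\X$ and the use of the crossed braiding $c_{\X,\X}$ in place of an ordinary self-braiding, together with verifying that the two $\Z_2$-ribbon conditions in \eqref{eq_selft-duality} force the left and right quantum traces to agree so that $\qdim(\X)$ is well defined; once this is settled, evaluation at $t=0$, where $q(0)=1$, is trivial.
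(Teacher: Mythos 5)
Your argument is correct and is essentially the paper's own: the paper (deferring the detailed check to the companion article, but reproducing the identical computation for the generalised category at the end of \autoref{sec_app}) also evaluates $\qdim(\X)=\eval_\X\circ c_{\X,\X^*}\circ(\theta_\X\otimes\id_{\X^*})\circ\coeval_\X=\eps\,\alpha\beta\,|\Gamma|^{1/2}$ using $\coeval_\X=\iota_{\C_0}$, $\eval_\X=\eps|\Gamma|^{1/2}\pi_{\C_0}$ and $q(0)=1$, and compares with $\operatorname{FPdim}(\X)=|\Gamma|^{1/2}$. Your additional observations — that the invertible objects impose no constraint and that $\eps,\alpha\beta\in\{\pm1\}$ makes the comparison immediate — are exactly the points needed.
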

In particular, $\TY(\Gamma,\sigma,\eps\,|\,q,\alpha,\beta)$ is pseudo-unitary; see also \autoref{cor_pseudo}. By the definition of $\alpha$ and $\beta$, the condition $\alpha\beta=\eps$ is equivalent to the equation $\beta/\alpha=\e(\sign(\Gamma,q)/8)$.

\begin{prop}[\cite{GNN09}, Proposition~5.1]\label{prop_TY_modular}
The equivariantisation of the braided $\Z_2$-crossed tensor category $\TY(\Gamma,\sigma,\eps\,|\,q,\alpha,\beta)\sslash\Z_2$ has a nondegenerate braiding and is hence a modular tensor category if and only if $|\Gamma|$ is odd.
\end{prop}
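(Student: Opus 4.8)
The plan is to reduce modularity of $\cC\sslash\Z_2$ to the nondegeneracy of its neutral component and then to translate this into an elementary divisibility statement. First I would observe that $\cC=\TY(\Gamma,\sigma,\eps\,|\,q,\alpha,\beta)$ is a $\Z_2$-crossed ribbon fusion category, so its equivariantisation is again a fusion category, and by \autoref{lem_ribbon} the given $\Z_2$-ribbon descends to an honest ribbon structure on $\cC\sslash\Z_2$. Hence $\cC\sslash\Z_2$ is a ribbon fusion category, and it is modular precisely when its braiding is nondegenerate. By the remark in \autoref{sec_Gcrossed}, the braiding of $\cC\sslash\Z_2$ is nondegenerate if and only if the neutral component $\cC_1$ has nondegenerate braiding, so everything comes down to analysing $\cC_1$.

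Next I would identify $\cC_1$ explicitly. By \autoref{rem_TYext}, $\cC_1\cong\smash{\Vect_\Gamma^{\sigma,1}}$ is the pointed braided fusion category whose braiding is the symmetric bimultiplicative form $\sigma$. As recalled in \autoref{ex_braidedVect}, the nondegeneracy of such a braiding is governed by the associated bimultiplicative (double-braiding) form, which here is
\begin{equation*}
B(a,b)=\sigma(a,b)\,\sigma(b,a)=\sigma(a,b)^2,
\end{equation*}
using that $\sigma$ is symmetric. Thus $\cC_1$ is nondegenerate if and only if $\sigma^2$ is nondegenerate.

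Finally, I would settle nondegeneracy of $\sigma^2$ by a short computation. Nondegeneracy of a bimultiplicative form means that the induced homomorphism $\Gamma\to\widehat\Gamma$, $a\mapsto\sigma^2(a,-)$, is injective (equivalently an isomorphism, as $|\Gamma|=|\widehat\Gamma|$). Bimultiplicativity gives $\sigma^2(a,-)=\sigma(2a,-)$, so this map is the composite of multiplication by $2$ on $\Gamma$ with the isomorphism $\Gamma\xrightarrow{\sim}\widehat\Gamma$ induced by the nondegenerate form $\sigma$. Hence $\sigma^2$ is nondegenerate if and only if multiplication by $2$ is an automorphism of $\Gamma$, i.e.\ if and only if $\Gamma$ has trivial $2$-torsion, which is exactly the condition that $|\Gamma|$ is odd; in the even case the radical of $\sigma^2$ is the nontrivial subgroup $\Gamma[2]$. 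Combining the three steps yields the claim. The only nonroutine input is the reduction in the first paragraph to the neutral component, which I would cite rather than reprove; the remaining arguments are elementary, and the heart of the matter is simply that doubling kills precisely the $2$-torsion of $\Gamma$.
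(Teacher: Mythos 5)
Your argument is correct. The paper itself only cites \cite{GNN09} for this proposition, but your reduction to the nondegeneracy of the neutral component (via \cite{DGNO10}, using that the grading is faithful) is exactly the strategy the paper employs for the analogous generalised statement in \autoref{prop_equimod}, and your remaining computation — that the double braiding $\sigma^2(a,\cdot)=\sigma(2a,\cdot)$ has radical equal to the $2$-torsion of $\Gamma$, so $\cC_1\cong\Vect_\Gamma^{q^2}$ is nondegenerate precisely when $|\Gamma|$ is odd — is the correct elementary input matching the caveat in \autoref{rem_TYext}.
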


Because it is often natural to demand that the equivariantisation of a braided $G$-crossed tensor category be modular, e.g.\ in the context of \voa{}s (cf.\ \cite{GLM24a}), the special case of $\Gamma$ of odd order is the most interesting one.

\medskip

Assume in the following that $|\Gamma|$ is odd. Then, as explained in \autoref{ex_braidedVect}, for any nondegenerate quadratic form $Q$ on $\Gamma$, the equivalence class $\smash{\Vect_\Gamma^Q}$ has a distinguished representative $\smash{\Vect_\Gamma^{\sigma,1}}$, for which the representing abelian $3$-cocycle $(\sigma,\omega)$ has trivial associator $\omega=1$ and braiding given by the symmetric bimultiplicative form $\smash{\sigma=B_Q^{1/2}}$.

Hence, by \autoref{rem_TYext}, the Tambara-Yamagami categories $\TY(\Gamma,\sigma,\eps\,|\,q,\alpha,\beta)$ are braided $\Z_2$-crossed extensions of $\smash{\Vect_\Gamma^Q}$, concretely represented by $\smash{\Vect_\Gamma^{\sigma,1}}$. Since $\Gamma$ is of odd order, there is a unique quadratic form $q$ with associated bimultiplicative form $\smash{B_q=\sigma}$ (see \autoref{rem_TY}), and this quadratic form must coincide with the unique square root of $Q$, i.e.\ $\smash{q=Q^{1/2}}$.

Also, since $|\Gamma|$ is odd, $\smash{\Vect_\Gamma^Q}$ has a unique ribbon structure (cf.\ \autoref{sec_GRibbon}). The ribbon twist is given by $\theta_{\C_a}=Q(a)=q(a)^2$ for $a\in\Gamma$, and with this ribbon structure the quantum dimensions agree with the Frobenius-Perron dimensions; in particular, $\smash{\Vect_\Gamma^Q}$ is pseudo-unitary. Hence, the $\Z_2$-crossed ribbon structure on $\TY(\Gamma,\sigma,\eps\,|\,q,\alpha,\beta)$ is an extension of the ribbon structure on $\smash{\Vect_\Gamma^Q}$, and yields coinciding quantum and Frobenius-Perron dimensions if and only if $\beta=\eps/\alpha$ by \autoref{cor_TYpseudounitary}.

Summarising the above considerations, we obtain the following statement:
\begin{cor}\label{cor_TY}
Suppose $|\Gamma|$ is odd. The modular tensor category $\smash{\Vect_\Gamma^Q}$ with strict $\Z_2$-action $g_*\C_a=\C_{-a}$ has two braided $\Z_2$-crossed extensions with positive quantum dimensions, namely, for $\eps=\pm 1$,
\begin{equation*}
\Vect_\Gamma^Q[\Z_2,\eps] \coloneqq \TY(\Gamma,B_Q^{1/2},\eps \mid Q^{1/2}, \alpha,\eps/\alpha).
\end{equation*}
Here, $\alpha$ is one of the solutions of $\alpha^2=\eps\,G(\Gamma,q^{-1})=\eps\,\e(-\sign(\Gamma,q)/8)$, which both give equivalent extensions. The equivariantisations $\smash{\Vect_\Gamma^Q[\Z_2,\eps]\sslash\Z_2}$ are again modular.
\end{cor}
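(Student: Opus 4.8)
The plan is to read off the statement by specialising the results already assembled in this section to the case $|\Gamma|$ odd. First I would fix the distinguished representative of the neutral sector: since $|\Gamma|$ is odd, \autoref{ex_braidedVect} furnishes an abelian $3$-cocycle $(\sigma,1)$ with $\sigma=B_Q^{1/2}$ symmetric and nondegenerate and $\smash{\Vect_\Gamma^Q\cong\Vect_\Gamma^{\sigma,1}}$ as braided tensor categories. By \autoref{rem_TYext}, every Tambara-Yamagami category $\TY(\Gamma,\sigma,\eps\mid q,\alpha,\beta)$ whose square-root datum obeys $q^2=Q$ is a braided $\Z_2$-crossed extension of $\smash{\Vect_\Gamma^Q}$, carrying the prescribed strict action $g_*\C_a=\C_{-a}$; this yields existence.

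Next I would count the remaining data. Oddness of $|\Gamma|$ gives $|\Gamma/2\Gamma|=1$, so by \autoref{rem_TY} the quadratic form $q$ with $B_q=\sigma$ is unique, namely $q=Q^{1/2}$. For this $q$, \autoref{thm_TYribbon}(3) determines the braiding parameter $\alpha$ up to the two square roots of $\eps\,G(\Gamma,q^{-1})$, and \autoref{rem_alphaEquivalence} says the two choices yield equivalent extensions, so $\alpha$ contributes no further freedom. Among the ribbon structures $\beta=\pm\alpha^{-1}$ of \autoref{thm_TYribbon}(4), positivity of all quantum dimensions forces $\beta=\eps/\alpha$ by \autoref{cor_TYpseudounitary}; positivity on the neutral sector is automatic, since $\theta_{\C_a}=Q(a)=q(a)^2$ exhibits $\smash{\Vect_\Gamma^Q}$ as pseudo-unitary. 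Hence the only surviving parameter is the sign $\eps\in\{\pm1\}$, giving $\smash{\Vect_\Gamma^Q[\Z_2,\eps]}=\TY(\Gamma,B_Q^{1/2},\eps\mid Q^{1/2},\alpha,\eps/\alpha)$ for each $\eps$.

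It remains to confirm that these two extensions are distinct and exhaust all such extensions. Inequivalence of the $\eps=+1$ and $\eps=-1$ categories follows because $\eps$ is a categorical invariant of the underlying Tambara-Yamagami fusion category (reflected, e.g., in the twist $\theta_\X=\beta$ of the noninvertible simple object and in the modular data of the equivariantisation). For exhaustiveness I would invoke \autoref{thm_ourENO_braided}: once one extension exists the obstruction $\mathrm{O}_4$ vanishes and the equivalence classes form a torsor over $H^3(\Z_2,\C^\times)\cong\Z_2$, so there are exactly two; having exhibited two inequivalent ones, the list is complete. The unique pseudo-unitary ribbon on each is then pinned down by \autoref{prop_equi def pseudo-unitary} together with \autoref{cor_pseudo}(1). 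Finally, modularity of the equivariantisations $\smash{\Vect_\Gamma^Q[\Z_2,\eps]\sslash\Z_2}$ is immediate from \autoref{prop_TY_modular}, whose hypothesis holds since $|\Gamma|$ is odd.

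The main obstacle is not any single computation but ensuring the count collapses cleanly to two: one must track how the three reductions---fixing $q=Q^{1/2}$ (extension of $\smash{\Vect_\Gamma^Q}$), fixing $\beta=\eps/\alpha$ (positive quantum dimensions), and identifying the two values of $\alpha$ (\autoref{rem_alphaEquivalence})---together leave precisely the two-element freedom in $\eps$, and that this matches the abstract torsor count over $H^3(\Z_2,\C^\times)$. The genuinely structural input making this work is that oddness of $|\Gamma|$ renders both the square root $Q^{1/2}$ and the ribbon twist on the neutral sector unique.
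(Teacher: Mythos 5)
Your proposal is correct and follows essentially the same route as the paper: the paper's ``proof'' is precisely the discussion preceding the corollary, which fixes the distinguished representative $(\sigma,1)=(B_Q^{1/2},1)$, notes the uniqueness of $q=Q^{1/2}$ via \autoref{rem_TY}, fixes $\beta=\eps/\alpha$ via \autoref{cor_TYpseudounitary}, identifies the two signs of $\alpha$ via \autoref{rem_alphaEquivalence}, and cites \autoref{prop_TY_modular} for modularity of the equivariantisation. Your additional paragraph on distinctness and exhaustiveness via \autoref{thm_ourENO_braided} matches the remark the paper makes immediately after the corollary.
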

We can conclude that these are the unique two braided $\Z_2$-crossed extensions of $\smash{\Vect_\Gamma^Q}$ with the given $\Z_2$-action, in agreement with the predictions of \cite{ENO10}; see \autoref{thm_ourENO_braided}.

The equivariantisation $\smash{\Vect_\Gamma^Q}[\Z_2,\eps]\sslash\Z_2$ was described in detail in \cite{GNN09} and will be recovered as a special case of our generalisation in \autoref{sec_equiv}.

\begin{rem}\label{rem_Gauss}
If $|\Gamma|$ is odd, the Gauss sum $G(\Gamma,q^{-1})$ over $q^{-1}=Q^{-1/2}$ can be expressed in a simple way in terms of the signature of the discriminant form $\sign(\Gamma,Q)$ (see, e.g., \cite{Sch09}):
\begin{align*}
G(\Gamma,q^{-1})&=|\Gamma|^{-1/2}\sum_{a\in \Gamma}q(a)^{-1}
=\e(\sign(\Gamma,q^{-1})/8)\\
&=\left(\frac{2}{|\Gamma|}\right)G(\Gamma,Q^{-1})=\left(\frac{2}{|\Gamma|}\right)\e(-\sign(\Gamma,Q)/8),
\end{align*}
where $(\frac{\cdot}{\cdot})$ is the Kronecker symbol, and for $n$ odd $(\frac{2}{n})=(\frac{n}{2})=(-1)^{(n^2-1)/8}$. With the above formula, the ribbon twist eigenvalue on $\X$, up to an irrelevant sign, is
\begin{equation*}
\theta_\X=\beta=\eps/\alpha=\pm \left(\eps\left(\frac{2}{|\Gamma|}\right)\e(\sign(\Gamma,Q)/8)\right)^{1/2}.
\end{equation*}
\end{rem}


\section{\texorpdfstring{$\Z_2$}{Z\_2}-Crossed Extensions for Even Groups}\label{sec_TYEven}

In this section, as the main result of this work, we construct for $G=\langle g\rangle\cong\Z_2$ explicitly the braided $G$-crossed extension
\begin{equation*}
\Vect_\Gamma^Q[\Z_2,\eps]=\Vect_\Gamma\oplus\Vect_{\Gamma/2\Gamma}
\end{equation*}
of the (nondegenerate) braided fusion category $\smash{\Vect_\Gamma^Q}$ for a discriminant form $(\Gamma, Q)$ with a $\Z_2$-action given by $-\!\id$ on $\Gamma$. This includes, in particular, a description of the associators, $\Z_2$-braiding and $\Z_2$-ribbon structure.

Here, in contrast to the previously known results in \autoref{sec_TY}, the finite, abelian group $\Gamma$ may have even order, leading to new examples of braided $\Z_2$-crossed tensor categories. Through the process of equivariantisation, these also provide new examples of modular tensor categories (see \autoref{sec_equiv}).

As discussed in \autoref*{GLM1thm_latmain} of \cite{GLM24a}, these categories correspond to the representation categories of orbifolds of lattice \voa{}s under reflection automorphisms, whose Grothendieck rings were described in \cite{ADL05}.

\smallskip

The input data for the construction of these braided $G$-crossed categories are produced in \autoref*{GLM1sec_idea2_infTY} of \cite{GLM24a} by using the idea that braided $G$-crossed extensions commute in a certain sense with condensations (i.e.\ going to the local modules over a commutative, associative algebra object in a braided tensor category in the sense of \cite{Par95,KO02}). Concretely, this idea is applied to the situation
\begin{equation*}
\begin{tikzcd}
\Vect_{\R^d}^\DiscQ
\arrow[rightsquigarrow]{d}{\text{cond.}}
\arrow[hookrightarrow]{rrr}{\text{$\Z_2$-crossed ext.}}
&&&
\Vect_{\R^d}^\DiscQ \oplus \Vect
\arrow[rightsquigarrow]{d}{\text{cond.}}
\\
\Vect_\Gamma^Q
\arrow[hookrightarrow]{rrr}{\text{$\Z_2$-crossed ext.}}
&&&
\Vect^Q_\Gamma[\Z_2,\eps]
\end{tikzcd}
\end{equation*}
where $(\Gamma,Q)$ corresponds to an isotropic subgroup (meaning an even lattice~$L$) in the quadratic space $(\R^d,\DiscQ)$, in order to produce the braided $\Z_2$-crossed tensor category $\smash{\Vect_\Gamma^Q}[\Z_2,\eps]$ with all structures.

In principle, for this we need an infinite version of a Tambara-Yamagami category associated with the infinite, abelian group $\R^d$ with automorphism $-\!\id$, which should be $\smash{\Vect_{\R^d}^\DiscQ}\oplus\Vect$ as abelian category, but for which we cannot define a tensor product in the usual sense. (On the level of tensor categories, without braiding, this was solved in \cite{Mar25}, which appeared after the completion of this manuscript.)

Nonetheless, we use this approach in \cite{GLM24a} to very explicitly determine the data that \emph{should} define $\smash{\Vect_\Gamma^Q}[\Z_2,\eps]$. We then use this in \autoref{sec_evenTY} as input for a rigorous (but without \cite{GLM24a} ad hoc seeming) definition of a braided $\Z_2$-crossed tensor category $\LM(\Gamma,\sigma,\omega,\bar{\delta},\eps\,|\,q,\alpha,\beta)$. We prove that this is indeed a braided $\Z_2$-crossed extension of $\smash{\Vect_\Gamma^Q}$ and hence must coincide with $\smash{\Vect_\Gamma^Q}[\Z_2,\eps]$.

\smallskip

Finally, in \autoref{sec_latticedata} we realise the discriminant form $(\Gamma,Q)$ explicitly as dual quotient $\Gamma=L^*\!/L$ of some even lattice $L$. That is, we view $\smash{\Vect_\Gamma^Q=\Vect_\Gamma^{(\sigma,\omega)}}$ concretely as the condensation
\begin{equation*}
\Vect_{\R^d}^\DiscQ=\Vect_{\R^d}^{(\DiscS,\DiscO)}\leadsto\Vect_\Gamma^Q=\Vect_\Gamma^{(\sigma,\omega)}
\end{equation*}
of the infinite pointed braided fusion category $\smash{\Vect_{\R^d}^\DiscQ}$ by the even lattice $L$, as discussed in detail in \autoref*{GLM1sec_latticeDiscriminantForm} of \cite{GLM24a}. Then, we discuss how these lattice data produce the data used to define the braided $\Z_2$-crossed tensor category $\LM(\Gamma,\sigma,\omega,\bar{\delta},\eps\,|\,q,\alpha,\beta)$. If $L$ is positive-definite, the braided $\Z_2$-crossed tensor category will in this way appear for the $\Z_2$-orbifold of the corresponding lattice \voa{}, which we discuss in \cite{GLM24a}.


\subsection{Special Abelian Cocycles}\label{sec_specialcoc}

Recall from \autoref{ex_braidedVect} that for a quadratic form $Q$ over a finite, abelian group~$\Gamma$ (i.e.\ a discriminant form if $Q$ is nondegenerate), we can consider a representing abelian $3$-cocycle $(\sigma, \omega)$ on~$\Gamma$, namely such that $\sigma(a, a) = Q(a)$ for all $a \in \Gamma$. In the following, we aim to show that every pair $(\Gamma,Q)$ has a special representing abelian $3$-cocycle that is particularly suited for the purposes of this text.

\medskip

For the following discussion, we sometimes refer to the orthogonal decomposition of a discriminant form into its (indecomposable) \emph{Jordan components}. For notation and further details, we refer the reader to \autoref{tab_disc} and \cite{CS99,Sch09}.

\begin{defi}
Let $\Gamma$ be an abelian group. A normalised symmetric 2-cochain $\sigma\colon \Gamma \times \Gamma \to \C^\times$ is called \emph{abelian} if $\partial(\sigma)(a;b,c)\coloneqq\sigma(a,b)\sigma(a,c)\sigma(a,b+c)^{-1}\in \{\pm 1\}$ for $a,b,c\in\Gamma$ and the map
\begin{equation}\label{eq_hom}
\partial(\sigma)(\,\cdot\,;b,c)\colon \Gamma \to \{\pm 1\}, \quad a \mapsto \partial(\sigma)(a;b,c),
\end{equation}
is a group homomorphism for every pair $b, c \in \Gamma$. The subset of all abelian symmetric 2-cochains is an abelian subgroup of the 2-cochains and will be denoted by $C^2_{\mathrm{sym}}(\Gamma, \C^\times)$.
\end{defi}
Note that $\partial(\sigma)$ is by definition symmetric in the last two arguments.

\begin{lem}\label{lem_definition partial homomorphism}
If $\sigma \in C^2_{\mathrm{sym}}(\Gamma, \C^\times)$, then the pair $(\sigma, \partial(\sigma))$ is an abelian 3-cocycle. The map $C^2_{\mathrm{sym}}(\Gamma, \C^\times) \to Z^3_\mathrm{ab}(\Gamma, \C^\times)$, $\sigma \mapsto (\sigma, \partial(\sigma))$ is a group homomorphism that induces a well-defined homomorphism
\begin{equation*}
\partial\colon C^2_{\mathrm{sym}}(\Gamma, \C^\times) \to H^3_\mathrm{ab}(\Gamma, \C^\times)
\end{equation*}
with kernel $\ker(\partial) = \{\sigma \in C^2_{\mathrm{sym}}(\Gamma, \C^\times) \mid \sigma(a, a) = 1 \text{ for all } a \in \Gamma\}$.
\end{lem}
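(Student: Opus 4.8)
The plan is to check directly that the pair $(\sigma,\partial(\sigma))$ satisfies the three defining properties of an abelian $3$-cocycle, writing $\omega\coloneqq\partial(\sigma)$ for the second component, i.e.\ $\omega(a,b,c)=\sigma(a,b)\sigma(a,c)\sigma(a,b+c)^{-1}$; the homomorphism and kernel statements will then follow with little extra work. The key structural observation I would record first is that $\omega$ is, for each fixed first argument $a$, the coboundary in the remaining two variables of the $1$-cochain $b\mapsto\sigma(a,b)$, and in particular that $\omega$ is symmetric in its last two arguments.

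With this in hand the two hexagon equations \eqref{eq_abelian cocycle equations} are quick. In the first equation the left-hand side $\omega(b,a,c)/(\omega(a,b,c)\omega(b,c,a))$ collapses to $\omega(a,b,c)^{-1}$ because $\omega(b,a,c)=\omega(b,c,a)$, and this matches the right-hand side $\sigma(a,b+c)/(\sigma(a,b)\sigma(a,c))=\omega(a,b,c)^{-1}$ by definition of $\omega$. For the second equation the symmetry in the last two arguments reduces the left-hand side to $\omega(c,a,b)$, while the symmetry of $\sigma$ rewrites the right-hand side as $\omega(c,a,b)^{-1}$; the two agree precisely because $\omega$ is $\{\pm1\}$-valued, which is the first defining condition on an abelian symmetric $2$-cochain.

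The main computational step is the ordinary $3$-cocycle identity $\d\omega=1$. Substituting the definition of $\omega$ into $\d\omega(a,b,c,d)$ and cancelling the repeated $\sigma$-factors, I expect the expression to collapse, after using the symmetry of $\sigma$, to $\partial(\sigma)(c;a,b)\,\partial(\sigma)(d;a,b)\,\partial(\sigma)(c+d;a,b)^{-1}$, which equals $1$ exactly because $x\mapsto\partial(\sigma)(x;a,b)$ is a group homomorphism $\Gamma\to\{\pm1\}$ --- the second defining condition. Thus each of the two conditions in the definition feeds exactly one of the cocycle identities, and $(\sigma,\partial(\sigma))\in Z^3_{\mathrm{ab}}(\Gamma,\C^\times)$.

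The homomorphism property is then immediate from $\partial(\sigma_1\sigma_2)=\partial(\sigma_1)\partial(\sigma_2)$, since the group laws on $2$-cochains and on abelian $3$-cocycles are both pointwise; composing with the quotient $Z^3_{\mathrm{ab}}\to H^3_{\mathrm{ab}}$ gives the asserted well-defined homomorphism $\partial$. For the kernel, one inclusion is elementary: if $(\sigma,\partial(\sigma))=(\sigma_\kappa,\d\kappa)$ is an abelian $3$-coboundary, then $\sigma(a,a)=\sigma_\kappa(a,a)=\kappa(a,a)\kappa(a,a)^{-1}=1$ for all $a$. For the reverse inclusion I would invoke the Eilenberg-MacLane correspondence recalled in \autoref{ex_braidedVect}: the class $[(\sigma,\partial(\sigma))]$ corresponds to the quadratic form $Q(a)=\sigma(a,a)$ on $\Gamma$, so it vanishes exactly when $\sigma(a,a)=1$ for all $a$. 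I expect this reverse direction to be the only genuinely nontrivial point, as it does not follow by direct manipulation but rests on the injectivity of the correspondence $H^3_{\mathrm{ab}}(\Gamma,\C^\times)\cong\{\text{quadratic forms on }\Gamma\}$ (which incidentally forces any such $\sigma$ to be $\{\pm1\}$-valued).
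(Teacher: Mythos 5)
Your proposal is correct and follows essentially the same route as the paper's proof: the two hexagon equations reduce to the symmetry of $\partial(\sigma)$ in its last two arguments together with its $\{\pm1\}$-valuedness, the ordinary $3$-cocycle identity reduces to the homomorphism property of $\partial(\sigma)(-;b,c)$ (the paper packages this as the standard map $Z^2(\Gamma,\widehat{\Gamma})\to Z^3(\Gamma,\C^\times)$, while you expand directly; your claimed collapse of $\d\omega(a,b,c,d)$ to $\partial(\sigma)(c;a,b)\,\partial(\sigma)(d;a,b)\,\partial(\sigma)(c+d;a,b)^{-1}$ does check out), and the kernel is identified via the Eilenberg--MacLane correspondence between $H^3_{\mathrm{ab}}(\Gamma,\C^\times)$ and quadratic forms, exactly as in the paper. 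No gaps.
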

\begin{proof}
By equation~\eqref{eq_hom}, the map $\partial$ can be viewed as a group homomorphism
\begin{equation*}
C^2_{\mathrm{sym}}(\Gamma, \C^\times) \to Z^2(\Gamma, \widehat{\Gamma}), \quad \sigma \mapsto \partial(\sigma).
\end{equation*}
It can be readily verified that, in general, for each $\alpha \in Z^2(\Gamma, \widehat{\Gamma})$, the function $\alpha(a; b, c)$ defines a 3-cocycle in $Z^3(\Gamma, \C^\times)$.

Moreover, since $\partial(\sigma)(a; b, c) = \partial(\sigma)(a; c, b)$, the abelian (or hexagon) condition in equation \eqref{eq_abelian cocycle equations} for $\omega=\partial(\sigma)$ is given by
\begin{equation*}
\partial(\sigma)(a; b, c)^{-1} = \partial(\sigma)(a; b, c)^{-1}, \quad
\partial(\sigma)(c; a, b) = \partial(\sigma)(c; a, b)^{-1}
\end{equation*}
for $a,b,c\in\Gamma$. Since $\partial(\sigma)(a, b, c) \in \{\pm 1\}$, these conditions are satisfied. Altogether, $(\sigma, \partial(\sigma))$ lies in $Z^3_\mathrm{ab}(\Gamma, \C^\times)$ for $\sigma \in C^2_{\mathrm{sym}}(\Gamma, \C^\times)$.

Finally, an arbitrary abelian 3-cocycle $(\sigma,\omega)\in Z^3_\mathrm{ab}(\Gamma, \C^\times)$ is cohomologous to the trivial cocycle if and only if $\sigma(a,a)=1$ for all $a\in \Gamma$. In particular, the same holds for $(\sigma,\partial(\sigma))$.
\end{proof}

We shall show that every quadratic form has an associated abelian 3-cocycle that comes from an abelian symmetric 2-cochain. To do this, and to present concrete nontrivial examples, we need to construct an abelian symmetric 2-cochain for two families of discriminant forms (cf.\ \autoref{tab_disc}).

\begin{ex}\label{ex_abelina symmetric cyclic}
Let $\Gamma = \Z_n$ be a cyclic group of even order. We represent the elements in $\Z_n$ by integers $0\leq x<n$. A generator of the abelian group of quadratic forms over $\Gamma$ is given by
\begin{equation*}
Q_{2n}^0(x) \coloneqq \e\left(\frac{x^2}{2n}\right),\quad 0\leq x< n,
\end{equation*}
that is, any other quadratic form over $\Gamma$ is obtained as a power of $Q_{2n}^0$. (In the special case where $n=2^k$, $k\geq1$, is a power of 2, this discriminant form is indecomposable and denoted by the Jordan symbol $n_1^{+1}$, see \autoref{tab_disc}. The other Jordan symbols $n_t^{\pm1}$ are realised by raising $Q_{2n}^0$ to the power of $t$.)

An abelian symmetric 2-cochain associated with $Q_{2n}^0$ is given by
\begin{equation*}
\sigma_{2n}^0(x, y) \coloneqq \e\left(\frac{xy}{2n}\right),\quad 0 \leq x, y < n.
\end{equation*}
The associated 3-cocycle, measuring the failure of $\sigma_{2n}^0$ to be bimultiplicative, is
\begin{equation*}
\partial(\sigma^0_{2n})(a;x,y)=(-1)^{a\rho(x,y)}
\end{equation*}
where
\begin{equation}\label{eq_rho}
\rho(x, y) \coloneqq
\begin{cases}
1 & \text{if } x + y \geq n, \\
0 & \text{otherwise}
\end{cases}
\end{equation}
for $0\leq x,y< n$.
\end{ex}

\begin{ex}\label{ex_abelina symmetric center drinfeld}
Let $\Gamma_k = \Z_{2^k}\oplus \Z_{2^k}$ for any $k \geq 1$, with representatives $(x_1,x_2)$ for $0\leq x_1,x_2<2^k$. We endow $\Gamma_k$ with the nondegenerate quadratic forms
\begin{equation*}
Q_k^+(x_1,x_2) \coloneqq \e\left(\frac{x_1x_2}{2^k}\right), \quad Q_k^-(x_1, x_2) \coloneqq \e\left(\frac{x_1^2+ x_1x_2+ x_2^2}{2^k}\right),
\end{equation*}
where $0\leq x_1,x_2<2^k$.

The discriminant form $(\Gamma_k, Q_k^+)$ corresponds to the Drinfeld centre of the pointed category $\smash{\Vect_{\Z_{2^k}}}$, while $(\Gamma_k, Q_k^-)$ is a generalisation of the 3-fermion, in the sense that it is anisotropic. Both discriminant forms are indecomposable and their Jordan symbols are $(2^k)_\II^{+2}$ and $(2^k)_\II^{-2}$, respectively (see \autoref{tab_disc}).

Abelian symmetric 2-cochains associated with $Q_k^{\pm}$ are given by
\begin{align*}
\sigma_k^+((x_1, x_2), (y_1, y_2)) &\coloneqq \e\left(\frac{x_1 y_2 + x_2 y_1}{2^{k+1}}\right), \\
\sigma_k^-((x_1, x_2), (y_1, y_2)) &\coloneqq \e\left(\frac{2x_1y_1 + 2x_2y_2 + x_1 y_2 + x_2 y_1}{2^{k+1}}\right),
\end{align*}
where $0 \leq x_i, y_i < 2^k$.

In these cases, the 3-cocycles $\partial(\sigma_k^{\pm})$ coincide and are given by
\begin{align*}
\partial(\sigma_k^{\pm})((a_1, a_2); (x_1, x_2), (y_1, y_2)) &= (-1)^{a_1\rho(x_2, y_2) + a_2\rho(x_1, y_1)},
\end{align*}
where $0 \leq x_i, y_i < 2^k$, and $\rho$ is defined in equation \eqref{eq_rho}.
\end{ex}

Without loss of generality, given a quadratic form $Q$ on $\Gamma$, we may assume that the representing abelian 3-cocycle $(\sigma,\omega)$ has the following nice form, facilitating the subsequent computations:
\begin{prop}\label{ass_strongdisc}
For any abelian group $\Gamma$, the group homomorphism
\begin{equation*}
\partial\colon C^2_{\mathrm{sym}}(\Gamma, \C^\times) \to H^3_\mathrm{ab}(\Gamma, \C^\times)
\end{equation*}
is surjective. In other words, any (possibly degenerate) quadratic form $Q$ on $\Gamma$ has a representing abelian 3-cocycle of the form $(\sigma,\omega)=(\sigma, \partial(\sigma))$ for some abelian symmetric 2-cochain~$\sigma$. In particular, these abelian 3-cocycles $(\sigma,\omega)$ satisfy the following properties:
\begin{enumerate}
\item\label{item_prop1} $\sigma$ is normalised, i.e.\ $\sigma(a, 1) = \sigma(1, a) = 1$ for all $a \in \Gamma$,
\item\label{item_prop2} $\sigma$ is symmetric, i.e.\ $\sigma(a, b) = \sigma(b, a)$ for all $a, b \in \Gamma$,
\item\label{item_prop3} for fixed $b, c \in \Gamma$, $\omega(\cdot, b, c)$ defines a group homomorphism $\Gamma \to \C^\times$,
\item\label{item_prop4} $\sigma(a, b+c)\sigma(a, b)^{-1}\sigma(a, c)^{-1} = \omega(a, b, c)$ for all $a, b, c \in \Gamma$.
\end{enumerate}
\end{prop}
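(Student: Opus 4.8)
The plan is to show that everything reduces to the surjectivity of $\partial$, and then to realise a generating set of quadratic forms by explicit abelian symmetric $2$-cochains, treating the odd and the $2$-primary parts separately.

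First I would dispose of the listed properties, which cost nothing. Normalisation \ref{item_prop1} and symmetry \ref{item_prop2} hold by construction for the cochains below; condition \ref{item_prop3} is exactly the homomorphism requirement \eqref{eq_hom} together with $\omega=\partial(\sigma)$ and $\partial(\sigma)\in\{\pm1\}$; and \ref{item_prop4} is merely the definition of $\partial(\sigma)$ (recall $\partial(\sigma)^{-1}=\partial(\sigma)$ since the values are signs). By \autoref{lem_definition partial homomorphism}, $(\sigma,\partial(\sigma))$ is then an abelian $3$-cocycle. Hence the whole assertion is the claim that $\partial\colon C^2_{\mathrm{sym}}(\Gamma,\C^\times)\to H^3_\mathrm{ab}(\Gamma,\C^\times)$ is onto.

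Next I would use the Eilenberg--MacLane identification \cite{EM50} of $H^3_\mathrm{ab}(\Gamma,\C^\times)$ with the group of quadratic forms via $[(\sigma,\omega)]\mapsto(a\mapsto\sigma(a,a))$, under which $\partial(\sigma)$ corresponds to $a\mapsto\sigma(a,a)$ (the self-braiding, cf.\ \autoref{ex_braidedVect}). Surjectivity thus becomes the statement that every quadratic form $Q$ equals $a\mapsto\sigma(a,a)$ for some abelian symmetric $\sigma$. Since these cochains form a group and $\sigma\mapsto(a\mapsto\sigma(a,a))$ is multiplicative, it suffices to realise a generating set. I would fix a decomposition $\Gamma=\Gamma_{\mathrm{odd}}\oplus\Gamma_2$ into its odd and $2$-primary parts; these are orthogonal for any $Q$, because $B_Q(e,e')$ has order dividing $\gcd(\ord e,\ord e')$, which is $1$ across the two parts. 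On $\Gamma_{\mathrm{odd}}$ the bimultiplicative form $B_Q$ has a unique bimultiplicative square root (cf.\ \autoref{ex_braidedVect}), and $\sigma=B_Q^{1/2}$ is symmetric with $\partial(\sigma)=1$ and $\sigma(a,a)=Q(a)$; this settles the odd part. It then remains to treat a $2$-group $\Gamma_2=\bigoplus_i\langle e_i\rangle$ with $\ord(e_i)=2^{k_i}$, where a quadratic form is determined by its restrictions to the summands together with the cross pairings $B_Q(e_i,e_j)$, $i<j$, and any such data arise; so $\operatorname{Quad}(\Gamma_2)$ is generated by single-summand forms and by single cross pairings. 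For a single summand $\langle e_i\rangle\cong\Z_{2^{k_i}}$, \autoref{ex_abelina symmetric cyclic} supplies an abelian symmetric cochain for the generating form, and its powers (extended to $\Gamma_2$ by ignoring the other coordinates) realise all of $\operatorname{Quad}(\Z_{2^{k_i}})$. For a single cross pairing $B_Q(e_i,e_j)=\e(c/2^{m})$ with $m=\min(k_i,k_j)$, I would take, generalising \autoref{ex_abelina symmetric center drinfeld},
\begin{equation*}
\sigma(a,b)=\e\!\left(\frac{c\,(a_ib_j+a_jb_i)}{2^{\,m+1}}\right),\qquad 0\le a_i,b_i<2^{k_i},
\end{equation*}
which is symmetric and satisfies $\sigma(a,a)=\e(c\,a_ia_j/2^{m})=B_Q(e_i,e_j)^{a_ia_j}$, the desired diagonal. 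Products of all these building blocks remain normalised, symmetric and (once checked) abelian, realising an arbitrary $Q$ on $\Gamma_2$.

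The main obstacle, and the only real computation, is verifying that the cross-term cochain is \emph{abelian}. Evaluating with representatives and writing $n_i=2^{k_i}$, the carrying terms (governed by $\rho$ from \eqref{eq_rho}) yield
\begin{equation*}
\partial(\sigma)(a;b,c)=\e\!\left(\frac{c}{2}\Bigl(\tfrac{n_j}{2^{m}}\,a_i\,\rho(b_j,c_j)+\tfrac{n_i}{2^{m}}\,a_j\,\rho(b_i,c_i)\Bigr)\right).
\end{equation*}
Since $2^{m}\mid n_i,n_j$ the argument is a half-integer, so the values lie in $\{\pm1\}$; and the exponent is linear in $a$, with well-definedness on $\Z_{2^{k_i}}\times\Z_{2^{k_j}}$ coming down to $\max(k_i,k_j)\ge1$, which forces the ambiguous correction, proportional to $c\,2^{\max(k_i,k_j)}$, to be even. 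Hence $a\mapsto\partial(\sigma)(a;b,c)$ is a homomorphism and $\sigma$ is abelian. This is exactly the point at which even groups behave differently from odd ones: the extra factor $2$ in the denominator $2^{m+1}$ replaces the bimultiplicative square root of $B_Q$ — which need not exist when $|\Gamma|$ is even — and the condition $k_i,k_j\ge1$ is precisely what rescues the homomorphism property, as already seen in the examples.
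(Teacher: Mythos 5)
Your proof is correct, but it follows a genuinely different route from the paper's. The paper first reduces a degenerate $Q$ to the nondegenerate case by embedding $(\Gamma,Q)$ into its double $(\Gamma\oplus\widehat{\Gamma},\widetilde{Q})$ and restricting, and then invokes the orthogonal Jordan decomposition of a discriminant form into indecomposable components, each of which is covered by the two explicit families of cochains in \autoref{ex_abelina symmetric cyclic} and \autoref{ex_abelina symmetric center drinfeld}. You instead split off only the odd part (automatic, since $B_Q$ pairs elements of coprime order trivially), take an \emph{arbitrary} cyclic decomposition of the $2$-primary part, and generate the group of quadratic forms by single-summand forms and single cross-pairings, supplying a new explicit abelian symmetric cochain $\e\bigl(c(a_ib_j+a_jb_i)/2^{m+1}\bigr)$ for the latter; the only real work is your carry computation showing $\partial(\sigma)(\cdot\,;b,c)\in\{\pm1\}$ is a character, which is fine since $2^{k_i+k_j-m}=2^{\max(k_i,k_j)}$ is even. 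What this buys you is a more elementary and self-contained argument: no appeal to the Jordan classification, no orthogonality of the decomposition, and no detour through the double construction, since degenerate forms are handled on the same footing. What the paper's route buys is that it reuses exactly the two example families it needs anyway later in the text (for \autoref{prop_ass_q} and the Gauss sum computations), at the cost of the nondegeneracy reduction. One cosmetic point: you overload the letter $c$ as both the integer in $\e(c/2^m)$ and the third argument of $\partial(\sigma)(a;b,c)$; rename one of them before writing this up.
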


This implies that $\sigma^2 = B_Q$, the bilinear form associated with $Q$, and that $\omega$ only takes values in $\{\pm1\}$, so that $\omega(\cdot, b, c)$ defines a group homomorphism $\Gamma/2\Gamma \to \C^\times$ for all $b, c \in \Gamma$. Additionally, $\omega(a, b, c) = \omega(a, c, b)$ for all $a, b, c \in \Gamma$.

In the following, by a slight abuse of notation, we shall allow the first argument of $\omega$ to be in the quotient $\Gamma/2\Gamma$ rather than $\Gamma$ itself, provided that $\omega$ is of the above special form.

We remark that if $|\Gamma|$ is odd, then the above properties reduce to the choice of abelian $3$-cocycle $(\sigma,\omega)$ in the second half of \autoref{sec_TY}. That is, the only solution in this case is given by $\omega=1$ and $\sigma$ the unique bimultiplicative square root of $B_Q$ (see \autoref{ex_braidedVect}).

\begin{proof}
First, suppose that $\Gamma$ is odd. For a quadratic form $Q$ on $\Gamma$, the symmetric bilinear form $\sigma(a, b) \coloneqq B_Q(a,b)^{1/2}=(Q(a + b)/(Q(a)Q(b)))^{1/2}$ is an abelian symmetric 2-cochain such that $\omega\coloneqq\partial(\sigma) = 1$ and $\sigma(a, a) = Q(a)$ for all $a\in\Gamma$.

Now, to the general case. First, we assume that $Q$ is nondegenerate, i.e.\ that $(\Gamma,Q)$ is a discriminant form. Note that it suffices to prove the assertion for the indecomposable orthogonal components of $(\Gamma, Q)$. Indeed, all of the assertions are preserved under taking orthogonal direct sums. As we will see, the decomposition and \autoref{ex_abelina symmetric cyclic} and \autoref{ex_abelina symmetric center drinfeld} provide us with a concrete way to construct an associated abelian symmetric 2-cochain on~$\Gamma$. Although the orthogonal decomposition of $\Gamma$ is not entirely unique, this does not affect the argument, as we are primarily asserting an existence statement.

Without loss of generality, assume now that $(\Gamma, Q)$ is indecomposable and of order a power of 2; we have treated the odd case above. The indecomposable discriminant forms are either over cyclic groups of order a power of 2 or the families constructed in \autoref{ex_abelina symmetric center drinfeld}. In both cases, the examples provide specific abelian symmetric 2-cochains, thus allowing us, via the orthogonal decomposition, to construct a concrete symmetric abelian 2-cochain associated with the nondegenerate quadratic form $Q$ on $\Gamma$.

Finally, if $(\Gamma, Q)$ is degenerate, we can embed it into a (nondegenerate) discriminant form using the construction of the double:
\begin{equation*}
(\Gamma, Q) \to (\Gamma \oplus \widehat{\Gamma}, \widetilde{Q}), \quad a \mapsto (a, 0),
\end{equation*}
where $\widetilde{Q}(a, \gamma) = Q(a)\gamma(a)$ for $a\in\Gamma$ and $\gamma\in\widehat{\Gamma}$; see Proposition~5.8 in \cite{DN21}. By the previous argument, $(\Gamma \oplus \widehat{\Gamma}, \widetilde{Q})$ has an associated abelian symmetric 2-cochain, and by restriction, this defines another one over $(\Gamma, Q)$.
\end{proof}

In \autoref{cor_special_lat_disc}, we will see how abelian $3$-cocycles with the properties described in \autoref{ass_strongdisc} arise naturally in the context of discriminant forms $\Gamma = L^*\!/L$ of even lattices, where the bilinear form $\langle\cdot, \cdot\rangle$ takes values in $2\Z$, i.e.\ integral lattices scaled by $\sqrt{2}$.

\begin{rem}
The lattices in \autoref{cor_special_lat_disc} thus provide an alternative way to prove \autoref{ass_strongdisc} (in the nondegenerate case). While not all discriminant forms can be realised as dual quotients $L^*\!/L$ of even lattices $L = \sqrt{2}K$ with $K$ integral, all indecomposable Jordan components do indeed appear in $L^*\!/L$ for some lattice $L = \sqrt{2}K$. As explained in the above proof, this is sufficient to prove the assertion.
\end{rem}

We fix a further structure on the pair $(\Gamma,Q)$. In addition to \autoref{ass_strongdisc}, where we described a square root of the bimultiplicative form $B_Q$ associated with~$Q$, we now describe a square root of the quadratic form $Q$ itself. Again, we first consider specific examples, before using these to prove the general case.
\begin{ex}\label{ex_square}~
\begin{enumerate}[wide]
\item Consider the discriminant form $(\Z_n,Q_{2n}^0)$, where $n\geq 2$ is even, and the abelian symmetric 2-cochain $\sigma_{2n}^0$ on it from \autoref{ex_abelina symmetric cyclic}. Set
\begin{equation*}
q_{2n}^0(x) \coloneqq \e\left(\frac{x^2}{4n}\right), \quad 0\leq x< n.
\end{equation*}
Then $q_{2n}^0(a)^2 = Q_{2n}(a)$, and moreover, with $\omega\coloneqq\partial(\sigma_{2n}^0)$,
\begin{equation*}
\frac{q_{2n}^0(a+b)}{q_{2n}^0(a)q_{2n}^0(b)} = \sigma_{2n}^0(a,b)\omega(a+b,a,b)\baromega(\bar{\delta},a,b)
\end{equation*}
for all $a,b \in \Z_n$, where $\bar{\delta}=0$ if $4\mid n$ and $\bar{\delta}=1\in\Z_2\cong\Z_n/2\Z_n$ otherwise.

\item Consider the discriminant form $(\Z_{2^k}\oplus\Z_{2^k},Q_k^+)$ for $k\geq1$ and the abelian symmetric 2-cochain $\sigma_k^+$ from \autoref{ex_abelina symmetric center drinfeld}. Define
\begin{equation*}
q_k^+(x_1, x_2) \coloneqq \e\left(\frac{x_1x_2}{2^{k+1}}\right),\quad 0\leq x_1, x_2 < 2^k.
\end{equation*}
Then $q_k^+(a_1,a_2)^2 = Q_k^+(a_1,a_2)$, and moreover, with $\omega\coloneqq\partial(\sigma_k^+)$,
\begin{equation*}
\frac{q_k^+(a_1+b_1, a_2+b_2)}{q_k^+(a_1, a_2)q_k^+(b_1, b_2)} = \sigma_{k}^+((a_1, a_2), (b_1, b_2))\omega((a_1+b_1, a_2+b_2), (a_1, a_2), (b_1, b_2))
\end{equation*}
for all $a_i,b_i\in\Z_{2^k}$.

\item Consider the discriminant form $(\Z_{2^k}\oplus\Z_{2^k}, Q_k^-)$ for $k\geq1$ and the abelian symmetric 2-cochain $\sigma_k^-$ from \autoref{ex_abelina symmetric center drinfeld}. Define
\begin{equation*}
q_k^-(x_1, x_2) \coloneqq \e\left(\frac{x_1^2 + x_1x_2 + x_2^2}{2^{k+1}}\right),
\end{equation*}
where $0 \leq x_1, x_2 < 2^k$. It follows that $q_k^-(a_1,a_2)^2 = Q_k^-(a_1,a_2)$, and moreover
\begin{align*}
\frac{q_k^-(a_1+b_1, a_2+b_2)}{q_k^-(a_1, a_2)q_k^-(b_1, b_2)} &= \sigma_{k}^-((a_1, a_2), (b_1, b_2))\omega((a_1+b_1, a_2+b_2), (a_1, a_2), (b_1, b_2))
\end{align*}
for all $a_i,b_i\in\Z_{2^k}$, with $\omega\coloneqq\partial(\sigma_k^-)$.
\end{enumerate}
\end{ex}

\begin{prop}\label{prop_ass_q}
Given a representing abelian $3$-cocycle $(\sigma,\omega)=(\sigma,\partial(\sigma))$ for $(\Gamma,Q)$ as in \autoref{ass_strongdisc}, there is a class $\bar{\delta}\in \Gamma/2\Gamma$ and a function $q\colon \Gamma \to \C^\times$ such that:
\begin{enumerate}
\item $q(a)^2 = \sigma(a,a) = Q(a)$,
\item $q(a+b)q(a)^{-1}q(b)^{-1} = \sigma(a,b) \omega(a+b,a,b) \baromega(\bar{\delta},a,b)$
\end{enumerate}
for all $a,b \in \Gamma$.
\end{prop}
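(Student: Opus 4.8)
The plan is to mirror the structure of the proof of \autoref{ass_strongdisc}: reduce to the indecomposable Jordan components of $(\Gamma,Q)$, for which the desired data $q$ and $\delta$ are supplied by the explicit \autoref{ex_square}, handle the odd-order case by hand, and treat degenerate $Q$ by embedding into its double. The observation that makes the reduction work is that both conditions (1) and (2) are \emph{multiplicative under orthogonal direct sums}: if $(q_1,\delta_1)$ and $(q_2,\delta_2)$ satisfy (1) and (2) for $(\Gamma_1,Q_1)$ and $(\Gamma_2,Q_2)$ with the block-diagonal cochains $\sigma$ and $\omega=\partial(\sigma)$, then $q((a_1,a_2))\coloneqq q_1(a_1)q_2(a_2)$ and $\delta\coloneqq(\delta_1,\delta_2)\in\Gamma_2$ satisfy them for the orthogonal sum. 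Indeed, squaring $q$ gives $Q_1\oplus Q_2=Q$, and the right-hand side of (2) factors as a product of the two component identities because $\sigma(a,b)$, $\omega(a+b,a,b)$ and $\omega(\delta,a,b)$ each split as products over the two factors.

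First I would dispose of the odd-order case exactly as in the second half of \autoref{sec_TY}: there $\omega=1$ and $q=Q^{1/2}$ is the unique quadratic square root, so (1) holds and (2) reduces to $q(a+b)q(a)^{-1}q(b)^{-1}=\sigma(a,b)$ with $\delta=0$. For a general nondegenerate $(\Gamma,Q)$, the multiplicativity above then reduces the claim to the indecomposable $2$-power Jordan components of \autoref{tab_disc}. The cyclic blocks $(2^k)_t^{\pm1}$ are powers $Q=(Q_{2^{k+1}}^0)^t$ of the generator with $t$ odd, and I would take $q=(q_{2^{k+1}}^0)^t$; raising the identity of \autoref{ex_square}(1) to the $t$-th power and using that $\partial$ is a homomorphism, so that $\omega=\partial(\sigma_{2^{k+1}}^0)^t$, shows that (2) holds with the same $\delta$ as in the example. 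That $\delta$ lies in $\Gamma_2$: it is $0$ when $k\geq2$ (since then $4\mid 2^k$) and the unique nonzero $2$-torsion element when $k=1$. The rank-two blocks $(2^k)_\II^{\pm2}$ are handled directly by \autoref{ex_square}(2) and (3), where $\delta=0$.

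The remaining, and most delicate, point is the degenerate case. Following the proof of \autoref{ass_strongdisc}, I would embed $(\Gamma,Q)$ into its nondegenerate double $(\Gamma\oplus\widehat{\Gamma},\widetilde{Q})$ via $a\mapsto(a,0)$, apply the nondegenerate case to obtain $\widetilde{q}$ and $\widetilde{\delta}\in(\Gamma\oplus\widehat{\Gamma})_2$, and set $q\coloneqq\widetilde{q}|_\Gamma$. Condition (1) restricts immediately, and the restrictions of $\widetilde{\sigma}$ and $\widetilde{\omega}$ to $\Gamma\times\{0\}$ reproduce $\sigma$ and $\omega$, so (2) holds for the element $\widetilde{\delta}$ of the double. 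The main obstacle is then to replace $\widetilde{\delta}$ by a genuine element $\delta\in\Gamma_2$ rather than merely in $(\Gamma\oplus\widehat{\Gamma})_2$: the hyperbolic coupling between $\Gamma$ and $\widehat{\Gamma}$ in $\widetilde{\sigma}$ means the $\widehat{\Gamma}$-component of $\widetilde{\delta}$ could a priori contribute to $\widetilde{\omega}(\widetilde{\delta},(a,0),(b,0))$. Here I would use that, by \autoref{ass_strongdisc}, $\omega(\cdot,a,b)$ factors through $\Gamma/2\Gamma$, so the relevant function depends only on a class, and then check — exploiting the compatibility of the doubling with the Jordan decomposition and the fact, verified in the examples, that each block's $\delta$ is $2$-torsion — that $\widetilde{\omega}(\widetilde{\delta},(a,0),(b,0))=\omega(\delta,a,b)$ for a suitable $\delta\in\Gamma_2$. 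This bookkeeping, rather than any single hard computation, is where care is needed; the substantive analytic content has already been carried out in \autoref{ex_square}.
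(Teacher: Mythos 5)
Your proof follows the paper's own argument: reduce to indecomposable Jordan components via multiplicativity of conditions (1) and (2) under orthogonal direct sums, settle the odd case with the unique square roots, and invoke \autoref{ex_square} (raised to the $t$-th power for $(2^k)_t^{\pm1}$) for the $2$-adic blocks, with $\delta$ nonzero exactly for the components $2_t^{\pm1}$. The paper's proof stops there --- it only treats nondegenerate $(\Gamma,Q)$ --- so your additional paragraph on the degenerate case via the double goes beyond it and is the one place where your argument is left incomplete, but this does not affect the case the paper actually proves.
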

\begin{proof}
The proof is similar to the proof of \autoref{ass_strongdisc}. Again, it suffices to prove the assertion for all indecomposable discriminant forms $(\Gamma,Q)$.

For odd $|\Gamma|$, where $\sigma$ had to be the unique bimultiplicative square root of $B_Q$ and $\omega$ trivial, the unique solution is $\bar{\delta}=0$ since $\Gamma/2\Gamma=\{0\}$ and $q$ the unique quadratic square root of $Q$.

For the indecomposable $2$-adic Jordan components, all verified in \autoref{ex_square}, a choice of $q$ is possible with $\bar{\delta}=0$, except for the Jordan components $2_t^{\pm 1}$, where we need to choose the nontrivial class $\bar{\delta}=1\in\Z_2\cong\Gamma/2\Gamma$. Hence, for any decomposition of $(\Gamma, Q)$, the class $\bar{\delta}$ is determined in $\Gamma/2\Gamma$.
\end{proof}

In other words, in the case of a discriminant form $(\Gamma,Q)$ and fixing a decomposition into indecomposable Jordan blocks, $\bar{\delta}\in\Gamma/2\Gamma$ equals
\begin{equation*}
\bar{\delta}=\sum_{D}
\begin{cases}
1\in\Z_2\cong D/2D&\text{if }D\cong 2_t^{\pm1},\\
0\in D/2D&\text{otherwise},
\end{cases}
\end{equation*}
where the sum runs over the indecomposable Jordan components $D$ of $\Gamma$.

Since $\omega$ is a homomorphism into $\{\pm1\}$ in the first argument, only the class in $\Gamma/2\Gamma$ is relevant. Moreover, nontrivial contributions to $\bar{\delta}$ can only come from orthogonal direct summands of $\Gamma$ of order $2$.

In view of \autoref{cor_special_lat_disc} we remark that the Jordan components $2_t^{\pm1}$ only appear in the context of even lattices $L=\sqrt{2}K$ where $K$ is integral but not even. On the other hand, all other indecomposable Jordan components already appear for lattices $L=\sqrt{2}K$ where $K$ is even. We observe that the somewhat natural choice of $\bar{\delta}$ given in \autoref{defi_latqqbar} for $\Gamma=L^*\!/L$ agrees with the above formula for $\bar{\delta}$, as it must.
\begin{rem}
\autoref{prop_ass_q} can also be proved using the concrete lattice realisation in \autoref{cor_special_lat_disc} because all all indecomposable Jordan components appear in $L^*\!/L$ for some lattice $L=\sqrt{2}K$ with $K$ integral.

For instance, if $K$ is even, then it is apparent from the construction there that $\bar{\delta}=0$. On the other hand, for $K=\Z$ and $L=\sqrt{2}\Z$ so that $L^*\!/L\cong 2_1^{+1}$, the construction there yields the nontrivial class $\bar{\delta}=1\in\Z_2$, in agreement with the above claim.
\end{rem}


\subsection{Braided \texorpdfstring{$\Z_2$}{Z\_2}-Crossed Tensor Category}\label{sec_evenTY}

In the following, we explicitly define a braided $G$-crossed extension $\LM(\Gamma,\sigma,\omega,\bar{\delta},\eps\,|\,q,\alpha,\beta)$ with $G=\langle g\rangle\cong\Z_2$ of the braided tensor category $\smash{\Vect_\Gamma^Q}$ with an action of $\Z_2$ by $g_*\C_a=\C_{-a}$ and a certain tensor structure $\tau^g$.

\medskip

We assume in the following that the representing abelian $3$-cocycle $(\sigma,\omega)$ for the discriminant form $(\Gamma,Q)$ has the form given in \autoref{ass_strongdisc}. We further fix a sign $\eps=\pm 1$. Moreover, let $\bar{\delta}\in\Gamma/2\Gamma$ be (for now) arbitrary.

Given these data, consider the $\Z_2$-graded abelian category
\begin{equation*}
\cC=\cC_1\oplus \cC_g=\Vect_{\Gamma} \oplus \Vect_{\Gamma/2\Gamma},
\end{equation*}
where the simple objects are denoted by $\C_a$ for $a\in \Gamma$ and by $\X^{\bar{x}}$ for $\bar{x}\in \Gamma/2\Gamma$. In general, we also denote the coset of an element $x\in\Gamma$ by $\bar{x}\in\Gamma/2\Gamma$. Recall that $\omega$ only depends on $\Gamma/2\Gamma$ in the first argument.

Then we shall prove below (see \autoref{sec_app}) that the following endows $\cC$ with the structure of a $\Z_2$-graded tensor category $\LM(\Gamma,\sigma,\omega,\bar{\delta},\eps)$:
{\allowdisplaybreaks
\begin{align*}
\C_a\otimes \C_b
&=\C_{a+b},\\
\C_a\otimes \X^{\bar{x}}
&= \X^{\bar{a}+\bar{x}},\\
\X^{\bar{x}}\otimes \C_a
&= \X^{\bar{x}+\bar{a}},\\
\X^{\bar{x}} \otimes \X^{\bar{y}}
&=\bigoplus_{\substack{t \in \Gamma \\ \bar{t}=\bar{\delta}+\bar{x}+\bar{y}}}\C_t=\bigoplus_{t \in \bar{\delta}+\bar{x}+\bar{y}}\C_t
\end{align*}
}%
for $a,b\in\Gamma$ and $\bar{x},\bar{y}\in\Gamma/2\Gamma$, where we take the associator to be
{\allowdisplaybreaks
\begin{align*}
(\C_a\otimes \C_b)\otimes \C_c
&\overset{\omega(a,b,c)}{\xrightarrow{\hspace*{2cm}}}
\C_a\otimes (\C_b\otimes \C_c),\\
(\X^{\bar{x}}\otimes \C_a)\otimes \C_b
&\overset{\baromega(\bar{x}+\bar{\delta},a,b)}{\xrightarrow{\hspace*{2cm}}}
\X^{\bar{x}}\otimes (\C_a\otimes \C_b),\\
(\C_a\otimes \X^{\bar{x}})\otimes \C_b
&\overset{\sigma(a,b)}{\xrightarrow{\hspace*{2cm}}}
\C_a\otimes (\X^{\bar{x}}\otimes \C_b),\\
(\C_a\otimes \C_b)\otimes \X^{\bar{x}}
&\overset{\baromega(\bar{a}+\bar{b}+\bar{x},a,b)}{\xrightarrow{\hspace*{2cm}}}
\C_a\otimes (\C_b\otimes \X^{\bar{x}}),\\
e_{a+t}\in
(\C_a\otimes \X^{\bar{x}})\otimes \X^{\bar{y}}
&\overset{\baromega(\bar{a}+\bar{x},a,t)}{\xrightarrow{\hspace*{2cm}}}
\C_a\otimes (\X^{\bar{x}}\otimes \X^{\bar{y}})
\ni e_a\otimes e_t,\\
e_t\otimes e_a\in
(\X^{\bar{x}}\otimes \X^{\bar{y}})\otimes \C_a
&\overset{\baromega(\bar{x},t,a)}{\xrightarrow{\hspace*{2cm}}}
\X^{\bar{x}}\otimes (\X^{\bar{y}}\otimes \C_a)
\ni e_{{a}+{t}},\\
e_t \in
(\X^{\bar{x}}\otimes \C_a)\otimes \X^{\bar{y}}
&\overset{\sigma(a,t)}{\xrightarrow{\hspace*{2cm}}}
\X^{\bar{x}}\otimes (\C_a\otimes \X^{\bar{y}})
\ni e_t,\\
v_t \in
(\X^{\bar{x}}\otimes \X^{\bar{y}})\otimes \X^{\bar{z}}
&\overset{\eps |2\Gamma|^{-1/2}\sigma(t,r)^{-1}}{\xrightarrow{\hspace*{2cm}}}
\X^{\bar{x}}\otimes (\X^{\bar{y}}\otimes \X^{\bar{z}})
\ni v_r
\end{align*}
}%
with the normalisation factor $|2\Gamma|$ for $a,b,c\in\Gamma$ and $\bar{x},\bar{y},\bar{z}\in\Gamma/2\Gamma$. Here, $e_t\in\C_t$ denotes a vector in the tensor product $\X^{\bar{x}}\otimes\X^{\bar{y}}=\bigoplus_{t\in\bar{\delta}+\bar{x}+\bar{y}}\C_t$ and $v_t\in\X^{\bar{t}+\bar{z}}$ in the tensor product $\smash{(\X^{\bar{x}}\otimes \X^{\bar{y}})\otimes \X^{\bar{z}}=\bigoplus_{t\in\bar{\delta}+\bar{x}+\bar{y}}\X^{\bar{t}+\bar{z}}}$.
The matrix $(\sigma(t,r)^{-1})_{t,r}$ is invertible by \autoref{lm_characterSum} (cf.\ \autoref{lem_natural isomorhism}).

\medskip

Similarly to \autoref{sec_TY}, there is a rigid structure with dual objects $\C_a^*=\C_{-a}$ and $(\X^{\bar{x}})^*=\X^{-\bar{x}-\bar{\delta}}$ and with $\coeval_\X=\iota_{\C_0}$ and $\eval_\X=\eps|2\Gamma|^{1/2}\pi_{\C_0}$. The two zigzag identities relating $\eval_\X$ and $\coeval_\X$ hold because the associator and also the inverse associator on $\X\otimes \X\otimes\X$ at $t=0$ are $\eps|2\Gamma|^{-1/2}$, by the slightly unusual character sum in \autoref{lm_characterSum} below.

\medskip

We shall then show that this tensor category $\cC=\LM(\Gamma,\sigma,\omega,\bar{\delta},\eps)$ admits an action of $G=\langle g\rangle\cong\Z_2$ defined by
\begin{equation*}
g_*\C_a=\C_{-a},\quad g_*\X^{\bar{x}}=\X^{\bar{x}}
\end{equation*}
for $a\in\Gamma$, $\bar{x}\in \Gamma/2\Gamma$, with strict composition and with tensor structures $\tau$ given by
{\allowdisplaybreaks
\begin{equation}\label{eq_LM_tau}
\begin{aligned}
g_*(\C_a\otimes \C_b)
&\overset{\omega(a,b,-b)}{\xrightarrow{\hspace*{2cm}}}
g_*\C_a\otimes g_*\C_b,\\
g_*(\C_a\otimes \X^{\bar{x}})
&\overset{\baromega(\bar{a}+\bar{x},a,-a)}{\xrightarrow{\hspace*{2cm}}}
g_*\C_a\otimes g_*\X^{\bar{x}},\\
g_*(\X^{\bar{x}} \otimes \C_a)
&\overset{\baromega(\bar{x}+\bar{\delta},a,-a)}{\xrightarrow{\hspace*{2cm}}}
g_*\X^{\bar{x}} \otimes g_*\C_a,\\
g_*(\X^{\bar{x}}\otimes \X^{\bar{y}})
&\overset{\baromega(\bar{x},t,-t)}{\xrightarrow{\hspace*{2cm}}}
g_*\X^{\bar{x}}\otimes g_*\X^{\bar{y}}
\end{aligned}
\end{equation}
}%
for $a,b\in\Gamma$ and $\bar{x},\bar{y}\in\Gamma/2\Gamma$ and again with $t$ the summation index appearing in the tensor product of $\X\X$-type.

\medskip

Now, suppose in addition that $\bar{\delta}\in\Gamma/2\Gamma$ and $q\colon\Gamma\to\C^\times$ are given as in \autoref{prop_ass_q}.

We shall establish that for every choice of $q$, the $\Z_2$-graded tensor category $\cC$ above becomes a braided $\Z_2$-crossed tensor category $\LM(\Gamma,\sigma,\omega,\bar{\delta},\eps\,|\,q,\alpha)$ with the following braiding:
{\allowdisplaybreaks
\begin{equation}\label{eq_LM_braiding}
\begin{aligned}
\C_a \otimes \C_b
&\overset{\sigma(a,b)}{\xrightarrow{\hspace*{3cm}}}
\C_b \otimes \C_a,\\
\C_a \otimes \X^{\bar{x}}
&\overset{q(a)^{-1}}{\xrightarrow{\hspace*{3cm}}}
\X^{\bar{x}} \otimes \C_a,\\
\X^{\bar{x}} \otimes \C_a
&\overset{q(a)^{-1}\baromega(\bar{x}+\bar{a},a,-a)}{\xrightarrow{\hspace*{3cm}}}
\C_{-a} \otimes \X^{\bar{x}},\\
\X^{\bar{x}}\otimes \X^{\bar{y}}
&\overset{\alpha\,q(t)}{\xrightarrow{\hspace*{3cm}}}
\X^{\bar{y}} \otimes \X^{\bar{x}}
\end{aligned}
\end{equation}
}%
for $a,b\in\Gamma$ and $\bar{x},\bar{y}\in\Gamma/2\Gamma$ with a normalisation factor $\alpha$ given by a choice of square root of
\begin{equation*}
\alpha^2=\eps\,G_{\bar{\delta}}(\Gamma,q^{-1})
\end{equation*}
with the partial Gauss sum
\begin{equation*}
G_{\bar{\delta}}(\Gamma,q^{-1})\coloneqq |2\Gamma|^{-1/2}\sum_{{a\in \bar{\delta}}} q(a)^{-1}.
\end{equation*}
Here, $t$ is again the summation index in the tensor product $\X^{\bar{x}}\otimes\X^{\bar{y}}$ and $\X^{\bar{y}}\otimes\X^{\bar{x}}$.

We study the appearing partial Gauss sum $G_{\bar{\delta}}(\Gamma,q^{-1})$ in more detail. Recall that the sign $s=\pm$ (not to be confused with the signature) is well-defined for a discriminant form as long as the latter does not contain the Jordan component~$2_t^{\pm1}$. Indeed, for the Jordan components $q^{\pm n}$ ($q$ a power of an odd prime) and $q_t^{\pm n}$ and $q_{\II}^{\pm n}$ ($q$ a power of an even prime) the sign $s$ is nothing but the sign in the exponent, and then extends to the whole discriminant form multiplicatively, but for $q=2$ there are the exceptional isomorphisms $2_1^{+1}\cong 2_5^{-1}$ and $2_3^{-1}\cong 2_7^{+1}$, meaning that the sign is in general not well-defined (see also \autoref{tab_disc}). Also recall that $(\frac{\cdot}{\cdot})\in\{\pm1\}$ denotes the Kronecker symbol.
\begin{prop}\label{conj_partialGauss1}
Given a discriminant form $(\Gamma,Q)$ and a choice of representing abelian $3$-cocycle $(\sigma,\omega)$ as in \autoref{ass_strongdisc}, as well as a choice $(q,\bar{\delta})$ as in \autoref{prop_ass_q}, the above partial Gauss sum $G_{\bar{\delta}}(\Gamma,q^{-1})$ is well-defined (i.e.\ independent of these choices) if and only if $\Gamma$ does not contain a Jordan component $2_t^{\pm1}$ in its orthogonal decomposition, in which case $\bar{\delta}=0\in\Gamma/2\Gamma$ and the sum evaluates to
\begin{equation*}
G_0(\Gamma,q^{-1})=|2\Gamma|^{-1/2}\sum_{{a\in 2\Gamma}} q(a)^{-1}=\e(-\sign(\Gamma)/8)\,s(\Gamma_{\mathrm{ev}})\left(\frac{2}{|\Gamma_\mathrm{odd}|}\right),
\end{equation*}
where $\Gamma=\Gamma_\mathrm{ev}\oplus\Gamma_\mathrm{odd}$ is the decomposition of $\Gamma$ into the $2$-adic part and the $p$-adic part for all odd primes $p$.
\end{prop}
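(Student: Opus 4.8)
The plan is to reduce everything to the indecomposable Jordan components of $(\Gamma,Q)$ from \autoref{tab_disc}. Both sides behave multiplicatively under orthogonal direct sums: if $\Gamma=\Gamma_1\oplus\Gamma_2$ with $q=q_1q_2$ and $\delta=\delta_1+\delta_2$, then $|2\Gamma|=|2\Gamma_1|\,|2\Gamma_2|$ and the coset $\delta$ factors as a product of cosets, giving $G_\delta(\Gamma,q^{-1})=G_{\delta_1}(\Gamma_1,q_1^{-1})\,G_{\delta_2}(\Gamma_2,q_2^{-1})$; on the claimed value the signature is additive while $s$ and the Kronecker symbol are multiplicative, and $\Gamma_{\mathrm{ev}},\Gamma_{\mathrm{odd}}$ split accordingly. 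Since whether a $2_t^{\pm1}$ block occurs is detected component-wise, it suffices to treat each indecomposable component separately.

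For the well-definedness, I would first recall from the description of $\delta$ after \autoref{prop_ass_q} that $\delta=0$ holds exactly when $\Gamma$ has no $2_t^{\pm1}$ component, while otherwise $\delta\neq 0$ in $\Gamma/2\Gamma$ is forced (checked directly for $\Z_2$ in the proof of \autoref{prop_ass_q}). Now, if two admissible data share the same $(\sigma,\omega,\delta)$ but use square roots $q,q'$, then $\chi\coloneqq q'/q$ is, by \autoref{prop_ass_q}(1),(2), a homomorphism $\Gamma\to\{\pm1\}$. Any such $\chi$ is trivial on $2\Gamma$ because $\chi(2b)=\chi(b)^2=1$, hence constant on each coset of $2\Gamma$. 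Thus for $\delta=0$ the sum over the coset $2\Gamma$ is unchanged, whereas for $\delta\neq 0$ one may choose $\chi$ with $\chi(\delta)=-1$ (possible since $\Gamma/2\Gamma$ is an elementary abelian $2$-group and $\delta\neq 0$), and then $G_\delta(\Gamma,(q')^{-1})=\chi(\delta)^{-1}G_\delta(\Gamma,q^{-1})=-G_\delta(\Gamma,q^{-1})$. This sum is nonzero, as one sees on the isolated $2_t^{\pm1}$ block where it reduces to the single term $q(1)^{-1}=\pm\e(-t/8)$, so the value genuinely flips sign. This proves the ``only if'' direction; independence from the remaining choice of $(\sigma,\omega)$ in the case $\delta=0$ will follow a posteriori from the explicit intrinsic evaluation.

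To evaluate $G_0$ when $\delta=0$, I argue per component. For an odd block $(p^k)^{\pm1}$ one has $2\Gamma=\Gamma$ and $q=Q^{1/2}$, so $G_0$ is the full Gauss sum $G(\Gamma,q^{-1})$, which by \autoref{rem_Gauss} equals $\left(\frac{2}{|\Gamma|}\right)\e(-\sign(\Gamma)/8)$, matching the $\Gamma_{\mathrm{odd}}$-contribution. For a $2$-adic block with $\delta=0$, namely $(2^k)_t^{\pm1}$ with $k\geq 2$ and $(2^k)_{\II}^{\pm2}$ with $k\geq 1$, I restrict the explicit square root $q$ from \autoref{ex_square} to $2\Gamma$: for $a,b\in 2\Gamma$, using that $\omega(\cdot,-,-)$ factors through $\Gamma/2\Gamma$ in its first slot, \autoref{prop_ass_q}(2) gives $q(a+b)q(a)^{-1}q(b)^{-1}=\sigma(a,b)$, so $q^{-1}|_{2\Gamma}$ is a genuine quadratic form with square $Q^{-1}|_{2\Gamma}$. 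Then $|2\Gamma|^{-1/2}\sum_{a\in 2\Gamma}q(a)^{-1}$ is the quadratic Gauss sum of $q^{-1}|_{2\Gamma}$, which I evaluate directly from the formulas for $q^0_{2n}$ and $q_k^{\pm}$ in \autoref{ex_square}; each reduces to a standard one- or two-dimensional quadratic Gauss sum.

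The main obstacle is the sign bookkeeping in this final step: one must show that each evaluated $2$-adic Gauss sum equals $\e(-\sign(\Gamma)/8)\,s(\Gamma)$, and in particular explain why the even part contributes the \emph{sign} $s(\Gamma_{\mathrm{ev}})$ rather than a Kronecker symbol, while the odd part contributes $\left(\frac{2}{|\Gamma_{\mathrm{odd}}|}\right)$. The delicate point is that restriction to $2\Gamma$ may make $B_Q|_{2\Gamma}$ degenerate (e.g.\ for $(4)_t^{+1}$, where $2\Gamma\cong\Z_2$ carries a nondegenerate $q^{-1}$ but trivial $B_Q$), so Milgram's formula must be applied carefully to the radical. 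Moreover the exceptional isomorphisms $2_1^{+1}\cong 2_5^{-1}$ and $2_3^{-1}\cong 2_7^{+1}$ are precisely what render $s$ ill-defined on the $2_t^{\pm1}$ blocks, which dovetails with the well-definedness statement. Checking that all these signs assemble into the stated closed form, consistently across the non-uniqueness of the Jordan decomposition, is the bulk of the remaining (routine but delicate) computation.
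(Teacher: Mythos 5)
Your proposal follows essentially the same route as the paper: reduce to indecomposable Jordan components via multiplicativity of the partial Gauss sum under orthogonal sums, invoke \autoref{rem_Gauss} for the odd $p$-adic blocks, and evaluate the $2$-adic blocks by direct computation from the explicit $q$'s of \autoref{ex_square} (the paper likewise leaves those final evaluations as a routine check). Your explicit argument for the failure of well-definedness on $2_t^{\pm1}$ blocks, via the sign character $\chi=q'/q$ which is trivial on $2\Gamma$ but can be chosen nontrivial on the coset $\delta$, is a welcome elaboration of what the paper only illustrates by example, and is correct.
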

\autoref{conj_partialGauss1} generalises the Gauss sum $G(\Gamma,q^{-1})$ in \autoref{sec_TY}, see equation~\eqref{eq_gauss} and \autoref{rem_Gauss}.
\begin{proof}
A proof is obtained by proving the assertion for all indecomposable Jordan components. For the $p$-adic components for odd primes $p$, the assertion is simply \autoref{rem_Gauss}. It is not difficult to compute the Gauss sum for the indecomposable $2$-adic Jordan components except for $2_t^{\pm1}$ and see that the assertion holds.
\end{proof}

For $2_1^{+1}\cong 2_5^{-1}$ with signature $1\pmod{8}$, the above partial Gauss sum evaluates to $\e(7/8)$ or $\e(3/8)$, depending on the choice of $q$. These are exactly the values one would obtain from the formula in \autoref{conj_partialGauss1} by inserting either $1$ or $-1$ for the sign $s$. Similarly, for $2_3^{-1}\cong 2_7^{+1}$ with signature $7\pmod{8}$, the partial Gauss sum yields $\e(5/8)$ or $\e(1/8)$, again compatible with \autoref{conj_partialGauss1}.

Finally, in view of \autoref{conj_partialGauss2}, we mention that if $\Gamma=L^*\!/L$ comes from an even lattice $L=\sqrt{2}K$ where $K$ is itself even, then the sign factors in \autoref{conj_partialGauss1} cancel and the partial Gauss sum evaluates exactly to $\e(-\sign(\Gamma)/8)$. Moreover, if $L=\sqrt{2}K$ where $K$ is only integral (in which case Jordan components $\smash{2_t^{\pm n}}$ appear so that the partial Gauss sum is not well-defined), then the choice of $q$ given in \autoref{defi_latqqbar} fixes the value of the Gauss sum, which still evaluates to $\e(-\sign(\Gamma)/8)$.

\begin{rem}\label{rem_gauss}
Alternatively, in order to prove \autoref{conj_partialGauss1}, we note once again that all indecomposable Jordan components appear in discriminant forms $\Gamma=L^*\!/L$ for even lattices $L=\sqrt{2}K$ where $K$ is integral. In fact, since we are excluding $2_t^{\pm1}$, it suffices to consider $L=\sqrt{2}K$ for $K$ even. Then the Gauss sum $G_0(\Gamma,q^{-1})$ takes the simple form in \autoref{conj_partialGauss2}, allowing us to infer the value of the Gauss sum for all indecomposable Jordan components.

For example, suppose that $q=2^k$ for $k\geq2$. Then the even lattice $\sqrt{q/2}A_1$ has the discriminant form $\Gamma=q_1^{+1}$, whose signature is $1\pmod{8}$. By \autoref{conj_partialGauss2}, the Gauss sum equals $G_0(\Gamma,q^{-1})=\e(7/8)$ (cf.\ \autoref{ex_gauss}).
\end{rem}

\medskip

Finally, we will see that the following $\Z_2$-ribbon twist turns $\cC$ into a $\Z_2$-crossed ribbon fusion category $\LM(\Gamma,\sigma,\omega,\bar{\delta},\eps\,|\,q,\alpha,\beta)$: for $a\in\Gamma$ and $\bar{x}\in\Gamma/2\Gamma$
\begin{equation}\label{eq_Z2-ribbon}
\begin{aligned}
\C_a
&\overset{\sigma(a, a)}{\xrightarrow{\hspace*{2cm}}}
\C_a, \\
\X^{\bar{x}}
&\overset{\beta}{\xrightarrow{\hspace*{2cm}}}
g_*\X^{\bar{x}}
\end{aligned}
\end{equation}
for a choice $\beta=\pm\alpha^{-1}$. We point out that the ribbon twist takes the same value on all the objects $\X^{\bar{x}}$ for $\bar{x}\in\Gamma/2\Gamma$ (cf.\ \autoref{rem_minusoneaction}).

\medskip

Summarising the above, we state the main result of this section (cf.\ \autoref{thm_TYribbon}, \autoref{cor_TYpseudounitary} and \autoref{cor_TY}). The proof is given in \autoref{sec_app}.
\begin{thm}\label{thm_evenTY}
Let $G=\langle g\rangle\cong\Z_2$. The data given above define a braided $\Z_2$-crossed tensor category
\begin{equation*}
\LM(\Gamma,\sigma,\omega,\bar{\delta},\eps\,|\,q,\alpha,\beta)=\Vect_\Gamma\oplus\Vect_{\Gamma/2\Gamma},
\end{equation*}
which is a braided $\Z_2$-crossed extension of the modular tensor category $\smash{\Vect_\Gamma^Q}$ for a discriminant form $(\Gamma,Q)$ with representing abelian $3$-cocycle satisfying \autoref{ass_strongdisc}, with the categorical $\Z_2$-action being $g_*\C_a=\C_{-a}$ with tensor structure $g_*(\C_a\otimes\C_b)\to g_*\C_a\otimes g_*\C_b$ given by $\omega(a,b,-b)$ for $a,b\in\Gamma$. The ribbon twist defined above yields positive quantum dimensions if and only if $\alpha\beta=\eps$.
\end{thm}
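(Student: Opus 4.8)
The plan is to exploit the fact that $\cC$ is skeletal with at most one-dimensional morphism spaces between simple objects, all multiplicities being absorbed into the direct-sum decompositions of the $\X\otimes\X$-type products indexed by cosets of $2\Gamma$. Consequently every coherence diagram collapses, on each simple summand, to a scalar identity among values of $\sigma$, $\omega$, $q$, $\alpha$ and $\beta$. Concretely, I would fix the bases $e_t\in\C_t$ of the summands of $\X^{\bar x}\otimes\X^{\bar y}=\bigoplus_{t\in\delta+\bar x+\bar y}\C_t$ and track how each structure scalar acts on them. The role of the hypotheses is precisely to supply the relations that force these identities: \autoref{ass_strongdisc} gives $\sigma^2=B_Q$, symmetry and normalisation of $\sigma$, the $3$-cocycle property of $\omega=\partial(\sigma)$, and the homomorphism property of $\omega(\cdot,b,c)$ into $\{\pm1\}$ (items~\ref{item_prop3} and~\ref{item_prop4}), while \autoref{prop_ass_q} supplies $q(a+b)q(a)^{-1}q(b)^{-1}=\sigma(a,b)\omega(a+b,a,b)\omega(\delta,a,b)$.

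First I would establish that $\cC=\LM(\Gamma,\sigma,\omega,\delta,\eps)$ is a tensor category by verifying the pentagon axiom, organised according to the number of $\X$-type factors among the four tensorands. With zero $\X$'s the pentagon is the plain $3$-cocycle condition for $\omega$; with one or two $\X$'s it reduces, after cancelling telescoping products of $\omega$ and $\sigma$, to the homomorphism property of $\omega$ in its first slot, the relation in item~\ref{item_prop4}, and symmetry of $\sigma$. The genuinely new cases are three $\X$'s (with one $\C$) and four $\X$'s: here the $\X\X\X$-associator $\eps|2\Gamma|^{-1/2}\sigma(t,r)^{-1}$ enters, and closing the pentagon requires the character-sum evaluation of \autoref{lm_characterSum}, which also secures the zigzag identities for the rigid structure with $\eval_\X=\eps|2\Gamma|^{1/2}\pi_{\C_0}$ and $\coeval_\X=\iota_{\C_0}$. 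I expect these $\X$-heavy pentagons to be the main obstacle, as they are the only places where a nontrivial sum over the coset $\delta+\bar x+\bar y$ must be \emph{resummed} rather than merely cancelled termwise.

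Next I would check the $\Z_2$-action: the two diagrams of \autoref{def_Gcrossed}(iii), compatibility of $\tau^g$ with the associator and with the strict composition $T_2^{g,g}=\id$, again reduce to identities in $\omega$ evaluated at reflected arguments such as $\omega(a,b,-b)$, which follow from the $3$-cocycle and homomorphism properties. Grading compatibility $g_*\cC_h\subset\cC_{ghg^{-1}}=\cC_h$ is immediate since $G$ is abelian. For the $G$-braiding I would verify conditions (a)--(c) of the definition of a braided $G$-crossed tensor category; condition~(b) reduces to identities among $\sigma$, $q$ and $\omega$, while the two hexagon-type diagrams of condition~(c) are where \autoref{prop_ass_q} and the normalisation $\alpha^2=\eps\,G_\delta(\Gamma,q^{-1})$ are consumed, the latter ensuring that the $\X\X$-braiding scalar $\alpha\,q(t)$ is consistent across the associators (exactly as the Gauss sum functions in \autoref{thm_TYribbon}). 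Restricting everything to $\cC_1$ recovers the braiding $\sigma(a,b)$ with self-braiding $\sigma(a,a)=Q(a)$ and the action $g_*\C_a=\C_{-a}$ with tensor structure $\omega(a,b,-b)$, so $\cC_1=\smash{\Vect_\Gamma^{\sigma,\omega}}$ represents $\smash{\Vect_\Gamma^Q}$ with the stated categorical $\Z_2$-action, and $\cC$ is a braided $\Z_2$-crossed extension as claimed.

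Finally I would verify that the proposed twist, $\theta_{\C_a}=\sigma(a,a)=Q(a)$ and $\theta_{\X^{\bar x}}=\beta$ with $\beta=\pm\alpha^{-1}$, satisfies the $G$-twist diagrams \eqref{eq_G-twist} and \eqref{eq_G-invarianza} and the $G$-ribbon diagram \eqref{eq_selft-duality}; as before these collapse to scalar relations among $\beta$, $\alpha$, $q$, $\sigma$ and $\omega$, checked componentwise on the $\X\X$-summands. For the positivity claim I would compute the quantum dimension attached to this $G$-ribbon directly: $\qdim(\C_a)=1$, while the zigzag data $\eval_\X$, $\coeval_\X$ together with the twist $\beta$ yield $\qdim(\X^{\bar x})=\eps\,\alpha\beta\,|2\Gamma|^{1/2}$, which is positive precisely when $\alpha\beta=\eps$ (recall $\alpha\beta\in\{\pm1\}$), exactly as in \autoref{cor_TYpseudounitary}. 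This is moreover consistent with \autoref{cor_pseudo}(1): since $\cC_1=\smash{\Vect_\Gamma^Q}$ is pointed and hence pseudo-unitary, $\cC$ admits a unique spherical structure realising the Frobenius-Perron dimensions, and exactly one of the two choices $\beta=\pm\alpha^{-1}$, namely $\alpha\beta=\eps$, selects it. I expect this last computation to be routine once the $\X$-heavy pentagon and the Gauss-sum normalisation are in place.
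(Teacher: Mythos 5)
Your proposal is correct and follows essentially the same route as the paper: an exhaustive, case-by-case verification of the pentagon identities organised by the number of $\X$-factors (with the character sum of \autoref{lm_characterSum} resolving the four-$\X$ pentagon and the zigzag identities), then the compatibility of the $\Z_2$-tensor structures with the associators, the hexagon identities consuming the defining relations of $q$ and the normalisation $\alpha^2=\eps\,G_\delta(\Gamma,q^{-1})$, and finally the $G$-ribbon conditions together with the computation $\qdim(\X^{\bar x})=\eps\,\alpha\beta\,|2\Gamma|^{1/2}$. The only slight imprecision is that the character-sum resummation is needed only for the four-$\X$ pentagon, not for the three-$\X$ cases, which cancel termwise.
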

In particular, $\LM(\Gamma,\sigma,\omega,\bar{\delta},\eps\,|\,q,\alpha,\beta)$ is pseudo-unitary; cf.\ \autoref{cor_pseudo}.

Note that if $\Gamma$ is odd and we choose $\sigma=B^{1/2}$, $\omega=1$ and $q=Q^{1/2}$, then the definition exactly reduces to the Tambara-Yamagami category in \autoref{sec_TY}.

\begin{prob}\label{prob_ENO}
By the results in \cite{ENO10,DN21}, for each $\eps$, there is a unique braided $\Z_2$-crossed extension $\smash{\Vect_\Gamma^Q[\Z_2,\eps]}$ of $\smash{\Vect_\Gamma^Q}$ (say with a ribbon structure that has positive quantum dimensions) with the given $\Z_2$-action (which depended on $\omega$), i.e.\ independent of the choice of $(q,\bar{\delta})$. Can we verify this directly from the construction of $\LM(\Gamma,\sigma,\omega,\bar{\delta},\eps\,|\,q,\alpha,\eps/\alpha)$? Cf.\ \autoref{cor_TY}. The independence of the sign of $\alpha$ follows as in \autoref{rem_alphaEquivalence}.
\end{prob}

\begin{rem}\label{rem_minusoneaction}
We point out that the braided $\Z_2$-crossed extension in \autoref{thm_evenTY} corresponds to a certain categorical action of $G=\langle g\rangle\cong\Z_2$ on $\smash{\Vect_\Gamma^Q}$, which in particular permutes the simple objects as $g_*\C_a=\C_{-a}$ for all $a\in\Gamma$.

However, not all $\Z_2$-actions with the latter property necessarily give the same extension. Indeed, if we consider the special case of discriminant forms $\Gamma$ with $2\Gamma=\{0\}$, then any $\Z_2$-action fixes the objects of $\smash{\Vect_\Gamma^Q}$, which are indexed by $\Gamma$, but our action is \emph{not} the trivial action, as can be seen by looking at the tensor structure (see also \autoref*{GLM1exm_counterexampleA1} in \cite{GLM24a}).

Our $\Z_2$-action for $2\Gamma=\{0\}$ and the truly trivial $\Z_2$-action both produce braided $\Z_2$-crossed extensions of the form $\cC_1\oplus\C_g=\smash{\Vect_\Gamma\oplus\Vect_\Gamma}$ as abelian categories, but they differ in other aspects. For instance, for our $\Z_2$-action the ribbon twists are the same for all objects in $\cC_g=\smash{\Vect_\Gamma}$, while this will not typically be the case for the trivial $\Z_2$-action.
\end{rem}


\subsection{Equivariantisation}\label{sec_equiv}

In this section, we consider the equivariantisation
\begin{equation*}
\LM(\Gamma,\sigma,\omega,\bar{\delta},\eps \,|\, q,\alpha,\beta) \sslash \Z_2
\end{equation*}
of the braided $\Z_2$-crossed tensor category from \autoref{thm_evenTY}, which is a (pseudo-unitary) modular tensor category. In particular, we determine the simple objects, the fusion rules and the modular data, i.e.\ the $\SMatrix$- and $\TMatrix$-matrix.

\medskip

First, similarly to \autoref{prop_TY_modular}, we show:
\begin{prop}\label{prop_equimod}
\!The equivariantisation $\LM(\Gamma,\sigma,\omega,\bar{\delta},\eps \,|\, q,\alpha,\beta)\sslash\Z_2$ has a nondegenerate braiding, and hence is a modular tensor category.
\end{prop}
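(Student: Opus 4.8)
The plan is to reduce modularity of the equivariantisation to the nondegeneracy of the neutral component $\cC_1$ together with the existence of a ribbon structure, both of which are already available from the main theorem and the general theory.

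First I would note that, by \autoref{thm_evenTY}, the category $\cC=\LM(\Gamma,\sigma,\omega,\delta,\eps\,|\,q,\alpha,\beta)$ is a braided $\Z_2$-crossed \emph{ribbon fusion} category whose neutral component is $\cC_1=\smash{\Vect_\Gamma^Q}$. Since $(\Gamma,Q)$ is a discriminant form, the quadratic form $Q$, and hence its associated bimultiplicative form $B_Q$, is nondegenerate; by the discussion in \autoref{ex_braidedVect} this means precisely that the braiding on $\smash{\Vect_\Gamma^Q}$ is nondegenerate. This is the decisive structural point distinguishing the present situation from the classical Tambara-Yamagami case of \autoref{prop_TY_modular}: there the neutral component is $\smash{\Vect_\Gamma^{q^2}}$, whose quadratic form $q^2$ degenerates exactly when $|\Gamma|$ is even, whereas here $\cC_1$ is nondegenerate by construction for \emph{arbitrary} $\Gamma$.

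Next I would invoke the general principle, recorded in \autoref{sec_Gcrossed} (cf.\ \cite{DGNO10}), that the equivariantisation of a braided $G$-crossed fusion category has nondegenerate braiding if and only if its neutral component does. Applying this with $G=\Z_2$ and the nondegeneracy of $\cC_1$ just established yields that $\cC\sslash\Z_2$ has nondegenerate braiding. Finally, since $\cC$ carries the $\Z_2$-ribbon structure from \autoref{thm_evenTY}, \autoref{lem_ribbon} shows that this descends to a genuine ribbon structure on the fusion category $\cC\sslash\Z_2$; a ribbon fusion category with nondegenerate braiding is by definition a modular tensor category, completing the argument.

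I do not expect a serious obstacle here: the heavy lifting is done by \autoref{thm_evenTY}, the nondegeneracy criterion recalled in \autoref{sec_Gcrossed}, and \autoref{lem_ribbon}. The only point requiring care is the nondegeneracy criterion itself; should one wish to argue in a self-contained manner, one could instead verify directly that the Müger centre of $\cC\sslash\Z_2$ is trivial, using the dimension count $\dim(\cC\sslash\Z_2)=|\Z_2|\dim(\cC)=|\Z_2|^2\dim(\cC_1)=4|\Gamma|$ (from \autoref{cor_pseudo} and \autoref{prop_global dimension of grraded}) together with the invertibility of the $\SMatrix$-matrix computed in \autoref{prop_equivmodular}.
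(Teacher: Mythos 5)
Your proposal is correct and follows essentially the same route as the paper: nondegeneracy of the neutral component $\smash{\Vect_\Gamma^Q}$ plus the criterion from \cite{DGNO10} (Proposition~4.56, recorded in \autoref{sec_Gcrossed}) gives nondegeneracy of the equivariantisation, and \autoref{lem_ribbon} supplies the ribbon structure. The paper additionally notes explicitly that the grading is faithful, which is part of the hypothesis of the DGNO criterion, but this is automatic from the definition of a $G$-grading used here.
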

\begin{proof}
As proved in Proposition~4.56 of \cite{DGNO10}, the equivariantisation $\cC\sslash G$ of a braided $G$-crossed tensor category $\cC$ is nondegenerate if and only if the grading is faithful and $\cC_1$ is nondegenerate. Since $\LM(\Gamma,\sigma,\omega,\bar{\delta},\eps \,|\, q,\alpha,\beta)$ is faithfully graded and the trivial sector is nondegenerate by construction, its $\Z_2$-equivariantisation is hence nondegenerate. Furthermore, since it has a $\Z_2$-ribbon structure, the equivariantisation is modular.
\end{proof}

We describe the simple objects of $\LM(\Gamma, \sigma, \omega, \bar{\delta}, \eps \,|\, q, \alpha, \beta)\sslash\Z_2$. Consider the exact sequence $\{0\} \to \Gamma_2 \to \Gamma \smash{\overset{2}{\to}} 2\Gamma \to \{0\}$, where $\Gamma_2 = \{a \in \Gamma \mid 2a = 0\}$. In particular, $2\Gamma \cong \Gamma / \Gamma_2$ and $|2\Gamma||\Gamma_2| = |\Gamma|$.

\begin{prop}\label{prop_eqivsimple}
The simple objects of $\LM(\Gamma, \sigma, \omega, \bar{\delta}, \eps \,|\, q, \alpha, \beta)\sslash\Z_2$, up to isomorphism, are given by:
\begin{enumerate}
\item $2|\Gamma_2|$ invertible objects (of quantum dimension~$1$) indexed by $\Gamma_2 \times \{\pm 1\}$. They are given by $X_{a,s} \coloneqq \C_a$ with equivariant structure $\eqi_g = s \id_{\C_a}$ for $a\in\Gamma_2$ and $s\in\{\pm 1\}$.

\item $\frac{1}{2}(|\Gamma| - |\Gamma_2|) = \frac{1}{2}|\Gamma|(1 - 1/|2\Gamma|)$ simple objects of quantum dimension~$2$ indexed by the set of unordered pairs $\{\{a,-a\} \mid a \in \Gamma \setminus \Gamma_2\}$. They are given by $Y_{\{a, -a\}} \coloneqq \C_a \oplus \C_{-a}$ for $a$ in $\Gamma \setminus \Gamma_2$, with the equivariant structure given by interchanging the direct summands.

\item $2|\Gamma_2|$ simple objects of quantum dimension $(|\Gamma|/|\Gamma_2|)^{1/2}=|2\Gamma|^{1/2}$ indexed by $\Gamma/2\Gamma \times \{\pm 1\}$. They are given by $Z_{\bar{x}, s}\coloneqq\X^{\bar{x}}$ with equivariant structure $\eqi_g=s \id_{\X^{\bar{x}}}$ for $\bar{x}\in\Gamma/2\Gamma$ and $s\in\{\pm 1\}$.
\end{enumerate}
\end{prop}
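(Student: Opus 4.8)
The plan is to invoke the general classification of simple objects in a $\Z_2$-equivariantisation recalled in \autoref{ex_classification equivariant simple order 2}. This reduces the problem to two pieces of data: the orbits of $G=\langle g\rangle$ on $\Irr(\cC)$, and, for each fixed point, the cohomology class of the $2$-cocycle $\chi\in Z^2(\Z_2,\C^\times)$ defined in \eqref{eq_equivariant}. Each free orbit $\{X,g_*X\}$ contributes a single simple equivariant object $X\oplus g_*X$, while each fixed point contributes exactly two nonisomorphic equivariant structures $\eqi_g^{(\pm)}=\pm\gamma\,t_g$ with $\gamma=\chi(g,g)^{-1/2}$.

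First I would determine the orbits. On the neutral sector, $g_*\C_a=\C_{-a}$, so $\C_a$ is fixed precisely when $a\in\Gamma_2$ and otherwise lies in a two-element orbit $\{a,-a\}$; this yields $|\Gamma_2|$ fixed points and $\tfrac{1}{2}(|\Gamma|-|\Gamma_2|)$ free orbits. On the nontrivial sector, $g_*\X^{\bar x}=\X^{\bar x}$, so every one of the $|\Gamma/2\Gamma|=|\Gamma_2|$ objects $\X^{\bar x}$ is fixed. The free orbits $\{a,-a\}$ give the simple objects $Y_{\{a,-a\}}=\C_a\oplus\C_{-a}$, with the equivariant structure interchanging the two summands, as claimed in item~(2).

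The key computation is the cocycle $\chi$ for the fixed objects. Since $g_*$ fixes $\C_a$ (for $a\in\Gamma_2$) and $\X^{\bar x}$ literally as objects, I can take $t_g=\id$ in each case; because the $\Z_2$-action was defined with strict composition, $T_2^{g,g}=\id$, and \eqref{eq_equivariant} collapses to $\chi(g,g)\id=t_g\circ g_*(t_g)\circ T_2^{g,g}=\id$. Hence $\gamma=1$ and the two equivariant structures are exactly $\eqi_g^{(\pm)}=\pm\id$, giving the objects $X_{a,s}$ and $Z_{\bar x,s}$ of items~(1) and~(3). The quantum dimensions---equal to the Frobenius-Perron dimensions by pseudo-unitarity (cf.\ \autoref{cor_pseudo})---are read off from the underlying objects, which the forgetful functor preserves: $1$ for the $\C_a$, $1+1=2$ for $\C_a\oplus\C_{-a}$, and $\operatorname{FPdim}(\X^{\bar x})=|2\Gamma|^{1/2}=(|\Gamma|/|\Gamma_2|)^{1/2}$, the last value following since $\X^{\bar x}\otimes(\X^{\bar x})^*$ is a sum of $|2\Gamma|$ invertibles.

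The points requiring care are completeness and bookkeeping rather than any hard estimate. I would confirm exhaustiveness by checking that the tallied Frobenius-Perron dimensions match $\operatorname{FPdim}(\cC\sslash\Z_2)=2\operatorname{FPdim}(\cC)=4|\Gamma|$ coming from \autoref{cor_pseudo}: the contributions $2|\Gamma_2|\cdot 1+\tfrac{1}{2}(|\Gamma|-|\Gamma_2|)\cdot 4+2|\Gamma_2|\cdot|2\Gamma|$ indeed sum to $4|\Gamma|$ upon using $|\Gamma_2||2\Gamma|=|\Gamma|$. The only genuine subtlety is the triviality $\chi(g,g)=1$: had the composition structure been the nonstrict one (cf.\ \autoref{thm_TYribbon}, item~(2)), one would instead obtain $\gamma$ a primitive fourth root of unity, altering the explicit form of the equivariant structures---so the strictness of $T_2^{g,g}$ is essential here.
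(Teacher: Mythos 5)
Your proposal is correct and follows exactly the route the paper takes: the paper's proof consists of the single sentence that the assertion follows from \autoref{ex_classification equivariant simple order 2}, and you have simply unpacked that application (orbit count, triviality of $\chi(g,g)$ via the strict composition $T_2^{g,g}=\id$, and the dimension bookkeeping). All the details you supply, including the Frobenius--Perron consistency check $2|\Gamma_2|+2(|\Gamma|-|\Gamma_2|)+2|\Gamma_2||2\Gamma|=4|\Gamma|$, are accurate.
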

\begin{proof}
The assertion follows directly from an application of \autoref{ex_classification equivariant simple order 2}.
\end{proof}

Next, we determine the fusion rules among the simple objects of the equivariantisation. We know that $\sigma(a, -b)\sigma(a, b) = \omega(a, b, -b)$ for $a,b\in\Gamma$. In particular, for $b \in \Gamma_2$, $\omega(a, b, -b) = \sigma(a,b)^2 = B_Q(a,b)$ for $a\in\Gamma$. Moreover, it defines a group homomorphism $\omega(\cdot,a, -a)\colon \Gamma/2\Gamma \to \{\pm 1\}$ for each $a\in\Gamma$, or $B_Q(\cdot,a)\colon \Gamma/2\Gamma \to \{\pm 1\}$ for $a\in\Gamma_2$.

\begin{prop}\label{prop_equi_fusion}
The fusion rules of $\LM(\Gamma, \sigma, \omega, \bar{\delta}, \eps \,|\, q, \alpha, \beta)\sslash\Z_2$ are:
{\allowdisplaybreaks
\begin{align*}
X_{a,s} \otimes X_{b,t} &= X_{a+b, st \omega(a,b,-b)}=X_{a+b,stB_Q(a,b)},\\
X_{a,s} \otimes Y_{\{b,-b\}} &= Y_{\{a+b,a-b\}},\\
X_{a,s} \otimes Z_{\bar{x}, r} &= Z_{\bar{a} + \bar{x}, sr \baromega(\bar{a}+\bar{x},a,-a)}=Z_{\bar{a} + \bar{x}, sr B_Q(\bar{a}+\bar{x},a)}, \\
Y_{\{a,-a\}} \otimes Z_{\bar{x}, r} &= Z_{\bar{x}+\bar{a}, 1} \oplus Z_{\bar{x}+\bar{a}, -1},\\
Y_{\{b,-b\}} \!\otimes\! Y_{\{c,-c\}} &=
\begin{cases}
X_{b+c,1} \!\oplus\! X_{b+c,-1} \!\oplus\! X_{b-c,1} \!\oplus\! X_{b-c,-1} & \text{\!\!\!if } b+c, b-c \in \Gamma_2, \\
Y_{\{b+c,-b-c\}} \!\oplus\! X_{b-c,1} \!\oplus\! X_{b-c,-1} & \text{\!\!\!if } b+c \notin \Gamma_2, \, b-c \in \Gamma_2, \\
Y_{\{b-c,-b+c\}} \!\oplus\! X_{b+c,1} \!\oplus\! X_{b+c,-1} & \text{\!\!\!if } b+c \in \Gamma_2, \, b-c \notin \Gamma_2, \\
Y_{\{b+c,-b-c\}} \!\oplus\! Y_{\{b-c,-b+c\}} & \text{\!\!\!if } b+c, b-c \notin \Gamma_2,
\end{cases}\\
Z_{\bar{x}, r} \otimes Z_{\bar{y}, s} &= \bigoplus_{\substack{t \in \Gamma_2 \\ \bar{t}=\bar{\delta}+\bar{x}+\bar{y}}} X_{t,rs\baromega(\bar{x},t,-t)} \oplus \bigoplus_{\substack{t \in (\Gamma \setminus \Gamma_2)/2 \\ \bar{t}=\bar{\delta}+\bar{x}+\bar{y}}} Y_{\{t,-t\}}\\
&= \bigoplus_{\substack{t \in \Gamma_2 \\ \bar{t}=\bar{\delta}+\bar{x}+\bar{y}}} X_{t,rsB_Q(\bar{x},t)} \oplus \bigoplus_{\substack{t \in (\Gamma \setminus \Gamma_2)/2 \\ \bar{t}=\bar{\delta}+\bar{x}+\bar{y}}} Y_{\{t,-t\}}.
\end{align*}
}%
Here, $(\Gamma \setminus \Gamma_2)/2$ indicates that we select one of $t$ or $-t$ for each pair $\{t,-t\}\subset \Gamma \setminus \Gamma_2$.
\end{prop}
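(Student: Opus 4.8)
The plan is to obtain every fusion product in three steps: first compute the underlying tensor product of objects in $\cC=\LM(\Gamma,\sigma,\omega,\delta,\eps\,|\,q,\alpha,\beta)$ using the fusion rules of \autoref{sec_evenTY}; then equip the result with the equivariant structure $\eqi''_g=(\eqi_g\otimes\eqi'_g)\circ\tau^g_{X,Y}$ dictated by the tensor product of $\cC\sslash\Z_2$; and finally decompose into the simple equivariant objects of \autoref{prop_eqivsimple} by means of \autoref{ex_classification equivariant simple order 2}. Two elementary facts about $\Z_2=\langle g\rangle$ acting by $-\!\id$ organise the whole computation. First, $\C_t$ is a fixed object precisely when $t\in\Gamma_2$, whereas for $t\notin\Gamma_2$ the objects $\C_t,\C_{-t}$ form a free $g$-orbit; the objects $\X^{\bar x}$ are always fixed since $g_*\X^{\bar x}=\X^{\bar x}$. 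Second, in $\Gamma/2\Gamma$ one always has $\bar t=\overline{-t}$ (because $\bar t+\bar t=\overline{2t}=0$), so both $\C_t$ and $\C_{-t}$ feed into the same graded piece $\X^{\bar t}$.

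For the products whose two factors are fixed objects with scalar equivariant structures, namely $X\otimes X$, $X\otimes Z$, and the $\Gamma_2$-part of $Z\otimes Z$, the equivariant structure on each fixed summand is again a scalar, equal to the product of the two input signs times the relevant value of $\tau^g$. Reading off $\tau^g_{\C_a,\C_b}=\omega(a,b,-b)$, $\tau^g_{\C_a,\X^{\bar x}}=\omega(a+\bar x,a,-a)$ and the component $\omega(\bar x,t,-t)$ of $\tau^g_{\X^{\bar x},\X^{\bar y}}$ on $\C_t$ directly produces the signs $st\,\omega(a,b,-b)$, $sr\,\omega(a+\bar x,a,-a)$ and $rs\,\omega(\bar x,t,-t)$ in the statement. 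The alternative expressions in terms of $B_Q$ follow from the identity $\omega(a,b,-b)=\sigma(a,b)^2=B_Q(a,b)$, valid whenever $b\in\Gamma_2$, which is recorded in the paragraph preceding the proposition.

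For the products involving a free orbit I would argue as follows. In $X_{a,s}\otimes Y_{\{b,-b\}}$ one has $\C_a\otimes(\C_b\oplus\C_{-b})=\C_{a+b}\oplus\C_{a-b}$; since $a\in\Gamma_2$ forces $a+b,a-b\notin\Gamma_2$ and $-(a+b)=a-b$, this is exactly the free orbit $Y_{\{a+b,a-b\}}$. Whenever the underlying object is two copies of a single fixed simple $W$ interchanged by $\eqi''_g$ — this occurs in $Y\otimes Z$ and for the $\Gamma_2$-summands of $Y\otimes Y$ — the $\Z_2$-action is the regular representation and splits off the two characters, giving $W_{+}\oplus W_{-}$; crucially this is independent of the precise scalar, since a swap $\left(\begin{smallmatrix}0&\mu\\\lambda&0\end{smallmatrix}\right)$ with $\lambda\mu=1$ always has eigenvalues $\pm1$. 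For $Y\otimes Y$ the underlying product is $\C_{b+c}\oplus\C_{b-c}\oplus\C_{-b+c}\oplus\C_{-b-c}$, and the four cases of the statement correspond precisely to whether $b\pm c$ lie in $\Gamma_2$ (pairs of equal fixed summands, yielding $X_{+}\oplus X_{-}$) or not (free orbits $Y$); here $b,c\notin\Gamma_2$ guarantees the two resulting $Y$-orbits are distinct. Finally, in $Z_{\bar x,r}\otimes Z_{\bar y,s}=\bigoplus_{\bar t=\delta+\bar x+\bar y}\C_t$ the coset splits into its $\Gamma_2$-elements, each contributing a fixed $X_{t,rs\omega(\bar x,t,-t)}$, and its free pairs $\{t,-t\}$, each contributing one $Y_{\{t,-t\}}$, which is the stated formula.

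I expect the main obstacle to be the sign bookkeeping in the two-sided fixed cases: verifying that the scalar on each fixed summand is exactly the advertised product of input signs with the corresponding $\tau^g$-value, and confirming that in every free-orbit or swap situation the decomposition matches \autoref{prop_eqivsimple} independently of the auxiliary scalars. A useful internal check is preservation of Frobenius–Perron dimension: $Y\otimes Z$ gives $2\cdot|2\Gamma|^{1/2}=|2\Gamma|^{1/2}+|2\Gamma|^{1/2}$, and for $Z\otimes Z$ the coset $\delta+\bar x+\bar y$ of size $|2\Gamma|$ contributes $n$ objects $X$ of dimension $1$ and $(|2\Gamma|-n)/2$ objects $Y$ of dimension $2$, for total $n+(|2\Gamma|-n)=|2\Gamma|=|2\Gamma|^{1/2}\cdot|2\Gamma|^{1/2}$, independently of the number $n$ of $\Gamma_2$-elements in the coset.
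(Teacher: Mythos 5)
Your proposal is correct and follows exactly the route the paper intends: the paper in fact states \autoref{prop_equi_fusion} without a written proof, relying implicitly on the tensor product formula $\eqi''_g=(\eqi_g\otimes\eqi'_g)\circ\tau^g_{X,Y}$ in $\cC\sslash\Z_2$, the decomposition of \autoref{ex_classification equivariant simple order 2}, and the identity $\omega(a,b,-b)=B_Q(a,b)$ for $b\in\Gamma_2$ recorded just before the statement, all of which you use in the same way. Your sign bookkeeping for the fixed summands, the swap argument for free orbits, and the Frobenius--Perron dimension check are all sound.
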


\begin{rem}
We remark that the fusion rules of $\LM(\Gamma, \sigma, \omega, \bar{\delta}, \eps \,|\, q, \alpha, \beta)$ and those of its $\Z_2$-equi\-vari\-an\-ti\-sa\-tion $\LM(\Gamma, \sigma, \omega, \bar{\delta}, \eps \,|\, q, \alpha, \beta)\sslash \Z_2$ do not depend on~$\eps$. This is analogous to Tambara-Yamagami categories. Both $\LM(\Gamma, \sigma, \omega, \bar{\delta}, \eps \,|\, q, \alpha, \beta)$ and $\LM(\Gamma, \sigma, \omega, \bar{\delta}, \eps \,|\, q, \alpha, \beta)\sslash \Z_2$ are $\Z_2$-graded categories. We can twist the associativity constraint using the nontrivial $3$-cocycle $\omega \in Z^3(\Z_2, \C^\times)$ defined by $\omega(1,1,1) = -1$. This twist only alters $\eps$ to $-\eps$, leaving the fusion rules unaffected.
\end{rem}

Finally, we determine the modular data. Recall that, given a ribbon fusion category $\cB$, its \emph{modular data} are defined as the pair of $|\Irr(\cB)|\times|\Irr(\cB)|$-matrices
\begin{equation*}
\SMatrix_{X,Y} = \tr(c_{Y,X} \circ c_{X,Y}),\quad
\TMatrix_{X,Y} = \delta_{X,Y} \theta_X^{-1},
\end{equation*}
where $X, Y \in \Irr(\cB)$.

\begin{prop}\label{prop_equivmodular}
The modular data of $\LM(\Gamma, \sigma, \omega, \bar{\delta}, \eps \,|\, q, \alpha, \beta)\sslash\Z_2$ are given by the $\TMatrix$-matrix with entries
\begin{equation*}
\TMatrix_{X_{a,s},X_{a,s}} = Q(a)^{-1},\quad
\TMatrix_{Y_{\{a,-a\}},Y_{\{a,-a\}}} = Q(a)^{-1},\quad
\TMatrix_{Z_{\bar{x}, s},Z_{\bar{x}, s}} = s\beta^{-1}
\end{equation*}
and the $\SMatrix$-matrix with entries
\begin{align*}
\SMatrix_{X_{a,s},X_{b,t}} &= B_Q(a,b), &
\SMatrix_{X_{a,s},Y_{\{b,-b\}}} &= 2B_Q(a,b), \\
\SMatrix_{X_{a,s},Z_{\bar{x}, r}} &= s|2\Gamma|^{1/2}B_Q(\bar{x},a)Q(a), &
\SMatrix_{Y_{\{a,-a\}}, Z_{\bar{x}, r}} &= 0, \\
\SMatrix_{Y_{\{b,-b\}}, Y_{\{c,-c\}}} &= 2\bigl(B_Q(b,c)+B_Q(b,c)^{-1}\bigr),\\
\SMatrix_{Z_{\bar{x}, r}, Z_{\bar{y}, s}} &= \eps \, rs |2\Gamma|^{1/2}G_{\bar{\delta}}(\Gamma,q^{-1})G_{\bar{\delta}+\bar{x}+\bar{y}}(\Gamma,Q),
\end{align*}
with the Gauss sum $G_{\bar{\delta}+\bar{z}}(\Gamma,Q)=|2\Gamma|^{-1/2}\sum_{a\in \bar{\delta}+\bar{z}}Q(a)$ for $\bar{z}\in\Gamma/2\Gamma$ and the Gauss sum $G_{\bar{\delta}}(\Gamma,q^{-1})$ over $q(a)^{-1}$ from \autoref{conj_partialGauss1}.
\end{prop}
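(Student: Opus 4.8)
The plan is to compute the two pieces of modular data directly from their definitions, exploiting that both the ribbon twist and the braiding on the equivariantisation are explicitly determined by the $\Z_2$-crossed structure of $\cC=\LM(\Gamma,\sigma,\omega,\delta,\eps\,|\,q,\alpha,\beta)$ via \autoref{lem_natural isomorhism} and \autoref{lem_ribbon}. Throughout I would use that the duals, evaluations and coevaluations on $\cC\sslash\Z_2$ are the underlying ones (see the proof of \autoref{lem_ribbon}), so that the forgetful functor $\cC\sslash\Z_2\to\cC$ is ribbon; consequently every categorical trace appearing in the $\SMatrix$-matrix can be evaluated summand-by-summand in $\cC$, as the weighted sum of the scalars by which the relevant double braiding acts on each simple constituent $\C_t$ or $\X^{\bar{x}}$.

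For the $\TMatrix$-matrix I would simply evaluate the twist $\theta_{(X,\eqi)}=\bigoplus_h\eqi_h^{(h)}\circ\theta_{X_h}$ from \autoref{lem_natural isomorhism} on the three families of simple objects of \autoref{prop_eqivsimple}. For $X_{a,s}$ and $Y_{\{a,-a\}}$, which live in $\cC_1$, the equivariant structure at $g=1$ is the identity and $\theta_{\C_a}=Q(a)$, where $Q(-a)=Q(a)$ handles the two summands of $Y$; for $Z_{\bar{x},s}$ one obtains $\theta=s\beta$ from $\theta_{\X^{\bar{x}}}=\beta$ post-composed with $\eqi_g=s\,\id$. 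Taking inverses yields the stated entries $Q(a)^{-1}$ and $s\beta^{-1}$.

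For the $\SMatrix$-matrix I would treat the pairs of simple-object types in turn, in each case writing the equivariant double braiding $\mathbf{c}_{V,U}\circ\mathbf{c}_{U,V}$ as the underlying $G$-crossed braiding of \autoref{sec_evenTY} post-composed with the appropriate equivariant structures according to $\mathbf{c}_{X_g,Y}=(\gamma_g\otimes\id)\circ c_{X_g,Y}$. The purely $\cC_1$ cases ($XX$, $XY$, $YY$) reduce to the double braiding $\sigma(a,b)\sigma(b,a)=B_Q(a,b)$, and the factors of $2$ and the symmetrisation $B_Q(b,c)+B_Q(b,c)^{-1}$ emerge from tracing over the two, respectively four, simple summands; here I would use the key observation that $B_Q(a,b)\in\{\pm1\}$ for $a\in\Gamma_2$, so the monodromy is constant on $\{b,-b\}$. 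The mixed entry $\SMatrix_{X_{a,s},Z_{\bar{x},r}}$ needs the short computation $\mathbf{c}_{Z,X}\mathbf{c}_{X,Z}=s\,q(a)^{-2}\omega(\bar{x}+a,a,-a)$, which I would simplify using $q(a)^2=Q(a)$, the identity $\omega(\cdot,a,-a)=B_Q(\cdot,a)$ for $a\in\Gamma_2$, and $Q(a)^4=1$, before multiplying by $\dim(\X^{\bar{x}})=|2\Gamma|^{1/2}$ (note the dependence on $r$ drops out because $X$ sits in degree $1$).

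The two genuinely $\Z_2$-crossed entries are where I expect the real work, and the main obstacle, to lie. For $\SMatrix_{Y_{\{a,-a\}},Z_{\bar{x},r}}$ I would track how the braiding with the $g$-graded object $Z$ interacts with the summand-swapping equivariant structure of $Y$: following $\mathbf{c}_{Z,Y}\circ\mathbf{c}_{Y,Z}$ through $Y\otimes Z=\X^{\bar{a}+\bar{x}}\oplus\X^{\bar{a}+\bar{x}}$, the $g$-action sends $g_*(\C_a)=\C_{-a}$ to the opposite summand, so the two copies are interchanged; the endomorphism is therefore off-diagonal and its trace vanishes, giving $0$. For $\SMatrix_{Z_{\bar{x},r},Z_{\bar{y},s}}$ the braiding acts on the summand $\C_t$ of $\X^{\bar{x}}\otimes\X^{\bar{y}}=\bigoplus_{t\in\delta+\bar{x}+\bar{y}}\C_t$ by $\alpha\,q(t)$, so the double braiding is $rs\,\alpha^2 q(t)^2=rs\,\alpha^2 Q(t)$ on $\C_t$, and tracing gives $rs\,\alpha^2\sum_{t\in\delta+\bar{x}+\bar{y}}Q(t)$; the final step is to recognise this, via $\alpha^2=\eps\,G_\delta(\Gamma,q^{-1})$ and the definition of $G_{\delta+\bar{z}}(\Gamma,Q)$, as $\eps\,rs\,|2\Gamma|^{1/2}G_\delta(\Gamma,q^{-1})G_{\delta+\bar{x}+\bar{y}}(\Gamma,Q)$. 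The delicate points throughout are the correct bookkeeping of the equivariant structures inside the $G$-crossed braiding and confirming that each scalar-on-every-summand endomorphism has the claimed trace; once the swapping phenomenon in the $YZ$ case is isolated, the remaining identities are routine manipulations with the properties of $(\sigma,\omega,q)$ from \autoref{ass_strongdisc} and \autoref{prop_ass_q}.
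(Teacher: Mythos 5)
Your proposal is correct, but it takes a genuinely different route from the paper. You compute each $\SMatrix$-entry as the categorical trace of the equivariant double braiding, evaluated summand-by-summand in $\cC$ using the explicit $G$-crossed braiding of \autoref{sec_evenTY} together with the equivariant structures; the paper instead first computes the twists via \autoref{lem_ribbon} (as you also do) and then obtains every $\SMatrix$-entry purely numerically from the balancing formula $\SMatrix_{X,Y}=\theta_X^{-1}\theta_Y^{-1}\sum_Z N_{X,Y}^Z\,\theta_Z\dim(Z)$ applied to the fusion rules of \autoref{prop_equi_fusion}. The trade-off: the paper's route is shorter and requires no braiding bookkeeping at all --- for instance the vanishing of $\SMatrix_{Y_{\{a,-a\}},Z_{\bar{x},r}}$ falls out as the cancellation $|2\Gamma|^{1/2}\beta-|2\Gamma|^{1/2}\beta=0$ of the two twists $\pm\beta$ in $Z_{\overline{x+a},1}\oplus Z_{\overline{x+a},-1}$ --- but it presupposes \autoref{prop_equi_fusion}, which the paper states without proof. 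Your route is self-contained modulo the explicit braiding data, at the cost of the more delicate points you correctly isolate: that only the equivariant structure of the degree-$1$ partner enters the monodromy with a $g$-graded object (so $r$ drops out of $\SMatrix_{X_{a,s},Z_{\bar{x},r}}$), that $B_Q(a,\cdot)\in\{\pm1\}$ for $a\in\Gamma_2$ is what produces the factors of $2$, and that the summand-swapping equivariant structure of $Y_{\{a,-a\}}$ makes the $YZ$-monodromy off-diagonal, hence traceless. Your final Gauss-sum manipulation for $\SMatrix_{Z_{\bar{x},r},Z_{\bar{y},s}}$ agrees with the paper's (using $\beta^{-2}=\alpha^2=\eps\,G_\delta(\Gamma,q^{-1})$), so both approaches land on the same expressions.
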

Recall that $\eps\in\{\pm1\}$, that $\beta$ is a square root of $\beta^2=1/\alpha^2=\eps/G_{\bar{\delta}}(\Gamma,q^{-1})$ and that the choice $\beta=\eps/\alpha$ results in positive quantum dimensions. (Note that $\alpha$ itself does not explicitly appear in the modular data.)
\begin{proof}
First, we use \autoref{lem_ribbon} to compute the ribbon element of the equivariantisation. For simple objects graded only over the neutral component, the ribbon is simply the $\Z_2$-twist of one of the simple constituents. For simple objects graded only over the nontrivial element $g$, the ribbon is the $\Z_2$-twist of one of the simple constituents composed with $\eqi_g$. Using the $\Z_2$-ribbons in \eqref{eq_Z2-ribbon}, we obtain
\begin{equation*}
\theta_{X_{a,s}} = Q(a),\quad
\theta_{Y_{\{b,-b\}}} = Q(b),\quad
\theta_{Z_{\bar{x}, s}} = s\beta.
\end{equation*}
The expression for the $\TMatrix$-matrix follows.

Then we compute the $\SMatrix$-matrix using the fusion rules in \autoref{prop_equi_fusion} and the following formula, which holds in any modular tensor category $\cB$:
\begin{equation*}
\SMatrix_{X,Y} = \theta_X^{-1} \theta_Y^{-1} \sum_{Z\in\Irr(\cB)} N_{X,Y}^Z \theta_Z \dim(Z)
\end{equation*}
for all $X,Y\in\Irr(\cB)$, with the fusion rules written as $\smash{X\otimes Y=\bigoplus_{Z\in\Irr(\cB)}N_{X,Y}^ZZ}$ with fusion coefficients $N_{X,Y}^Z\in\N$.
\begin{enumerate}[wide]
\item Since $X_{a,s} \otimes X_{b,t}=X_{a+b,stB_Q(a,b)}$, it follows that
\begin{equation*}
\SMatrix_{X_{a,s},X_{b,t}}=Q(a)^{-1}Q(b)^{-1}Q(a+b)=B_Q(a,b).
\end{equation*}

\item Because $X_{a,s} \otimes Y_{\{b,-b\}} = Y_{\{a+b,a-b\}}$, we obtain
\begin{equation*}
\SMatrix_{X_{a,s},Y_{\{b,-b\}} }=2Q(a)^{-1}Q(b)^{-1}Q(a+b)=2B_Q(a,b).
\end{equation*}

\item Since $X_{a,s} \otimes Z_{\bar{x}, r} =Z_{\bar{a} + \bar{x}, sr B_Q(\bar{a}+\bar{x},a)}$,
\begin{equation*}
\SMatrix_{X_{a,s},Z_{\bar{x}, r}}=|2\Gamma|^{1/2}Q(a)^{-1}r\beta^{-1}rs\beta B_Q(\bar{a}+\bar{x},a)=s|2\Gamma|^{1/2}B_Q(\bar{x},a)Q(a).
\end{equation*}

\item The fusion rule $Y_{\{a,-a\}} \otimes Z_{\bar{x}, r} = Z_{\bar{x}+\bar{a}, 1} \oplus Z_{\bar{x}+\bar{a}, -1}$ implies
\begin{equation*}
\SMatrix_{Y_{\{a,-a\}},Z_{\bar{x}, r}}=Q(a)^{-1}r\beta^{-1}\bigl(|2\Gamma|^{1/2}\beta-|2\Gamma|^{1/2}\beta\bigr)=0.
\end{equation*}

\item From the fusion rule of $Y_{\{b,-b\}} \otimes Y_{\{c,-c\}}$ we obtain
\begin{align*}
\SMatrix_{Y_{\{b,-b\}},Y_{\{c,-c\}}} &= Q(b)^{-1}Q(c)^{-1} \bigl( 2 Q(b+c) + 2 Q(b-c) \bigr) \\
&= 2 \bigl( B_Q(b,c) + B_Q(b,-c) \bigr) \\
&= 2 \bigl( B_Q(b,c) + B_Q(b,c)^{-1} \bigr).
\end{align*}

\item Finally, the fusion rule of $Z_{\bar{x}, r} \otimes Z_{\bar{y}, s}$ implies that
\begin{align*}
\SMatrix_{Z_{\bar{x}, r},Z_{\bar{y}, s}} &= rs\beta^{-2}\!\!\sum_{t \in \bar{\delta}+\bar{x}+\bar{y}}\!\!Q(t)\\
&= rs\alpha^2|2\Gamma|^{1/2}G_{\bar{\delta}+\bar{x}+\bar{y}}(\Gamma,Q) \\
&= \eps \, rs |2\Gamma|^{1/2}G_{\bar{\delta}}(\Gamma,q^{-1})G_{\bar{\delta}+\bar{x}+\bar{y}}(\Gamma,Q).\qedhere
\end{align*}
\end{enumerate}
\end{proof}
We comment on the product $|2\Gamma|^{1/2}G_{\bar{\delta}}(\Gamma,q^{-1})G_{\bar{\delta}+\bar{x}+\bar{y}}(\Gamma,Q)$ of the two partial Gauss sums appearing in the $\SMatrix$-matrix entry $\smash{\SMatrix_{Z_{\bar{x},r},Z_{\bar{y},s}}}$. The first Gauss sum is well-defined only for discriminant forms not containing $2_t^{\pm1}$ and evaluated in \autoref{conj_partialGauss1}. The second Gauss sum is always well-defined but takes exceptional values for $\smash{2_t^{\pm1}}$, $\smash{4_t^{\pm1}}$ and $\smash{2_\II^{\pm1}}$. For ease of presentation, let us assume that the discriminant form $(\Gamma,Q)$ does not contain these Jordan components. Then $\bar{\delta}=0\in\Gamma/2\Gamma$ and
\begin{equation*}
G_{\bar{\delta}+\bar{x}+\bar{y}}(\Gamma,Q)=|\Gamma/2\Gamma|^{1/2}\e(\sign(\Gamma,Q)/8)\delta_{\bar{x}+\bar{y},0}.
\end{equation*}
Hence, the product of the Gauss sums evaluates to
\begin{equation}\label{eq_SZZ}
|2\Gamma|^{1/2}G_{\bar{\delta}}(\Gamma,q^{-1})G_{\bar{\delta}+\bar{x}+\bar{y}}(\Gamma,Q)=|\Gamma|^{1/2}s(\Gamma_{\mathrm{ev}})\left(\frac{2}{|\Gamma_\mathrm{odd}|}\right)\delta_{\bar{x}+\bar{y},0}
\end{equation}
if we exclude the possibility of these exceptional cases. Below, we list also the (exceptional) $\SMatrix$-matrix entry for the discriminant forms $4_t^{\pm1}$.

\medskip

If $\cC$ and $\cD$ are braided $\Z_2$-crossed tensor categories, then the tensor category $\smash{\cC\boxtimes^{\Z_2}\cD} = (\cC_1\boxtimes\cD_1) \oplus (\cC_g\boxtimes\cD_g) \subset \cC\boxtimes\cD$ is a braided $\Z_2$-crossed tensor category with the diagonal $\Z_2$-action and the obvious $\Z_2$-braiding. In particular,
\begin{align*}
&\LM(\Gamma_1,\sigma_1,\omega_1,\bar{\delta}_1,\eps_1 \,|\, q_1,\alpha_1,\beta_1)
\boxtimes^{\Z_2}
\LM(\Gamma_2,\sigma_2,\omega_2,\bar{\delta}_2,\eps_2 \,|\, q_2,\alpha_2,\beta_2) \\
&\quad \cong \LM(\Gamma_1\oplus \Gamma_2,\sigma_1\oplus \sigma_2,\omega_1\oplus \omega_2,\bar{\delta}_1\oplus \bar{\delta}_2,\eps_1\eps_2 \mid q_1\oplus q_2,\alpha_1\alpha_2,\beta_1\beta_2).
\end{align*}
Hence, it suffices to present concrete examples associated with \emph{indecomposable} discriminant forms $(\Gamma,Q)$.


\subsubsection*{Odd Groups}

Let $(\Gamma, Q)$ be a discriminant form (indecomposable or not) where $\Gamma$ is a group of odd order. Then, we already noted that $\LM(\Gamma,\sigma,\omega,\bar{\delta},\eps\,|\,q,\alpha,\beta)$ reduces to the (odd) Tambara-Yamagami category $\TY(\Gamma,\sigma,\eps\,|\,q,\alpha,\beta)$, as considered in \autoref{sec_TY}. In that case, the equivariantisation was already computed in Section~5 of \cite{GNN09}.

Recall that in the odd case, $q$ is the unique quadratic square root of $Q$, $\sigma$ is the unique bimultiplicative square root of $B_Q$ and $\bar{\delta}=0\in\Gamma/2\Gamma$. Moreover, $|\Gamma_2| = 1$ and $2\Gamma = \Gamma$.

It is not difficult to reduce our formulae for the fusion rules and the modular data to the special case of odd $|\Gamma|$, and we find perfect agreement with \cite{GNN09}. Indeed, the simple objects are then classified as follows:
\begin{enumerate}
\item Two invertible objects $X_{s}$ (of quantum dimension~$1$) for $s\in \{\pm 1\}$.
\item $(|\Gamma| - 1)/2$ simple objects $Y_{\{a, -a\}}$ of quantum dimension~$2$ for $a\in\Gamma\setminus\{0\}$.
\item Two simple objects $Z_{s}$ of quantum dimension $|\Gamma|^{1/2}$ for $s\in \{\pm 1\}$.
\end{enumerate}
We forego an explicit description of the fusion rules; they are readily read off from \autoref{prop_equi_fusion}.

In the odd case, the modular data are given by the $\TMatrix$-matrix
\begin{equation*}
\TMatrix_{X_{s},X_{s}} = 1,\quad
\TMatrix_{Y_{\{a,-a\}},Y_{\{a,-a\}}} = Q(a)^{-1},\quad
\TMatrix_{Z_{s},Z_{s}} = s\beta^{-1}
\end{equation*}
and the $\SMatrix$-matrix
{\allowdisplaybreaks
\begin{align*}
\SMatrix_{X_{s},X_{t}} &= 1, &\!\!\!\!\!\!\!\!\!\!\!\!\!\!\!\!\!\!\!\!\!\!\!\!\!\!\!\!\!\!\!\!
\SMatrix_{X_{s},Y_{\{b,-b\}}} &= 2, \\
\SMatrix_{X_{s},Z_{r}} &= s|\Gamma|^{1/2}, &\!\!\!\!\!\!\!\!\!\!\!\!\!\!\!\!\!\!\!\!\!\!\!\!\!\!\!\!\!\!\!\!
\SMatrix_{Y_{\{a,-a\}}, Z_{r}} &= 0, \\
\SMatrix_{Y_{\{b,-b\}}, Y_{\{c,-c\}}} &= 2\bigl(B_Q(b,c) + B_Q(b,c)^{-1}\bigr),\\[-2mm]
\SMatrix_{Z_{s}, Z_{r}} &= \eps \, rs |\Gamma|^{1/2}G(\Gamma,q^{-1})G(\Gamma,Q)= \eps \, rs |\Gamma|^{1/2}\left(\frac{2}{|\Gamma|}\right).
\end{align*}
}%
In the last formula, we used that the product of the two Gauss sums simplifies to a Kronecker symbol; see equation~\eqref{eq_SZZ} or \autoref{rem_Gauss}.

Here, $\eps\in\{\pm1\}$ and $\beta$ is a square root of $\beta^2=1/\alpha^2=\eps\,\bigl(\smash{\frac{2}{|\Gamma|}}\bigr)\e(\sign(\Gamma,Q)/8)$. The choice $\beta=\eps/\alpha$ results in positive quantum dimensions.


\subsubsection*{Even Indecomposable Components}

As another example, for $k\geq2$ we consider the indecomposable discriminant form with Jordan symbol $(2^k)_t^{\pm1}$, i.e.\ the group $\Gamma = \Z_{2^k}$ with the quadratic form $\smash{Q(x)=\e\bigl(\frac{tx^2}{2^{k+1}}\bigr)}$, where $0 \leq x < 2^k$ and $t \in \mathbb{Z}_8^\times$ (see \autoref{tab_disc}). The sign in the Jordan symbol is $s(\Gamma,Q)=\bigl(\frac{t}{2}\bigl)=(-1)^{(t^2-1)/8}=\pm1$, and we write $\smash{(2^k)_t\coloneqq(2^k)_t^{\pm1}}$ for short. The signature is $\smash{\e(\sign(\Gamma,Q)/8)=\e(t/8)\bigl(\frac{t}{2}\bigl)^k}$.

Recall that the choice of $\sigma$ from \autoref{ex_abelina symmetric cyclic}, the choice $q(x) = \e\bigl(\frac{tx^2}{2^{k+2}}\bigr)$ from \autoref{ex_square} and $\bar{\delta}=0\in\Gamma/2\Gamma$ satisfy the conditions in \autoref{prop_ass_q}. To be precise, there we only considered the case $t=1$, while in general we simply raise $\sigma$ and $q$ to the power of $t$.

Moreover, $\Gamma_2 = \langle 2^{k-1} \rangle \cong \Z_2$ (which we write as $\{0,1\}$ and not as $\{0,2^{k-1}\}$ in the following) and $2\Gamma = \langle 2 \rangle \cong \Z_{2^{k-1}}$ so that $\Gamma/2\Gamma\cong\Z_2$ (also represented by $\{0,1\}$). The simple objects are then classified as follows:
\begin{enumerate}
\item Four invertible objects $X_{a,s}$ (of quantum dimension~$1$) for $a \in \{0,1\}$ and $s \in \{\pm 1\}$.
\item $2^{k-1} - 1$ simple objects of quantum dimension~$2$, denoted by $Y_n\coloneqq Y_{\{n,-n\}}$, for $1 \leq n \leq 2^{k-1} - 1$.
\item Four simple objects $Z_{\bar{x},s}$ of quantum dimension $2^{(k-1)/2}$ for $\bar{x} \in \{0,1\}$ and $s \in \{1,-1\}$.
\end{enumerate}

By \autoref{conj_partialGauss1}, the Gauss sum $G_0(\Gamma,q^{-1})$ takes the values
\begin{equation*}
G_0(\Gamma,q^{-1})=\e(-\sign(\Gamma,Q)/8)\Bigl(\frac{t}{2}\Bigr)=\e(-t/8)\Bigl(\frac{t}{2}\Bigr)^k\Bigl(\frac{t}{2}\Bigr).
\end{equation*}
On the other hand, it is not difficult to see that
\begin{equation*}
G_{\bar{x}}(\Gamma,Q)=\sqrt{2}\e(\sign(\Gamma,Q)/8)\cdot
\begin{cases}
\delta_{\bar{x},1}&\text{if }k=2,\\
\delta_{\bar{x},0}&\text{if }k\geq3.
\end{cases}
\end{equation*}
Hence, we obtain for the product of these Gauss sums
\begin{equation*}
G_0(\Gamma,q^{-1})G_{\bar{x}}(\Gamma,Q)=\sqrt{2}\,\Bigl(\frac{t}{2}\Bigl)\cdot
\begin{cases}
\delta_{\bar{x},1}&\text{if }k=2,\\
\delta_{\bar{x},0}&\text{if }k\geq3,
\end{cases}
\end{equation*}
which agrees with \eqref{eq_SZZ} but in addition treats the cases $4_t^{\pm1}$. Then, the modular data for $(2^k)_t$ with $k\geq2$ and $t \in \mathbb{Z}_8^\times$ are given by the $\TMatrix$-matrix with entries
\begin{align*}
\TMatrix_{X_{a,s},X_{a,s}} &= \e(-ta^22^{k-3})=
\begin{cases}
(-1)^a&\text{if }k=2,\\
1&\text{if }k\geq3,\\
\end{cases}\\
\TMatrix_{Y_n,Y_n} &= \e(-tn^2/2^{k+1}),\quad
\TMatrix_{Z_{\bar{x}, s},Z_{\bar{x}, s}} = s\beta^{-1}
\end{align*}
and the $\SMatrix$-matrix with entries
\begin{align*}
\SMatrix_{X_{a,s},X_{b,t}} &=1, &
\SMatrix_{X_{a,s},Y_n} &= 2(-1)^{an}, \\
\SMatrix_{X_{a,s},Z_{\bar{x}, r}} &= s2^{(k-1)/2}
\begin{cases}
(-1)^{a(\bar{x}+1)}&\text{if }k=2,\\
(-1)^{a\bar{x}}&\text{if }k\geq3,\\
\end{cases} &
\SMatrix_{Y_n, Z_{\bar{x}, r}} &= 0, \\
\SMatrix_{Y_m, Y_n} &= 4\cos(tmn\pi/2^{k-1}),\\
\SMatrix_{Z_{\bar{x}, r}, Z_{\bar{y}, s}} &= \eps \, rs 2^{k/2}\Bigl(\frac{t}{2}\Bigr)\cdot
\begin{cases}
\delta_{\bar{x}+\bar{y},1}&\text{if }k=2,\\
\delta_{\bar{x}+\bar{y},0}&\text{if }k\geq3.
\end{cases}
\end{align*}
Here, $\eps\in\{\pm1\}$ and $\beta$ is a square root of $\beta^2=1/\alpha^2=\eps/G_0(\Gamma,q^{-1})$. The choice $\beta=\eps/\alpha$ results in positive quantum dimensions.

\begin{ex}
We take the discriminant form $\smash{\Gamma=(4)_1=(4)_1^{+1}}$ (for $k=2$) with the choice $\eps=1$, some choice of $\alpha$ and $\beta=\eps/\alpha=1/\alpha$. The Gauss sum is $G_0(\Gamma,q^{-1})=\e(7/8)$. For definiteness, take $\alpha=\e(7/16)$, and so $\beta=\eps/\alpha=\e(9/16)$, recalling that $\alpha^2=\eps\,G_0(\Gamma,q^{-1})$. Then the modular data are given by
{\setlength{\arraycolsep}{3pt}
\begin{equation*}
\TMatrix = \operatorname{diag}
\left(\begin{array}{cccc|cccc|c}
X_{0,1} & X_{0,-1} & X_{1,-1} & X_{1,1} & Z_{0,-1} & Z_{1,-1} & Z_{0,1} & Z_{1,1} & Y_1 \\\hline
\vphantom{f^{f^f}}1 & 1 & -1 & -1 &
\e\left(\frac{15}{16}\right) &
\e\left(\frac{15}{16}\right) &
\e\left(\frac{7}{16}\right) &
\e\left(\frac{7}{16}\right) &
\e\left(\frac{7}{8}\right)
\end{array}\right)
\end{equation*}
and
\begin{equation*}
\SMatrix =
\left(\begin{array}{c|cccc|cccc|c}
& X_{0,1} & X_{0,-1} & X_{1,-1} & X_{1,1} & Z_{0,-1} & Z_{1,-1} & Z_{0,1} & Z_{1,1} & Y_1 \\\hline
\vphantom{f^{f^f}}X_{0,1} & 1 & 1 & 1 & 1 & \sqrt{2} & \sqrt{2} & \sqrt{2} & \sqrt{2} & 2 \\
X_{0,-1} & & 1 & 1 & 1 & -\sqrt{2} & -\sqrt{2} & -\sqrt{2} & -\sqrt{2} & 2 \\
X_{1,-1} & & & 1 & 1 & \sqrt{2} & -\sqrt{2} & \sqrt{2} & -\sqrt{2} & -2 \\
X_{1,1} & & & & 1 & -\sqrt{2} & \sqrt{2} & -\sqrt{2} & \sqrt{2} & -2 \\\hline
Z_{0,-1} & & & & & 0 & 2 & 0 & -2 & 0 \\
Z_{1,-1} & & & & & & 0 & -2 & 0 & 0 \\
Z_{0,1} & & & & & & & 0 & 2 & 0 \\
Z_{1,1} & & & & & & & & 0 & 0 \\\hline
Y_1 & & & & & & & & & 0 \\
\end{array}\right).
\end{equation*}
}%
\end{ex}


\subsection{Data for Lattices}\label{sec_latticedata}

We now discuss how the data used to define the braided $\Z_2$-crossed tensor category in \autoref{sec_evenTY} can be obtained from lattice data. If the even lattice is positive-definite, the braided $\Z_2$-crossed tensor category will in this way appear as category of modules of a $\Z_2$-orbifold of the corresponding lattice \voa{} (see \autoref*{GLM1thm_latmain} in \cite{GLM24a}).

\medskip

Any discriminant form $(\Gamma,Q)$ can be realised as dual quotient $\Gamma=L^*\!/L$ of some even lattice $L=(L,\langle\cdot,\cdot\rangle)$ embedded into its ambient quadratic space $(\R^d,\DiscQ)$ with $\R^d=L\otimes_\Z\R$ and $\DiscQ(v)=\e(\langle v,v\rangle/2)$ for $v\in\R^d$ \cite{Nik80}. We then recall from \autoref*{GLM1sec_latticeDiscriminantForm} of \cite{GLM24a} that one can realise $\smash{\Vect_\Gamma^Q=\Vect_\Gamma^{(\sigma,\omega)}}$ concretely as the condensation (i.e.\ going to the local modules)
\begin{equation*}
\Vect_{\R^d}^\DiscQ=\Vect_{\R^d}^{(\DiscS,\DiscO)}\leadsto\Vect_\Gamma^Q=\Vect_\Gamma^{(\sigma,\omega)}
\end{equation*}
of the infinite pointed braided fusion category $\smash{\Vect_{\R^d}^\DiscQ}$ by an infinite commutative, associative algebra $A=\C_\epsilon[L]$ associated with the even lattice $L\subseteq\R^d$.

In particular, in \cite{GLM24a} we describe how the abelian $3$-cocycle $(\DiscS,\DiscO)$ on the ambient quadratic space $\smash{(\R^d,\DiscQ)}$, chosen as, say,
\begin{equation*}
\DiscS(u,v)=\e(\langle u,v\rangle/2),\quad\DiscO(u,v,w)=1
\end{equation*}
for $u,v,w\in\R^d$, induces the abelian $3$-cocycle $(\sigma,\omega)$ on $(\Gamma,Q)$. The latter further depends on a choice of $2$-cocycle $\epsilon\colon L\times L\to\C^\times$ whose skew form equals $\epsilon(u,v)\epsilon(v,u)^{-1}=\DiscS(u,v)$ for all $u,v\in L$, and further a choice of representatives $\hat{a}\in L^*$ for the $L$-cosets $a\in\Gamma$ and the corresponding $2$-cocycle $u\colon\Gamma\times\Gamma\to L$.

\medskip

In this section, we assume for simplicity the following \emph{strong evenness} condition for the lattice $L$:
\begin{ass}\label{ass_strongeven}
The lattice $L$ fulfils one of the following equivalent conditions:
\begin{enumerate}
\item\label{eq_conditionEpsilonOne} $\langle u,v\rangle\in 2\Z$ for all $u,v\in L$,
\item $L=\sqrt{2}K$ where $K$ is an integral lattice,
\item $L\subset 2L^*$,
\item $\DiscS(u,v)=\e(\langle u,v\rangle/2)=1$ for all $u,v\in L$,
\item The $2$-cocycle $\epsilon$ on $L$ whose skew form is $\DiscS|_L$ can be chosen to be trivial,
\item\label{eq_conditionEpsilonOne_6} $\Gamma/2\Gamma\cong\Z_2^{\rk(L)}$.
\end{enumerate}
\end{ass}
\begin{proof}[Proof of Equivalence]
All equivalences except the one to item~\eqref{eq_conditionEpsilonOne_6} are clear. For an integral lattice of rank $d=\rk(L)$, there is a basis $\{\alpha_1,\dots,\alpha_d\}$ of $L^*$ and integers $m_1,\dots,m_d\in\Z$ such that $\{m_1\alpha_1,\dots,m_r\alpha_d\}$ is a basis of $L$. Then, as abelian group $\Gamma=L^*\!/L\cong\Z_{m_1}\times\dots\times\Z_{m_d}$. This implies that $\Gamma/2\Gamma\cong\Z_2^l$ for some $l\leq d$ and moreover that $\Gamma/2\Gamma\cong\Z_2^d$ if and only if all $m_i$ are even if and only if $L\subset 2L^*$.
\end{proof}

The assumption is made precisely so that $\DiscS(u,v)=\e(\langle u,v\rangle/2)=1$ for $u,v\in L$, instead of taking values in $\{\pm 1\}$, and hence that the $2$-cocycle $\epsilon$ on $L$ becomes trivial. The particular representing abelian $3$-cocycle $(\sigma,\omega)$ for $\smash{\Vect_\Gamma^Q}$ from \cite{GLM24a} then simplifies to
\begin{align*}
\sigma(a,b)&=\DiscS(\hat{a},\hat{b})=\e(\langle\hat{a},\hat{b}\rangle/2),\\
\omega(a,b,c)&=\DiscS(\hat{a},u(b,c))=\e(\langle\hat{a},u(b,c)\rangle/2)
\end{align*}
for $a,b,c\in\Gamma$ and hence obeys the additional conditions in \autoref{sec_evenTY}:
\begin{prop}\label{cor_special_lat_disc}
For a lattice $L$ fulfilling the strong evenness condition in \autoref{ass_strongeven}, the representing abelian $3$-cocycle $(\sigma,\omega)$ for the discriminant form $\Gamma=L^*\!/L$ from \autoref*{GLM1sec_latticeDiscriminantForm} in \cite{GLM24a} satisfies the properties in \autoref{ass_strongdisc}.
\end{prop}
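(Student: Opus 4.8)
The plan is to unwind the explicit formulae for $(\sigma,\omega)$ supplied by \cite{GLM24a} and verify the four properties of \autoref{ass_strongdisc} one at a time, keeping careful track of where the stronger evenness hypothesis of \autoref{ass_strongeven} is actually used. First I would fix the combinatorial data: a system of representatives $\hat a\in L^*$ for the cosets $a\in\Gamma=L^*\!/L$ with $\hat 0=0$, together with the $L$-valued $2$-cocycle $u$ determined by $\hat a+\hat b=\widehat{a+b}+u(a,b)$, so that $u(b,c)\in L$ whereas $\hat a\in L^*$. Under \autoref{ass_strongeven} the $2$-cocycle $\epsilon$ on $L$ becomes trivial, and the formulae recorded just before the proposition reduce to $\sigma(a,b)=\e(\langle\hat a,\hat b\rangle/2)$ and $\omega(a,b,c)=\e(\langle\hat a,u(b,c)\rangle/2)$, which is all I would need to input.

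With this in hand, normalisation ($\sigma(a,0)=\sigma(0,a)=1$) is immediate from $\hat 0=0$, and symmetry of $\sigma$ is immediate from the symmetry of $\langle\cdot,\cdot\rangle$. For the compatibility $\sigma(a,b+c)\sigma(a,b)^{-1}\sigma(a,c)^{-1}=\omega(a,b,c)$, I would first note that $\langle\hat a,u(b,c)\rangle\in\Z$ because $\hat a\in L^*$ and $u(b,c)\in L$, so that $\omega$ is valued in $\{\pm1\}$ and hence equals its own inverse. A one-line computation then gives $\sigma(a,b+c)\sigma(a,b)^{-1}\sigma(a,c)^{-1}=\e(\langle\hat a,\widehat{b+c}-\hat b-\hat c\rangle/2)=\omega(a,b,c)^{\pm1}=\omega(a,b,c)$, the ambiguous sign being harmless precisely because $\omega\in\{\pm1\}$. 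I would stress that this step, together with the $\{\pm1\}$-valuedness of $\omega$, already holds for \emph{any} even lattice and does not yet use the stronger hypothesis.

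The one step where the stronger evenness genuinely enters, and which I expect to be the only real content of the argument, is the homomorphism property of $\omega(\cdot,b,c)$. Substituting $\widehat{a+a'}=\hat a+\hat{a'}-u(a,a')$ and using bilinearity yields
\[
\omega(a+a',b,c)=\omega(a,b,c)\,\omega(a',b,c)\,\e\bigl(-\langle u(a,a'),u(b,c)\rangle/2\bigr),
\]
with both $u(a,a')$ and $u(b,c)$ lying in $L$. For a general even lattice $\langle u(a,a'),u(b,c)\rangle$ is merely an integer, so the correction factor can equal $-1$ and the homomorphism property may fail; it is exactly here that I would invoke condition~\eqref{eq_conditionEpsilonOne} of \autoref{ass_strongeven}, namely $\langle\cdot,\cdot\rangle\in2\Z$ on $L$, to force the factor to be $1$ and thereby conclude that $\omega(\cdot,b,c)\colon\Gamma\to\C^\times$ is a homomorphism. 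This completes the verification of all four properties; the supplementary consequences noted after \autoref{ass_strongdisc} --- namely $\sigma^2=B_Q$, the descent of $\omega(\cdot,b,c)$ to $\Gamma/2\Gamma$, and the symmetry $\omega(a,b,c)=\omega(a,c,b)$ (immediate from $u(b,c)=u(c,b)$) --- then follow formally from these formulae and the symmetry of $\langle\cdot,\cdot\rangle$.
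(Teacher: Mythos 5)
Your proof is correct and follows the same route the paper intends: the paper states the simplified formulae $\sigma(a,b)=\e(\langle\hat a,\hat b\rangle/2)$ and $\omega(a,b,c)=\e(\langle\hat a,u(b,c)\rangle/2)$ and leaves the verification of \autoref{ass_strongdisc} implicit, which is exactly the direct check you carry out, correctly isolating $\langle L,L\rangle\subset2\Z$ as the one place where \autoref{ass_strongeven} is needed (to kill the correction term $\e(-\langle u(a,a'),u(b,c)\rangle/2)$, equivalently to make $\DiscS(\cdot,u(b,c))$ descend to a character of $\Gamma$). No gaps.
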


\begin{rem}\label{rem_omega_factor}
In particular, this means that $\sigma$ and $\omega(a,\cdot,\cdot)$ for fixed $a\in\Gamma$ are symmetric and that
$\omega(\cdot,b,c)$ factors through a group homomorphism $\Gamma/2\Gamma\to \C^\times$ for fixed $b,c\in\Gamma$.
\end{rem}

By the definition of the dual lattice, $\DiscS$ restricts to a bimultiplicative pairing $L^*\times L\to\{\pm1\}$. Now, if $L$ satisfies \autoref{ass_strongeven}, $\DiscS(\hat a,u)=\e(\langle\hat a,u\rangle/2)$ for $u\in L$ does not depend on the choice of coset representative $\hat{a}$ and thus factors through a pairing $\Gamma\times L\to\{\pm1\}$. Because it only takes values in $\{\pm1\}$, this further factors through a pairing
\begin{equation*}
p\colon\Gamma/2\Gamma\times L/2L\to\{\pm1\},
\end{equation*}
which we can show to be a perfect pairing. This pairing $p$ allows us to identify $\Gamma/2\Gamma\cong\Hom(L/2L,\{\pm1\})$.

This also recovers the interpretation \eqref{eq_conditionEpsilonOne_6} of the stronger evenness condition in \autoref{ass_strongeven}: the $2$-torsion of $\Gamma$ is as large as possible, namely $\smash{\Gamma/2\Gamma\cong\Z_2^{\rk(L)}}$, noting that for a general even lattice $\Gamma/2\Gamma\cong\Z_2^l$ with $l\leq\rk(L)$.

\medskip

We now fix $\bar{\delta}\in\Gamma/2\Gamma$ and a function $q\colon\Gamma\to\C^\times$ with the properties in \autoref{prop_ass_q} stated in \autoref{sec_evenTY}.

To this end, for an even lattice $L$, we consider the function $v\mapsto\e(\langle v,v\rangle/4)$ taking values in $\{\pm1\}$, which is a certain choice of square root of the natural (and trivial) quadratic form $\e(\langle v,v\rangle/2)=1$ on $L$. This descends to a function
\begin{equation*}
\delta\colon L/2L\to\{\pm 1\},\quad \delta(v+2L)\coloneqq\e(\langle v,v\rangle/4).
\end{equation*}
Under the additional evenness condition in \autoref{ass_strongeven}, i.e.\ if $\langle u,v\rangle\in 2\Z$ for $u,v\in L$, this function is a group homomorphism $\delta\in\Hom(L/2L,\{\pm1\})$. Via the pairing $p$, we identify this $\delta$ with a class $\bar{\delta}\in\Gamma/2\Gamma$ (cf.\ \autoref{prop_ass_q}).

On the other hand, we can use the choice of representatives for $\Gamma=L^*\!/L$ to define a function
\begin{equation*}
q\colon\Gamma\to\C^\times,\quad q(a)\coloneqq \e(\langle\hat{a},\hat{a}\rangle/4)
\end{equation*}
satisfying $q(a)^2=\sigma(a,a)$ for $a\in\Gamma$. Under the additional evenness condition in \autoref{ass_strongeven} it satisfies
\begin{equation}\label{formula_halfQadditivity}
q({a+b})q({a})^{-1}q({b})^{-1}=\sigma({a},{b})\omega(a+b,a,b)^{-1}\delta(u(a,b))
\end{equation}
for all $a,b\in\Gamma$. Identifying $\delta$ with its corresponding element in $\Gamma/2\Gamma$ via the pairing~$p$, we may replace $\delta(u(a,b))$ by $\smash{\DiscS(\bar{\delta},u(a,b))=\baromega(\bar{\delta},a,b)}$ so that we recover the conditions in \autoref{prop_ass_q}. Thus, we proved:
\begin{prop}\label{defi_latqqbar}
Given a lattice $L$ fulfilling the stronger evenness condition in \autoref{ass_strongeven} and given the representing abelian $3$-cocycle for the discriminant form $\Gamma=L^*\!/L$ from \autoref*{GLM1sec_latticeDiscriminantForm} in \cite{GLM24a}, the above choices of $q$ and $\bar{\delta}$ satisfy the conditions in \autoref{prop_ass_q}.
\end{prop}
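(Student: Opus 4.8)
The plan is to verify the two identities of \autoref{prop_ass_q} directly from the explicit lattice expressions $\sigma(a,b)=\e(\langle\hat a,\hat b\rangle/2)$, $\omega(a,b,c)=\e(\langle\hat a,u(b,c)\rangle/2)$, $q(a)=\e(\langle\hat a,\hat a\rangle/4)$ and $\delta(v+2L)=\e(\langle v,v\rangle/4)$, where $u\colon\Gamma\times\Gamma\to L$ is the $2$-cocycle recording the failure of the chosen representatives to add, normalised so that $\hat a+\hat b=\widehat{a+b}+u(a,b)$. The first identity is immediate and needs no evenness hypothesis: $q(a)^2=\e(\langle\hat a,\hat a\rangle/2)=\sigma(a,a)=Q(a)$.

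For the second identity I would expand the squared norm of the representative $\widehat{a+b}=\hat a+\hat b-u(a,b)$. Writing $u=u(a,b)$ for brevity,
\[
\langle\widehat{a+b},\widehat{a+b}\rangle-\langle\hat a,\hat a\rangle-\langle\hat b,\hat b\rangle=2\langle\hat a,\hat b\rangle-2\langle\hat a+\hat b,u\rangle+\langle u,u\rangle,
\]
so that, dividing by $4$ and applying $\e(\cdot)$,
\[
q(a+b)q(a)^{-1}q(b)^{-1}=\e\!\left(\tfrac{\langle\hat a,\hat b\rangle}{2}\right)\e\!\left(-\tfrac{\langle\hat a+\hat b,u\rangle}{2}\right)\e\!\left(\tfrac{\langle u,u\rangle}{4}\right).
\]
The first and third factors are exactly $\sigma(a,b)$ and $\delta(u)$. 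For the middle factor I would substitute $\langle\hat a+\hat b,u\rangle=\langle\widehat{a+b},u\rangle+\langle u,u\rangle$, turning it into $\omega(a+b,a,b)^{-1}\,\e(-\langle u,u\rangle/2)$. The key point is that $u\in L$ and $L$ is even, so $\langle u,u\rangle\in2\Z$ and the spurious factor $\e(-\langle u,u\rangle/2)$ equals $1$; this is precisely where the lattice structure does the work. Collecting the three surviving factors yields \eqref{formula_halfQadditivity}.

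It then remains to convert \eqref{formula_halfQadditivity} into the exact form of \autoref{prop_ass_q}. Since $L$ satisfies \autoref{ass_strongeven}, the pairing $p\colon\Gamma/2\Gamma\times L/2L\to\{\pm1\}$ is perfect and identifies the homomorphism $\delta\in\Hom(L/2L,\{\pm1\})$ with a class $\delta\in\Gamma/2\Gamma$; by construction $\delta(u)=p(\delta,u)=\e(\langle\hat\delta,u\rangle/2)=\omega(\delta,a,b)$. Finally, because $\omega$ takes values in $\{\pm1\}$ (\autoref{ass_strongdisc}), one has $\omega(a+b,a,b)^{-1}=\omega(a+b,a,b)$, so \eqref{formula_halfQadditivity} becomes $q(a+b)q(a)^{-1}q(b)^{-1}=\sigma(a,b)\,\omega(a+b,a,b)\,\omega(\delta,a,b)$, as required.

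The step that needs the most care—and the one I regard as the main obstacle—is showing that the lattice-defined class $\delta$ genuinely lies in $\Gamma_2$, as demanded by \autoref{prop_ass_q}, and not merely in $\Gamma/2\Gamma$. Two things must be checked. First, that $\delta$ is a group homomorphism $L/2L\to\{\pm1\}$; this uses the full strength of \autoref{ass_strongeven}, since the cross term $\e(\langle v,w\rangle/2)$ in $\delta(v+w)$ vanishes precisely because $\langle v,w\rangle\in2\Z$ for all $v,w\in L$. Second, that the class of $\delta$ admits a representative of order dividing $2$. Writing $L=\sqrt2K$ with $K$ integral, this amounts to finding $\kappa\in K$ with $\langle\kappa,w\rangle\equiv\langle w,w\rangle\pmod2$ for all $w\in K$, i.e.\ a characteristic element of $K$; such $\kappa$ always exists because the diagonal of a symmetric $\mathbb{F}_2$-form lies in its column space, and then $\hat\delta=\kappa/\sqrt2\in L^*$ satisfies $2\hat\delta\in L$, so its class lies in $\Gamma_2$. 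Alternatively, one may reduce to indecomposable Jordan components and read off $\delta$ from \autoref{ex_square} and \autoref{prop_ass_q}, confirming that $\delta=0$ whenever $K$ is even and $\delta=1\in\Z_2$ exactly on the $2_t^{\pm1}$ components, which arise only from $L=\sqrt2K$ with $K$ odd.
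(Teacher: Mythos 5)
Your proof is correct and follows essentially the same route as the paper: a direct expansion of $\langle\widehat{a+b},\widehat{a+b}\rangle$ using $\hat a+\hat b=\widehat{a+b}+u(a,b)$ and the evenness of $L$ to obtain \eqref{formula_halfQadditivity}, followed by the identification $\delta(u(a,b))=\omega(\delta,a,b)$ via the perfect pairing $p$ and the observation that $\omega(a+b,a,b)^{-1}=\omega(a+b,a,b)$. Your characteristic-vector argument that the class $\delta$ genuinely lifts to $\Gamma_2$ is a welcome piece of extra care on a point the paper only asserts (deferring implicitly to the Jordan-component analysis of \autoref{prop_ass_q}), but it is not a divergence in method.
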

Overall, we have shown how to produce the data used to define the braided $\Z_2$-crossed tensor category in \autoref{sec_evenTY} directly from an even lattice $L$ with $\Gamma=L^*\!/L$.

\medskip

We finally comment on the partial Gauss sum in \autoref{conj_partialGauss1} with the choices made for $\Gamma=L^*\!/L$ in this section:
\begin{prop}\label{conj_partialGauss2}
In the situation of \autoref{defi_latqqbar}, the partial Gauss sum $G_{\bar{\delta}}(\Gamma,q^{-1})$ in \autoref{conj_partialGauss1} evaluates to
\begin{align*}
G_{\bar{\delta}}(\Gamma,q^{-1})=|2\Gamma|^{-1/2}\sum_{{a\in \bar{\delta}}} q(a)^{-1}=\e(-\sign(L)/8)=\e(-\sign(\Gamma)/8).
\end{align*}
\end{prop}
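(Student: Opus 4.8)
The plan is to evaluate $G_\delta(\Gamma,q^{-1})$ by translating it into an ordinary Gauss--Milgram sum over the integral lattice $K$ with $L=\sqrt2 K$ and then invoking Milgram's formula. First I would use \autoref{ass_strongeven} to write $L=\sqrt2 K$, so that $L^*=\tfrac1{\sqrt2}K^*$ and, for a representative $\hat a=z/\sqrt2$ with $z\in K^*$, one has $q(a)^{-1}=\e(-\langle\hat a,\hat a\rangle/4)=\e(-\langle z,z\rangle_K/8)$. The summation set $\{a\in\Gamma\mid\bar a=\delta\}$ must then be identified in lattice terms: unwinding the definition of $\delta$ from \autoref{defi_latqqbar} through the perfect pairing $p$, the condition $\bar a=\delta$ becomes $\langle z,w\rangle_K\equiv\langle w,w\rangle_K\pmod2$ for all $w\in K$, i.e.\ $z$ is a \emph{characteristic vector} of $K$. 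Such $z$ form a single coset of $2K^*$ in $K^*$, and modulo $L=\sqrt2 K$ (i.e.\ modulo $2K$ in the variable $z$) they are counted by $|2K^*/2K|=|K^*/K|$, matching $|2\Gamma|=|K^*/K|$ (using $|\Gamma|=2^{\rk L}|K^*/K|$ together with $\Gamma/2\Gamma\cong\Z_2^{\rk L}$). Hence $G_\delta(\Gamma,q^{-1})=|K^*/K|^{-1/2}\sum_{z}\e(-\langle z,z\rangle_K/8)$, the sum running over characteristic $z$ modulo $2K$.

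Next I would fix one characteristic vector $c_0\in K^*$ (taking $c_0=0$ when $K$ is even) and substitute $z=c_0+2u$ with $u\in K^*$ modulo $K$. Expanding $\langle z,z\rangle_K=\langle c_0,c_0\rangle_K+4\langle c_0,u\rangle_K+4\langle u,u\rangle_K$ factors the sum as
\[
G_\delta(\Gamma,q^{-1})=\e\!\left(-\tfrac{\langle c_0,c_0\rangle_K}{8}\right)|K^*/K|^{-1/2}\sum_{u\in K^*/K}P(u),
\]
where $P(u)=\e(-\tfrac12(\langle u,u\rangle_K+\langle c_0,u\rangle_K))$. The characteristic property of $c_0$ is exactly what makes $P$ well defined on $K^*/K$: for $u'=u+k$ with $k\in K$ the change in the exponent is $\langle u,k\rangle_K+\tfrac12(\langle k,k\rangle_K+\langle c_0,k\rangle_K)\in\Z$, since $\langle c_0,k\rangle_K\equiv\langle k,k\rangle_K\pmod2$. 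Moreover $P$ is a nondegenerate quadratic form on $K^*/K$, refining the bimultiplicative form $\e(-\langle\cdot,\cdot\rangle_K)$.

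Finally I would apply the Gauss--Milgram formula in the version valid for integral (possibly odd) lattices with a characteristic vector, namely
\[
|K^*/K|^{-1/2}\sum_{u\in K^*/K}P(u)=\e\!\left(-\tfrac{\sign(K)-\langle c_0,c_0\rangle_K}{8}\right),
\]
whose unimodular case ($K^*/K=0$) is precisely van der Blij's lemma $\langle c_0,c_0\rangle_K\equiv\sign(K)\pmod8$, and whose even case ($c_0=0$) is the classical Milgram identity used throughout the paper. Substituting cancels the $\langle c_0,c_0\rangle_K$ contributions and yields $G_\delta(\Gamma,q^{-1})=\e(-\sign(K)/8)$; since rescaling by $\sqrt2$ preserves the real signature we have $\sign(K)=\sign(L)$, and Milgram's formula for $\Gamma$ gives $\sign(L)\equiv\sign(\Gamma)\pmod8$, proving the claim. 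I expect the generalized Milgram/van der Blij formula in the middle display to be the main obstacle: for even $K$ it is standard, but in the genuinely odd case (which is exactly where the exceptional Jordan components $2_t^{\pm1}$ and a nonzero $\delta$ occur) the characteristic-vector shift and its interaction with the signature mod $8$ must be handled carefully — either by citing the classical theory of Gauss sums for odd lattices or by reducing to indecomposable components and checking the rank-one lattices $K=\langle t\rangle$ directly.
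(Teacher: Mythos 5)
Your argument is correct, but it takes a genuinely different route from the paper, which in fact offers no formal proof of \autoref{conj_partialGauss2}: it only points to reduction to indecomposable Jordan components (as in the proofs of \autoref{ass_strongdisc}, \autoref{prop_ass_q} and \autoref{conj_partialGauss1}) together with the explicit rank-one verification for $L=\sqrt{n}\Z$ in \autoref{ex_gauss}. Your translation of the summation condition is exactly right: unwinding $p(\bar a,\sqrt2w+2L)=(-1)^{\langle z,w\rangle_K}$ against $\delta(\sqrt2w+2L)=(-1)^{\langle w,w\rangle_K}$ identifies $\{a\in\Gamma\mid\bar a=\delta\}$ with the characteristic vectors of $K$ modulo $2K$ (the characteristic vectors form a single coset of $2K^*$ because the mod-$2$ pairing $K^*/2K^*\times K/2K\to\Z_2$ is perfect and $w\mapsto\langle w,w\rangle_K\bmod 2$ is linear); the count $|2\Gamma|=|K^*/K|$ and the shift $z=c_0+2u$ are likewise correct, and the characteristic property is precisely what makes $P$ well defined on $K^*/K$. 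What your global argument buys is a uniform, conceptual explanation of \emph{why} the lattice-induced $(q,\delta)$ selects the value $\e(-\sign/8)$ even on the ambiguous components $2_t^{\pm1}$ --- the point where the paper's component-wise strategy is delicate, since a Jordan decomposition of $\Gamma$ need not lift to an orthogonal decomposition of $L$ compatible with the chosen coset representatives. What it costs is the reliance on the generalized Milgram--van der Blij formula.

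That formula is not actually a gap: for an integral (possibly odd) lattice $K$ with characteristic vector $c_0\in K^*$, the identity $|K^*/K|^{-1/2}\sum_{u\in K^*/K}\e\bigl(\tfrac12(\langle u,u\rangle_K+\langle c_0,u\rangle_K)\bigr)=\e\bigl((\sign(K)-\langle c_0,c_0\rangle_K)/8\bigr)$ is the classical form of Milgram's theorem proved in Milnor--Husemoller, \emph{Symmetric Bilinear Forms}, Appendix~4, specialising to van der Blij's lemma for unimodular $K$ and to the usual Milgram identity (used throughout the paper) for even $K$; you need its complex conjugate, which then cancels the $\e(-\langle c_0,c_0\rangle_K/8)$ prefactor as you say. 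The concluding steps --- $\sign(K)=\sign(L)$ since rescaling by $\sqrt2$ preserves the real signature, and $\sign(L)\equiv\sign(\Gamma)\pmod 8$ by Milgram for the even lattice $L$ --- are fine. Only your proposed fallback (checking rank-one lattices and invoking multiplicativity) should be treated with care in the presence of $2_t^{\pm1}$ components, for the reason above; citing the classical formula is the cleaner option.
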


Note that here, in contrast to \autoref{conj_partialGauss1}, the value of the partial Gauss sum is also well-defined for the indecomposable $2$-adic Jordan components $2_t^{\pm1}$ because we are making a specific choice of $q$. For instance, if $L=\sqrt{2}K$ where $K=\Z$ is the (odd) standard lattice, then $\Gamma=L^*\!/L\cong 2_1^{+1}\cong 2_5^{-1}$ and the Gauss sum takes the value $G_{\bar{\delta}}(\Gamma,q^{-1})=\e(-\sign(L)/8)=\e(7/8)$, and not $\e(3/8)$, as would have been possible as well. We discuss this example further:
\begin{ex}\label{ex_gauss}
We consider the Gauss sum $G_{\bar{\delta}}(\Gamma,q^{-1})$, $\Gamma=L^*\!/L$, for the even lattice $L=\sqrt{n}\Z\cong\sqrt{n/2}A_1$ where $n\in 2\Ns$. When $n=q$ is a power of $2$, this is already discussed in \autoref{rem_gauss}. This lattice is of the form $L=\sqrt{2}K$ for the integral lattice $K=\sqrt{n/2}\Z$, i.e.\ $L$ satisfies \autoref{ass_strongeven}. The dual lattice of $L$ is $L^*=(1/\sqrt{n})\Z$ so that $\Gamma\cong\Z_n$, and the signature of $L$ is $\sign(L)=1\pmod{8}$.

Hence, with the choices made for $(q,\bar{\delta})$ in this section, \autoref{conj_partialGauss2} asserts that the Gauss sum evaluates to $G_{\bar{\delta}}(\Gamma,q^{-1})=\e(7/8)$. Note that if $4\mid n$, then already $K$ is even and the Gauss sum will not depend on these choices (cf.\ the discussion after \autoref{conj_partialGauss1}).

In the following, we compute the Gauss sum for $\Gamma$ explicitly, and verify \autoref{conj_partialGauss2} for this particular case. We choose the coset representatives $\hat{a}=k/\sqrt{n}$ with $k=0,\ldots,n-1$ for $\Gamma=L^*\!/L$. Then, the function $q\colon\Gamma\to\C^\times$ is
\begin{equation*}
q(k/\sqrt{n}+L)=\e(k^2/(4n))
\end{equation*}
for $k=0,\ldots,n-1$. Further, $L/2L=(\sqrt{n}\Z)/(2\sqrt{n}\Z)\cong\Z_2$ and $\delta\colon L/2L\to\{\pm1\}$ is given by
\begin{equation*}
\delta(\sqrt{n}+2L)=\e(1/(4n))
\end{equation*}
evaluated at the nontrivial element in $L/2L\cong\Z_2$. Hence, $\delta$ is the trivial character if $n=0\pmod{4}$ and the nontrivial character if $n=2\pmod{4}$. That is, considering $\bar{\delta}\in\Gamma/2\Gamma\cong(\sqrt{n}\Z)/(2\sqrt{n}\Z)$, we obtain $\bar{\delta}=2\sqrt{n}\Z$ when $n=0\pmod{4}$ and $\bar{\delta}=\sqrt{n}(1+2\Z)$ when $n=2\pmod{4}$. Hence, the Gauss sum is
\begin{align*}
G_{\bar{\delta}}(\Gamma,q^{-1})&=\sqrt{2/n}\sum_{\substack{k=1\\k\text{ even}}}^{n-1} \e(-k^2/(4n)),\\
G_{\bar{\delta}}(\Gamma,q^{-1})&=\sqrt{2/n}\sum_{\substack{k=1\\k\text{ odd}}}^{n-1} \e(-k^2/(4n))
\end{align*}
if $n=0\pmod{4}$ and $n=2\pmod{4}$, respectively. Finally, an explicit computation shows that $G_{\bar{\delta}}(\Gamma,q^{-1})=\e(7/8)$ in both cases, as asserted by \autoref{conj_partialGauss2}.
\end{ex}


\section{Proof of Main Theorem}\label{sec_app}

\begin{proof}[Proof of \autoref{thm_evenTY}]
We prove the theorem by explicitly checking all coherence conditions in $\cC=\LM(\Gamma,\sigma,\omega,\bar{\delta},\eps\,|\,q,\alpha,\beta)$. Recall that $G=\langle g\rangle\cong\Z_2$.

We shall use without mentioning that $\sigma(\cdot,\cdot)$ and $\omega(a,\cdot,\cdot)$ for any fixed $a\in\Gamma$ are symmetric and that $\omega(\cdot,b,c)$ is multiplicative in the first argument and takes values in $\{\pm 1\}$. We also remark that for the subsequent calculations it is helpful to initially ignore terms of the form $\omega(a,b,-b)$, which appear in several steps of the computation, and to only cancel them in the very end.

\smallskip

\noindent
\emph{Tensor Products:} We check that the pentagon identities hold in $\LM(\Gamma,\sigma,\omega,\bar{\delta},\eps)$, starting with the ones not involving any $\X^{\bar{x}}$, i.e.\ the ones in $\cC_1=\smash{\Vect_\Gamma^Q}$:
\pentagon{$\C_a$}{$\C_b$}{$\C_c$}{$\C_d$}
{$\omega(a,b,c)$}
{$\omega(a,b+c,d)$}
{$\omega(b,c,d)$}
{$\omega(a+b,c,d)$}
{$\omega(a,b,c+d)$}
Of course, this identity is satisfied precisely because $\omega$ is a $3$-cocycle on $\Gamma$, which was used in the construction of the tensor category $\smash{\Vect_\Gamma^\omega}=\cC_1$ in \autoref{ex_tensorVect}.

\smallskip

We now come to the pentagon relations involving a single $\X^{\bar{x}}$. First, we consider
\pentagon{$\X^{\bar{x}}$}{$\C_a$}{$\C_b$}{$\C_c$}
{$\baromega(\bar{x}+\bar{\delta},a,b)$}
{$\baromega(\bar{x}+\bar{\delta},a+b,c)$}
{$\omega(a,b,c)$}
{$\baromega((\bar{x}+\bar{a})+\bar{\delta},b,c)$}
{$\baromega(\bar{x}+\bar{\delta},a,b+c)$}
which holds because $\omega(a,b,c)$ cancels and $\omega$ with fixed first argument is a $2$-cocycle. Furthermore,
\pentagon{$\C_a$}{$\X^{\bar{x}}$}{$\C_b$}{$\C_c$}
{$\sigma(a,b)$}
{$\sigma(a,c)$}
{$\baromega(\bar{x}+\bar{\delta},b,c)$}{$\baromega(\bar{a}+\bar{x}+\bar{\delta},b,c)$}
{$\sigma(a,b+c)$}
\pentagon{$\C_a$}{$\C_b$}{$\X^{\bar{x}}$}{$\C_c$}
{$\baromega(\bar{a}+\bar{b}+\bar{x},a,b)$}
{$\sigma(a,c)$}
{$\sigma(b,c)$}
{$\sigma( {a+b},c)$}
{$\baromega(\bar{a}+\bar{b}+(\bar{x}+\bar{c}),a,b)$}
The first identity holds because the difference between $\sigma(a,b+c)$ and $\sigma(a, b)\sigma(a,c)$, which is the
coboundary of $\sigma(a,\cdot)$, is equal to $\omega(a,\cdot,\cdot)$. The second identity holds because $\sigma$ is symmetric, and so the same formula holds for the coboundary of $\sigma(\cdot,c)$.
Finally, there is the pentagon identity
\pentagon{$\C_a$}{$\C_b$}{$\C_c$}{$\X^{\bar{x}}$}
{$\omega(a,b,c)$}
{$\baromega(\bar{a}+(\bar{b}+\bar{c})+\bar{x}, a,b+c)$}
{$\baromega(\bar{b}+\bar{c}+\bar{x},b,c)$}
{$\baromega((\bar{a}+\bar{b})+\bar{c}+\bar{x}, a+b,c)$}
{$\baromega(\bar{a}+\bar{b}+(\bar{c}+\bar{x}),a,b)$}
which holds because $\omega(a+b+c,\cdot,\cdot)$ and $\baromega(\bar{x},\cdot,\cdot)$ are $2$-cocycles.

\smallskip

We then come to the pentagon relations involving $\X^{\bar{x}}$ and $\X^{\bar{y}}$:
\pentagon{$\X^{\bar{x}}$}{$\X^{\bar{y}}$}{$\C_a$}{$\C_b$}
{$\baromega(\bar{x},t,a)$}
{$\baromega(\bar{x},t+a,b)$}
{$\baromega(\bar{y}+\bar{\delta},a,b)$}
{$\omega(t,a,b)$}
{$\baromega(\bar{x},t,a+b)$}
Here, the initial basis vector is $(e_t\otimes e_a)\otimes e_b$ and the final basis vector is $e_{t+a+b}$. This identity holds because of the condition $\bar t=\bar x + \bar y + \bar{\delta}$, which we solve for~$\bar y$. Then the terms $\baromega(\bar{x},\cdot,\cdot)$ again cancel because of the $2$-cocycle condition. Also consider the pentagon identity
\pentagon{$\X^{\bar x}$}{$\C_a$}{$\X^{\bar y}$}{$\C_b$}
{$\sigma(a,t)$}
{$\baromega(\bar{x}, t,b)$}
{$\sigma(a,b)$}
{$\baromega(\bar{x}+\bar{a}, t,b)$}
{$\sigma(a,t+b)$}
where the initial basis vector is $e_t\otimes e_b$ and the final one is $e_{t+b}$. This pentagon identity holds because again the difference between $\sigma(a,t+b)$ and $\sigma(a,t)\sigma(a,b)$ is caught by $\omega(a,t,b)$. Furthermore,
\pentagon{$\X^{\bar x}$}{$\C_a$}{$\C_b$}{$\X^{\bar y}$}
{$\baromega(\bar{x}+\bar{\delta},a,b)$}
{$\sigma(a+b,t)$}
{$\baromega(\bar{a}+\bar{b}+\bar{y},a,b)$}
{$\sigma(b,t)$}
{$\sigma(a,t)$}
where the initial and final basis vector is $e_t$. The identity holds because of the condition $\bar{t}=(\bar{x}+\bar{a}+\bar{b})+\bar{y}+\bar{\delta}$ and since again $\omega(t,a,b)$ catches the difference between $\sigma(a+b,t)$ and $\sigma(a,t)\sigma(b,t)$. Then, we look at the pentagon identity
\pentagon{$\C_a$}{$\X^{\bar x}$}{$\X^{\bar y}$}{$\C_b$}
{$\baromega(\bar{a}+\bar{x},a,t)$}
{$\omega(a,t,b)$}
{$\baromega(\bar{x},t,b)$}
{$\baromega(\bar{a}+\bar{x},t+a,b)$}
{$\baromega(\bar{a}+\bar{x},a,t+b)$}
Here, the initial basis vector is $e_{a+t}\otimes e_b$ and the final basis vector is $e_a\otimes e_{b+t}$. This identity is true because the terms involving $\baromega(\bar{x},\cdot,\cdot)$ and $\omega(a,\cdot,\cdot)$ individually cancel due to the $2$-cocycle condition. Moreover, consider
\pentagon{$\C_a$}{$\X^{\bar x}$}{$\C_b$}{$\X^{\bar y}$}
{$\sigma(a,b)$}
{$\baromega(\bar{a}+(\bar{x}+\bar{b}),a,t)$}
{$\sigma(b,t)$}
{$\sigma(b,a+t)$}
{$\baromega(\bar{a}+\bar{x},a,t)$}
where the initial basis vector is $e_{a+t}$ and the final basis vector is $e_a\otimes e_t$. This pentagon identity holds as the terms involving $\baromega(\bar{x},\cdot,\cdot)$ and $\omega(a,\cdot,\cdot)$ individually cancel and again the difference between $\sigma(b,t)\sigma(b,a)$ and $\sigma(b,a+t)$ is compensated by $\omega(b,a,t)$. Finally,
\pentagon{$\C_a$}{$\C_b$}{$\X^{\bar x}$}{$\X^{\bar y}$}
{$\baromega(\bar{a}+\bar{b}+\bar{x},a,b)$}
{$\baromega(\bar{a}+(\bar{b}+\bar{x}),a,b+t)$}
{$\baromega(\bar{b}+\bar{x},b,t)$}
{$\baromega(\bar{a}+\bar{b}+\bar{x},a+b,t)$}
{$\omega(a,b,t)$}
where the initial basis vector is $e_{a+b+t}$ and the final basis vector is $e_a\otimes e_b\otimes e_t$. This identity is true because again the terms $\baromega(\bar{x},\cdot,\cdot)$, $\omega(a,\cdot,\cdot)$ and $\omega(b,\cdot,\cdot)$ cancel individually due to the $2$-cocycle condition.

\smallskip

We now come to the pentagon relations involving $\X^{\bar x}$, $\X^{\bar y}$ and $\X^{\bar z}$, starting with:
\pentagon{$\X^{\bar x}$}{$\X^{\bar y}$}{$\X^{\bar z}$}{$\C_a$}
{$\eps |2\Gamma|^{-1/2}\sigma(t,r)^{-1}$}
{$\baromega(\bar{x}+\bar{\delta},r,a)$}
{$\baromega(\bar{y},r,a)$}
{$\sigma(t,a)$}
{$\eps |2\Gamma|^{-1/2}\sigma(t,r+a)^{-1}$}
Here, the initial basis vector is $v_t$ and mapped to $v_r$ by the first arrow, the final basis vector is $v_{r+a}$. This identity holds because of the condition $\bar{t}=\bar{x}+\bar{y}+\bar{\delta}$, because again terms involving $\baromega(\bar{x},\cdot,\cdot)$ cancel and since the difference between $\sigma(t,r)\sigma(t,a)$ and $\sigma(t,r+a)$ is caught by $\omega(t,r,a)$. The normalisation factor $|2\Gamma|$ cancels independently of its value. Then we consider
\pentagon{$\X^{\bar x}$}{$\X^{\bar y}$}{$\C_a$}{$\X^{\bar z}$}
{$\baromega(\bar{x},t,a)$}
{$\eps |2\Gamma|^{-1/2}\sigma( {t+a},r)^{-1}$}
{$\sigma(a,r)$}
{$\baromega(\bar{t}+\bar{a}+\bar{z},t,a)$}
{$\eps |2\Gamma|^{-1/2}\sigma(t,r)^{-1}$}
where the initial basis vector is $v_{t+a}$ with $\bar{t}=\bar{x}+\bar{y}+\bar{\delta}$ and the final basis vector is $v_r$ with $\bar{r}=\bar{y}+(\bar{a}+\bar{z})+\bar{\delta}$. This means that $\bar{x}+\bar{z}=\bar{t}+\bar{r}+\bar{a}$. This pentagon identity then holds because $\omega(r,t,a)=\sigma(t,r)\sigma(a,r)\sigma(t+a,r)^{-1}$. Next, we look at
\pentagon{$\X^{\bar x}$}{$\C_a$}{$\X^{\bar y}$}{$\X^{\bar{z}}$}
{$\sigma(a,t)$}
{$\eps |2\Gamma|^{-1/2}\sigma(t,a+r)^{-1}$}
{$\baromega(\bar{a}+\bar{y},a,r)$}
{$\eps |2\Gamma|^{-1/2}\sigma(t,r)^{-1}$}
{$\baromega(\bar{x}+\bar{\delta},a,r)$}
Here, the initial basis vector is $v_t$ with $\bar{t}=(\bar{x}+\bar{a})+\bar{y}+\bar{\delta}$. Then this identity holds like in the previous case. Finally, consider the pentagon identity
\pentagon{$\C_a$}{$\X^{\bar x}$}{$\X^{\bar y}$}{$\X^{\bar z}$}
{$\baromega(\bar{a}+\bar{x},a,t)$}
{$\baromega(\bar{a}+\bar{t}+\bar{z},a,t)$}
{$\eps |2\Gamma|^{-1/2}\sigma(t,r)^{-1}$}
{$\eps |2\Gamma|^{-1/2}\sigma(t+a,r)^{-1}$}
{$\sigma(a,r)$}
where the initial basis vector is $v_{t+a}$ with $\overline{t+a}=(\bar{x}+\bar{a})+\bar{y}+\smash{\bar{\delta}}$ and the final basis vector is $e_a\otimes v_r$ with $\bar{r}=\bar{y}+\bar{z}+\smash{\bar{\delta}}$. Then, this identity holds after substituting these two expressions like in the previous cases.

\smallskip

We now come to the last type of pentagon relations, involving $\X^{\bar x}$, $\X^{\bar y}$, $\X^{\bar z}$ and~$\X^{\bar w}$. The innermost product $\X^{\bar x}\otimes\X^{\bar y}$ produces, after multiplication with further $\X$, a basis vector in the multiplicity space denoted by $v_t$, while the outermost product produces the resulting object $\C_s$ with basis vector denoted by $e_s$. These bases in the five products in the above diagram are, from left to right, $v_t\otimes e_{s+t}$, $v_r\otimes e_{s+t}$, $v_r\otimes e_{s+t}$, $v_l\otimes e_{s+t}$ and for the bottom node $v_t\otimes e_{s}$. With these bases, the pentagon identity reads
\pentagon{$\X^{\bar x}$}{$\X^{\bar y}$}{$\X^{\bar z}$}{$\X^{\bar w}$}
{$\eps |2\Gamma|^{-1/2}\sigma(t,r)^{-1}$}
{$\sigma(r,s+t)$}
{$\eps |2\Gamma|^{-1/2}\sigma(r,l)^{-1}$}
{$\baromega(\bar{t}+\bar{z},t,s)$}
{$\baromega(\bar{x},t,s)$}
Again, the conditions $\bar{t}=\bar{x}+\bar{y}+\bar{\delta}$, $\bar{r}=\bar{y}+\bar{z}+\bar{\delta}$ and $\bar{s}=\bar{l}=\bar{z}+\bar{w}+\bar{\delta}$ hold. Again, $\sigma(r,s+t)$ can be expressed as $\sigma(r,t)$, which cancels, times $\sigma(r,s)$ times $\omega(r,s,t)$. Then, after substituting $\bar{r}$ and $\bar{t}$, all $\omega$ cancel, as does the sign $\eps$. However, the factor $|2\Gamma|^{-1/2}|2\Gamma|^{-1/2}=|2\Gamma|^{-1}$ becomes important. The last pentagon identity then follows from the following slightly unusual character sum:
\begin{lem}\label{lm_characterSum}
For $s,l\in\Gamma$ with $\bar{s}=\bar{l}\in\Gamma/2\Gamma$ and for a coset $\bar{r}\in\Gamma/2\Gamma$,
\begin{align*}
|2\Gamma|^{-1}\sum_{r\in \bar r}
\sigma(r,s)
\sigma(r,l)^{-1}
&=\delta_{s,l}.
\end{align*}
\end{lem}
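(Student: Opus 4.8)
The plan is to split into the two cases $s=l$ and $s\neq l$. When $s=l$ the summand $\sigma(r,s)\sigma(r,l)^{-1}$ is identically $1$, and since $\bar r$ is a coset of $2\Gamma$ in $\Gamma$ it has exactly $|2\Gamma|$ elements, so the normalised sum equals $1=\delta_{s,l}$ with no further work. The whole content is therefore to prove that the sum \emph{vanishes} when $s\neq l$ while $\bar s=\bar l$ in $\Gamma/2\Gamma$. Since $\bar s=\bar l$, I would fix an element $m\in\Gamma$ with $s=l+2m$, and record that $s\neq l$ forces $2m\neq0$.

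The key step is to simplify the summand using the special form of the representing abelian $3$-cocycle $(\sigma,\omega)$ guaranteed by \autoref{ass_strongdisc}. Applying the additivity relation $\sigma(a,b+c)=\sigma(a,b)\sigma(a,c)\omega(a,b,c)$ together with $\sigma^2=B_Q$, I would rewrite $\sigma(r,l+2m)\sigma(r,l)^{-1}=\sigma(r,2m)\,\omega(r,l,2m)$ and then $\sigma(r,2m)=\sigma(r,m)^2\,\omega(r,m,m)=B_Q(r,m)\,\omega(r,m,m)$, obtaining $\sigma(r,s)\sigma(r,l)^{-1}=B_Q(r,m)\,\omega(r,m,m)\,\omega(r,l,2m)$. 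The crucial observation is that both $\omega$-factors have the shape $\omega(\cdot,b,c)$ and hence factor through $\Gamma/2\Gamma$ in their first argument; as $r$ ranges over the \emph{single} coset $\bar r$ they are constant, contributing only a global sign in $\{\pm1\}$ that cannot influence whether the sum is zero.

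It then remains to evaluate $\sum_{r\in\bar r}B_Q(r,m)$. Fixing a representative $r_0$ of $\bar r$ and writing $r=r_0+w$ with $w$ ranging over $2\Gamma$, bimultiplicativity of $B_Q$ gives $B_Q(r,m)=B_Q(r_0,m)\,B_Q(w,m)$, so the sum factors as $B_Q(r_0,m)\sum_{w\in 2\Gamma}B_Q(w,m)$. The map $w\mapsto B_Q(w,m)$ is a character of the finite group $2\Gamma$, so by orthogonality the inner sum is $0$ unless this character is trivial on $2\Gamma$. Using $B_Q(2\rho,m)=B_Q(\rho,2m)$ and the nondegeneracy of $B_Q$ (which holds because $(\Gamma,Q)$ is a discriminant form), triviality on $2\Gamma$ is equivalent to $2m=0$, i.e.\ to $s=l$. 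Hence for $s\neq l$ the inner sum vanishes and the entire expression is zero, which finishes the argument.

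I do not expect a genuine obstacle: once the summand is rewritten, the computation is short. The only point demanding care is the bookkeeping of the $\{\pm1\}$-valued $\omega$-factors — one must confirm that each depends on $r$ only through $\bar r\in\Gamma/2\Gamma$, so that it pulls out of the coset sum as a constant. The conceptual heart of the proof is the orthogonality of the character $w\mapsto B_Q(w,m)$ on $2\Gamma$ combined with nondegeneracy of $B_Q$.
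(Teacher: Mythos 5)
Your proof is correct and follows essentially the same route as the paper's: rewrite the summand via the coboundary property of $\sigma$ so that it becomes the character $B_Q(\cdot,m)$ (with $2m=s-l$) times $\omega$-factors that are constant on the coset $\bar r$, then apply orthogonality of this character over the subgroup $2\Gamma$ together with nondegeneracy of $B_Q$. The only differences are organisational (your explicit case split and the parametrisation $s=l+2m$ versus the paper's direct use of $\sigma(\cdot,l-s)=B_Q(\cdot,(l-s)/2)$), and your bookkeeping of the $\{\pm1\}$-valued $\omega$-factors is, if anything, slightly more careful.
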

\begin{proof}
The summand can be rewritten as $\sigma({r},{s})\sigma(r,l)^{-1}=\omega(r,s,l-s)\sigma(r,l-s)$. Let $r_0$ be some fixed coset representative for $\bar{r}$, so that we may write the summation variable as $r=r_0+a$ with $a\in 2\Gamma$. Now, the assumption $\bar{s}=\bar{l}$ means that $l-s\in 2\Gamma$, so that we may write $l-s=2y$ for some $y\in\Gamma$. Then
\begin{equation*}
\sigma(x,2y)=\sigma(x,y)^2\baromega(\bar{x},y,y)=B_Q(x,y)\baromega(\bar{x},y,y)
\end{equation*}
for all $x\in\Gamma$, and hence $\sigma(\cdot,2y)$ is a character on $\Gamma$. Moreover, $\omega(r,s,l-s)$ is always a character as a function of $r$, and on any fixed $2\Gamma$-coset $\bar{r}$ it is the constant $\omega(\bar{r},s,l-s)$. Hence, the character sum is
\begin{equation*}
|2\Gamma|^{-1}\!\!\sum_{r\in r_0+2\Gamma}\!\!\sigma(r,s)\sigma(r,l)^{-1}=|2\Gamma|^{-1}\omega(\bar{r},s,l-s)\sigma(r_0,l-s)\sum_{a\in 2\Gamma}\sigma(a,l-s).
\end{equation*}
By the orthogonality of characters,
\begin{equation*}
|2\Gamma|^{-1}\sum_{a\in 2\Gamma}\sigma(a,2y)=
\begin{cases}
1 & \text{if } \sigma(\cdot,2y)=1 \text{ on } 2\Gamma,\\
0 & \text{otherwise.}
\end{cases}
\end{equation*}
Since $\sigma(2u,2y)=\sigma(u,2y)^2=B_Q(u,2y)$ for all $u\in\Gamma$, this character is trivial on $2\Gamma$ if and only if $B_Q(\cdot,2y)$ is trivial on $\Gamma$, i.e.\ if $2y=0$. Hence, the sum vanishes unless $l=s$, and in this case the factor $\omega(\bar{r},s,l-s)\sigma(r_0,l-s)$ is $1$.
\end{proof}

\noindent
\emph{$G$-Action:} We now discuss the $\Z_2$-action on $\LM(\Gamma,\sigma,\omega,\bar{\delta},\eps)$; cf.\ \autoref{def_Gcrossed}. The compatibility between the trivial composition structure and the tensor structure amounts to the fact that all tensor structures are scalars in $\{\pm 1\}$ and hence square to one.

To prove the coherences of the tensor structures, which have to compensate for the associators not being $\Z_2$-invariant, we collect some formulae on $\sigma$ and $\omega$ with negative entries: by definition,
\begin{equation}\label{eq_sigmanegativeR}
\sigma(a,-b)=\sigma(a,b)^{-1}\omega(a,b,-b)^{-1}
\end{equation}
and by symmetry
\begin{equation}\label{eq_sigmanegativeL}
\sigma(-a,b)=\sigma(a,b)^{-1}\omega(b,a,-a)^{-1}
\end{equation}
for $a,b\in\Gamma$. From this we derive the following additivity property for $\omega(a,b,-b)$ in~$b$, which can be solved to express $\omega(a,-b,-c)$:
\begin{align}\label{eq_omeganegative}
\omega(a,b+c,-b-c)^{-1}
&=\sigma(a,b+c)\sigma(a,-b-c)\\
&=\sigma(a,b)\sigma(a,c)\omega(a,b,c)
\sigma(a,-b)\sigma(a,-c)\omega(a,-b,-c) \notag\\
&=\omega(a,b,-b)^{-1}\omega(a,c,-c)^{-1}
\omega(a,b,c)\omega(a,-b,-c)\notag
\end{align}
for $a,b,c\in\Gamma$. We also note for later use that from the $2$-cocycle condition for $\omega(a,\cdot,\cdot)$ it follows immediately that
\begin{equation}\label{eq_bplusc}
\omega(a,-b,b+c)=\omega(a,b,c)^{-1}\omega(a,b,-b)
\end{equation}
for $a,b,c\in\Gamma$.

Formula~\eqref{eq_omeganegative} then (re)proves the first of the coherence identities:
\longsquare{$\C_a$}{$\C_b$}{$\C_c$}
{$\omega(a,b,c)$}{$\omega(-a,-b,-c)$}
{$\omega(a+b,c,-c)$}{$\omega(a,b,-b)$}
{$\omega(a,b+c,-b-c)$}{$\omega(b,c,-c)$}
Similarly, the identities involving a single $\X^{\bar x}$ hold:
\longsquare{$\X^{\bar x}$}{$\C_a$}{$\C_b$}
{$\baromega(\bar{x}+\bar{\delta},a,b)$}{$\baromega(\bar{x}+\bar{\delta},-a,-b)$}
{$\baromega((\bar{x}+\bar{a})+\bar{\delta},b,-b)$}{$\baromega(\bar{x}+\bar{\delta},a,-a)$}
{$\baromega(\bar{x}+\bar{\delta},a+b,-a-b)$}{$\omega(a,b,-b)$}
\longsquare{$\C_a$}{$\X^{\bar x}$}{$\C_b$}
{$\sigma(a,b)$}{$\sigma(-a,-b)$}
{$\baromega((\bar{a}+\bar{x})+\bar{\delta},b,-b)$}{$\baromega(\bar{a}+\bar{x},a,-a)$}
{$\baromega(\bar{a}+(\bar{x}+\bar{b}),a,-a)$}{$\baromega(\bar{x}+\bar{\delta},b,-b)$}
\longsquare{$\C_a$}{$\C_b$}{$\X^{\bar x}$}
{$\baromega(\bar{a}+\bar{b}+\bar{x},a,b)$}{$\baromega(-\bar{a}-\bar{b}+\bar{x},-a,-b)$}
{$\baromega(\bar{a}+\bar{b}+\bar{x},a+b,-a-b)$}{$\omega(a,b,-b)$}
{$\baromega(\bar{a}+(\bar{b}+\bar{x}),a,-a)$}{$\baromega(\bar{b}+\bar{x},b,-b)$}
For the identities involving $\X^{\bar x}$ and $\X^{\bar y}$, we have to be again explicit about the bases in the multiplicity spaces and moreover consider the action of $g_*$ on them:
\longsquare{$\X^{\bar x}$}{$\X^{\bar y}$}{$\C_a$}
{$\baromega(\bar{x},t,a)$}{$\baromega(\bar{x},-t,-a)$}
{$\omega(t,a,-a)$}{$\baromega(\bar{x},t,-t)$}
{$\baromega(\bar{x},t+a,-t-a)$}{$\baromega(\bar{y}+\bar{\delta},a,-a)$}
Here, we used the condition $\bar y+\bar{\delta}=\bar x+\bar t$ for the basis vector $g_*(e_t\otimes e_a)$ (top left) mapped to the basis vector $g_*(e_{t+a})$ (bottom left) and $e_{-t}\otimes e_{-a}$ (top right) to $e_{-t-a}$ (bottom right). The identity is again essentially the formula~\eqref{eq_omeganegative} for $\baromega(\bar{x},t+a,-t-a)$. Then, consider
\longsquare{$\X^{\bar x}$}{$\C_a$}{$\X^{\bar y}$}
{$\sigma(a,t)$}{$\sigma(-a,-t)$}
{$\baromega(\bar{x}+\bar{a},t,-t)$}{$\baromega(\bar{x}+\bar{\delta},a,-a)$}
{$\baromega(\bar{x},t,-t)$}{$\baromega(\bar{a}+\bar{y},a,-a)$}
where we used the condition $\bar y=\bar x+(\bar t+\bar a)+\bar{\delta}$. Next, look at
\longsquare{$\C_a$}{$\X^{\bar x}$}{$\X^{\bar y}$}
{$\baromega(\bar{a}+\bar{x},a,t)$}{$\baromega(-\bar{a}+\bar{x},-a,-t)$}
{$\baromega(\bar{a}+\bar{x},a+t,-a-t)$}{$\baromega(\bar{a}+\bar{x},a,-a)$}
{$\omega(a,t,-t)$}{$\baromega(\bar{x},t,-t)$}
Here, we did not use the condition $\bar y=\bar x+\bar t+\bar{\delta}$. Finally, we come to
\longsquare{$\X^{\bar x}$}{$\X^{\bar y}$}{$\X^{\bar z}$}
{$\eps |2\Gamma|^{-1/2}\sigma(t,r)^{-1}$}{$\eps |2\Gamma|^{-1/2}\sigma(-t,-r)^{-1}$}
{$\baromega(\bar{t}+\bar{z},t,-t)$}{$\baromega(\bar{x},t,-t)$}
{$\baromega(\bar{x}+\bar{\delta},r,-r)$}{$\baromega(\bar{y},r,-r)$}
where $t$ and $r$ satisfy, as for the associator, the conditions $\bar t=\bar x+\bar y+\bar{\delta}$ and $\bar r=\bar y+\bar z+\bar{\delta}$. This lets us rewrite the expressions
\begin{equation*}
\baromega(\bar{z}+\bar{x},t,-t)=\omega(t+r,t,-t),\quad
\baromega(\bar{x}+\bar{\delta}+\bar{y},r,-r)=\omega(t,r,-r).
\end{equation*}
Then, the identity follows from the formulae \eqref{eq_sigmanegativeR} and \eqref{eq_sigmanegativeL} for $\sigma(r,s)\sigma(-r,-s)^{-1}$. This concludes the proof of the tensor structure. We remark that we did not use any information on $\eps$ or $|2\Gamma|$.

\smallskip

\noindent
\emph{Compatibility of $G$-action and $G$-braiding:} We verify the commutativity of the diagram in condition~(b) of \autoref{def_G-crossed}. Since our $G$-action is strict ($\smash{T_2^{g,h}}=\id$), this amounts to the equality
\begin{equation*}
\tau^g_{h_*Y,X}\circ g_*(c_{X,Y})=c_{g_*(X),g_*(Y)}\circ \tau^g_{X,Y}
\end{equation*}
for $g,h\in G$, $X\in\cC_h$, $Y\in\cC$. We check this degree-wise.

If $X,Y\in\cC_1$, the identity reduces to $\sigma(-a,-b)\omega(a,b,-b)=\omega(b,a,-a)\sigma(a,b)$ for all $a,b\in\Gamma$, which holds because $\omega(a,b,-b)=\omega(-a,b,-b)$ since $\omega(\cdot,b,-b)$ factors through~$\Gamma/2\Gamma$.

Now, let $X\in\cC_g$ and $Y\in\cC_1$. Take $X=\X^{\bar x}$ and $Y=\C_a$ for $\bar{x}\in\Gamma/2\Gamma$ and $a\in\Gamma$. Using the braiding \eqref{eq_LM_braiding} and the tensor structures \eqref{eq_LM_tau},
{\allowdisplaybreaks
\begin{align*}
c_{X,Y}=q(a)^{-1}\baromega(\bar{x}+\bar{a},a,-a)&\colon \X^{\bar x}\!\otimes\!\C_a\to \C_{-a}\!\otimes\!\X^{\bar x},\\
\tau^g_{X,Y}=\baromega(\bar{x}+\bar{\delta},a,-a)&\colon g_*(\X^{\bar x}\!\otimes\!\C_a)\to \X^{\bar x}\!\otimes\!\C_{-a},\\
\tau^g_{g_*Y,X}=\baromega(\bar{x}+\bar{a},a,-a)&\colon g_*(\C_{-a}\!\otimes\!\X^{\bar x})\to \C_a\!\otimes\!\X^{\bar x},\\
c_{g_*X,g_*Y}=q(-a)^{-1}\baromega(\bar{x}+\bar{a},a,-a)&\colon \X^{\bar x}\!\otimes\!\C_{-a}\to \C_a\!\otimes\!\X^{\bar x},
\end{align*}
}%
where in the last two lines we used $\baromega(\bar{x}-\bar{a},-a,a)=\baromega(\bar{x}+\bar{a},a,-a)$ since $\bar{x}-\bar{a}=\bar{x}+\bar{a}$ in $\Gamma/2\Gamma$ and $\omega$ is symmetric in the last two arguments.
Hence, the commutativity reduces to the scalar identity
\begin{equation*}
\baromega(\bar{x}+\bar{a},a,-a)\,q(a)^{-1}\baromega(\bar{x}+\bar{a},a,-a)
=
q(-a)^{-1}\baromega(\bar{x}+\bar{a},a,-a)\,\baromega(\bar{x}+\bar{\delta},a,-a).
\end{equation*}
Since $\baromega(\bar{x}+\bar{a},a,-a)^2=1$, the left-hand side simplifies to $q(a)^{-1}$. On the right-hand side, $\baromega(\bar{x}+\bar{a},a,-a)\baromega(\bar{x}+\bar{\delta},a,-a)=\baromega(\bar{a}+\bar{\delta},a,-a)$ by the homomorphism property in the first argument and $\baromega(2\bar{x},a,-a)=1$. Thus, the identity becomes
\begin{equation*}
q(a)^{-1}=q(-a)^{-1}\baromega(\bar{a}+\bar{\delta},a,-a),
\end{equation*}
which is equivalent to \eqref{eq_qminus}.

The case $X\in\cC_1$, $Y\in\cC_g$ is analogous: for $X=\C_a$, $Y=\X^{\bar x}$ with $a\in\Gamma$ and $\bar{x}\in\Gamma/2\Gamma$,
\begin{align*}
c_{X,Y}&=q(a)^{-1}\colon \C_a\!\otimes\!\X^{\bar x}\to \X^{\bar x}\!\otimes\!\C_a,
&
c_{g_*X,g_*Y}&=q(-a)^{-1}\colon \C_{-a}\!\otimes\!\X^{\bar x}\to \X^{\bar x}\!\otimes\!\C_{-a},\\
\tau^g_{X,Y}&=\baromega(\bar{a}+\bar{x},a,-a),
&
\tau^g_{g_*Y,X}&=\tau^g_{\X^{\bar x},\C_a}=\baromega(\bar{x}+\bar{\delta},a,-a),
\end{align*}
and the identity again reduces to \eqref{eq_qminus} by the same computation as above.

Finally, consider the case $X,Y\in\cC_g$ with $X=\X^{\bar x}$, $Y=\X^{\bar y}$ for $\bar{x},\bar{y}\in\Gamma/2\Gamma$, so that $X\otimes Y=\bigoplus_{\bar t=\bar x+\bar y+\bar{\delta}}\C_t$.
Fix one summand $\C_t$. The functor $g_*$ sends $\C_t$ to $\C_{-t}$, so we trace the diagram for the summand $\C_{-t}$ of $g_*(X\otimes Y)$. The relevant scalars are
\begin{equation*}
g_*(c_{X,Y})=\alpha\,q(t),\quad\!\!
c_{g_*X,g_*Y}=\alpha\,q(-t),\quad\!\!
\tau^g_{X,Y}=\baromega(\bar{x},t,-t),\quad\!\!
\tau^g_{g_*Y,X}=\baromega(\bar{y},t,-t)
\end{equation*}
by \eqref{eq_LM_braiding} and \eqref{eq_LM_tau}, where $g_*(c_{X,Y})$ carries the scalar $\alpha\,q(t)$ from the original summand $\C_t$, while $c_{g_*X,g_*Y}$ acts on $\C_{-t}$ with scalar $\alpha\,q(-t)$. Hence, the commutativity reduces to
\begin{equation*}
\baromega(\bar{y},t,-t)\,\alpha\, q(t)=\alpha\, q(-t)\,\baromega(\bar{x},t,-t).
\end{equation*}
Cancelling $\alpha$ and applying \eqref{eq_qminus}, i.e.\ $q(-t)=q(t)\baromega(\bar{t}+\bar{\delta},t,-t)$, the right-hand side becomes $q(t)\baromega(\bar{t}+\bar{\delta}+\bar{x},t,-t)$. Now, $\bar{t}=\bar{x}+\bar{y}+\bar{\delta}$ implies that $\bar{t}+\bar{\delta}+\bar{x}=\bar{y}$ in $\Gamma/2\Gamma$, and hence the identity holds.
\smallskip

\noindent
\emph{$G$-Braiding:} For the $\Z_2$-braiding on $\LM(\Gamma,\sigma,\omega,\bar{\delta},\eps\,|\,q,\alpha,\beta)$ we verify the hexagon identities; cf.\ \autoref{def_G-crossed}.

The hexagon identity for $\C_a$, $\C_b$, $\C_c$ and $\C_d$ is known to hold in $\cC_1=\smash{\Vect_\Gamma^{(\sigma,\omega)}}$, corresponding to the abelian $3$-cocycle $(\sigma,\omega)$. It is satisfied as $\omega(b,c,a)=\omega(b,a,c)$ and $\omega(a,\cdot,\cdot)$ is the coboundary of $\sigma(a,\cdot)$, and similarly for the inverse hexagon identity:
\hexagon{$\C_a$}{$\C_b$}{$\C_b$}{$\C_c$}{$\C_c$}
{$\omega(a,b,c)$}{$\sigma(a,b+c)$}{$\omega(b,c,a)$}
{$\sigma(a,b)$}{$\omega(b,a,c)$}{$\sigma(a,c)$}

\smallskip

We come to the hexagon identities involving one $\X^{\bar x}$, starting with
\hexagon{$\X^{\bar x}$}{$\C_a$}{$\C_{-a}$}{$\C_b$}{$\C_{-b}$}
{$\baromega(\bar{x}+\bar{\delta},a,b)$}{$\substack{q(a+b)^{-1}\baromega(\bar{x}+\bar{a}+\bar{b},a+b,-a-b)\\ \omega(a,b,-b)}$}{$\baromega(-\bar{a}-\bar{b}+\bar{x},-a,-b)$}
{$q(a)^{-1}\baromega(\bar{x}+\bar{a},a,-a)$}{$\sigma(-a,b)$}{$q(b)^{-1}\baromega(\bar{x}+\bar{b},b,-b)$}
Take note of the appearance of a tensor structure in the top arrow. This hexagon identity can be rewritten using formula~\eqref{eq_sigmanegativeL} for $\sigma(-a,b)$ and formula~\eqref{eq_omeganegative} for $\baromega(\bar{x}+\bar{a}+\bar{b},a+b,-a-b)$. Then all terms $\baromega(\bar{x},\cdot,\cdot)$ cancel and all terms $\omega(a+b,\cdot,\cdot)$ cancel except for a new $\omega(a+b,a,b)$. Then the identity is precisely the defining property of $q$, $q(a+b)q(a)^{-1}q(b)^{-1}=\sigma(a,b)\,\omega(a+b,a,b)\baromega(\bar{\delta},a,b)$ for all $a,b\in\Gamma$. Furthermore, consider
\hexagon{$\C_a$}{$\X^{\bar x}$}{$\X^{\bar x}$}{$\C_b$}{$\C_b$}
{$\sigma(a,b)$}{$q(a)^{-1}$}{$\baromega(\bar{x}+\bar{\delta},b,a)$}
{$q(a)^{-1}$}{$\baromega(\bar{x}+\bar{\delta},b,a)$}{$\sigma(a,b)$}
\hexagon{$\C_a$}{$\C_b$}{$\C_b$}{$\X^{\bar x}$}{$\X^{\bar x}$}
{$\baromega(\bar{a}+\bar{b}+\bar{x},a,b)$}{$q(a)^{-1}$}{$\sigma(b,a)$}
{$\sigma(a,b)$}{$\baromega(\bar{b}+\bar{a}+\bar{x},b,a)$}{$q(a)^{-1}$}
Both identities hold directly. If the braiding were not given, these identities would show that the braiding of $\C_a\otimes\X^{\bar x}$ coincides with the braiding of $\C_a\otimes\X^{\bar x+\bar b}$, so that it is independent of $\bar x$.

The inverse hexagon identities in these cases are:
\hexagonInverse{$\X^{\bar x}$}{$\C_a$}{$\C_b$}{$\C_b$}{$\C_{-b}$}
{$\baromega(\bar{x}+\bar{\delta},a,b)^{-1}$}{$q(b)^{-1}\baromega((\bar{x}+\bar{a})+\bar{b},b,-b)$}
{$\sigma(-b,a)^{-1}$}
{$\sigma(a,b)$}
{$\baromega(\bar{x}+\bar{\delta},b,a)^{-1}$}
{$q(b)^{-1}\baromega(\bar{x}+\bar{b},b,-b)$}
\hexagonInverse{$\C_a$}{$\X^{\bar x}$}{$\C_b$}
{$\C_{-b}$}{$\C_{-b}$}
{$\sigma(a,b)^{-1}$}
{$q(b)^{-1}\baromega((\bar{x}+\bar{a})+\bar{b},b,-b)$}
{$\baromega(-\bar{b}+\bar{a}+\bar{x},-b,a)^{-1}$}
{$q(b)^{-1}\baromega(\bar{x}+\bar{b},b,-b)$}
{$\baromega(\bar{a}-\bar{b}+\bar{x},a,-b)^{-1}$}
{$\sigma(a,-b)$}
In these two identities, most terms cancel, and we use formula~\eqref{eq_sigmanegativeL} to express $\sigma(-b,a)$ and $\sigma(a,-b)$ in terms of $\sigma(a,b)$ and $\omega(a,b,-b)$, respectively, which cancels with the corresponding term on the top arrow. If the braiding were not given, these identities would show that the braiding of $\X^{\bar x}\otimes \C_b$ coincides with the braiding of $\X^{\bar x+\bar a} \otimes \C_b $ up to a factor $\omega(a,b,-b)$, so it demands the $\bar{x}$-dependency of $\baromega(\bar{x},b,-b)$. Next, we consider
\hexagonInverse{$\C_a$}{$\C_b$}{$\X^{\bar x}$}{$\X^{\bar x}$}{$\X^{\bar x}$}
{$\baromega(\bar{a}+\bar{b}+\bar{x},a,b)^{-1}$}
{$q(a+b)^{-1}$}
{$\baromega(\bar{x}+\bar{\delta},a,b)^{-1}$}
{$q(b)^{-1}$}
{$\sigma(a,b)^{-1}$}
{$q(a)^{-1}$}
This hexagon identity again amounts to the defining property of $q$. If the braiding were not given, one might wish to treat this identity (involving the easier braiding $\C_a\otimes \X^ {\bar x}$ and no tensor structure) before the analogous hexagon identity $\X^{\bar x}\otimes \C_a\otimes \C_b$ above (involving the braiding $\X^{\bar x}\otimes \C_a$).

\smallskip

We come to the hexagon identities involving $\X^{\bar x}$ and $\X^{\bar y}$. First, consider
\hexagon{$\X^{\bar x}$}{$\X^{\bar y}$}{$\X^{\bar y}$}{$\C_a$}{$\C_{-a}$}
{$\baromega(\bar{x},t,a)$}
{$\substack{ \alpha q(t+a) \\ \baromega(\bar{y}+\bar{\delta},a,-a)}$}
{$\sigma(-a,t+a)$}
{$ \alpha q(t)$}
{$\baromega(\bar{y},t,a)$}
{$q(a)^{-1}\baromega(\bar{x}+\bar{a},a,-a)$}
We apply formula~\eqref{eq_sigmanegativeL} to $\sigma(-a,t+a)$ and the condition $\bar{t}=\bar{x}+\bar{y}+\bar{\delta}$ to $\omega(\cdot,a,-a)$ and $\omega(\cdot,t,a)$. The parameter $\alpha$ simply cancels at this point. Then the hexagon identity amounts to the other defining condition of $q$, $q(a)^2=\sigma(a,a)$ for $a\in\Gamma$, and again the additivity condition on $q$. If the braiding were not given, this identity would show that the braiding of $\X^{\bar x}\otimes \X^{\bar y}$ and that of $\X^{\bar x}\otimes \X^{\bar y+\bar a}$ differ by the braiding of $\X^{\bar x}\otimes \C_a$ by this additivity condition. Next, we look at
\hexagon{$\X^{\bar x}$}{$\C_a$}{$\C_{-a}$}{$\X^{\bar y}$}{$\X^{\bar y}$}
{$\sigma(a,t)$}
{$\substack{ \alpha q(t) \\ \baromega(\bar{a}+\bar{y},a,-a)}$}
{$\baromega(-\bar{a}+\bar{y},-a,a+t)$}
{$q(a)^{-1}\baromega(\bar{x}+\bar{a},a,-a)$}
{$\baromega(-\bar{a}+\bar{x},-a,a+t)$}
{$ \alpha q(a+t) $}
where as initial basis vector we choose $e_t$ so that the final basis vector is $e_{-a}\otimes e_{a+t}$, with $\bar{t}=(\bar{x}+\bar{a})+\bar{y}+\bar{\delta}$. Then we collect and rewrite $\baromega(\bar{x}+\bar{y},-a,a+t)\baromega(\bar{x}+\bar{y},a,-a)$ using formula \eqref{eq_bplusc} to $\baromega(\bar{x}+\bar{y},a,t)$, which is equal to $\baromega(\bar{t}+\bar{a}+\bar{\delta},a,t)$. Then the hexagon identity is again the defining additivity relation of $q$. Moreover,
\hexagon{$\C_a$}{$\X^{\bar x}$}{$\X^{\bar x}$}{$\X^{\bar y}$}{$\X^{\bar y}$}
{$\baromega(\bar{x}+\bar{a},a,t)$}
{$\sigma(a,t)$}
{$\baromega(\bar{x},t,a)$}
{$q(a)^{-1}$}
{$\sigma(a,t+a)$}
{$q(a)^{-1}$}
where as initial basis vector we choose $e_{a+t}$. This hexagon identity holds again because of the defining relation
$q(a)^2=\sigma(a,a)$.

The inverse hexagon identities in these cases are as follows:
\hexagonInverse{$\X^{\bar x}$}{$\X^{\bar y}$}{$\C_a$}{$\C_{-a}$}{$\C_a$}
{$\baromega(\bar{x},t,a)^{-1}$}
{$\sigma(t,a)$}
{$\baromega(\bar{a}+\bar{x},a,t)^{-1}$}
{$q(a)^{-1}\baromega(\bar{y}+\bar{a},a,-a)$}
{$\sigma(-a,t+a)^{-1}$}
{$q(-a)^{-1}\baromega(\bar{x}-\bar{a},-a,a)$}
Here, as initial basis vector we choose $e_{t+a}$ with $\bar{t}=\bar{x}+\bar{y}+\bar{\delta}$ and consequently write $\baromega(\bar{y}+\bar{a},a,-a)\baromega(\bar{x}-\bar{a},a,-a)=\omega(t,a,-a)\baromega(\bar{\delta},a,-a)$. We rewrite, using the coboundary property in \autoref{ass_strongdisc} and equation~\eqref{eq_sigmanegativeL},
\begin{align*}
\sigma(-a,t+a)&=\sigma(-a,t)\sigma(-a,a)\omega(-a,t,a)\\
&=\sigma(a,t)\omega(t,a,-a)\sigma(-a,a)\omega(-a,t,a).
\end{align*}
Then the hexagon identity reduces to the following special case of the additivity property
\begin{equation*}
q(a)q(-a)=\sigma(a,-a)^{-1}\baromega(\bar{\delta},a,-a).
\end{equation*}
For later use, using $\sigma(a,-a)=\sigma(a,a)^{-1}\omega(a,a,-a)$ and $\sigma(a,a)=q(a)^2$, we show further that
\begin{align}\label{eq_qminus}
q(-a)&=q(a)\baromega(\bar{a}+\bar{\delta},a,-a).
\end{align}
We further consider the identities
\hexagonInverse{$\X^{\bar x}$}{$\C_a$}{$\X^{\bar y}$}{$\X^{\bar y}$}{$\X^{\bar y}$}
{$\sigma(a,t+a)^{-1}$}
{$\alpha q(t+a)$}
{$\baromega(\bar{y},t,a)^{-1}$}
{$q(a)^{-1}$}
{$\baromega(\bar{x},t,a)^{-1}$}
{$\alpha q(t)$}
\hexagonInverse{$\C_a$}{$\X^{\bar x}$}{$\X^{\bar y}$}{$\X^{\bar y}$}{$\X^{\bar y}$}
{$\baromega(\bar{x}+\bar{a},a,t)^{-1}$}
{$\alpha q(a+t)$}
{$\sigma(a,a+t)^{-1}$}
{$\alpha q(t)$}
{$\baromega(\bar{y}+\bar{a},a,t)^{-1}$}
{$q(a)^{-1}$}
As initial basis vector in these two diagrams we choose $e_{t+a}$ and $e_a\otimes e_t$, respectively, in both cases with $\bar{t}=\bar{x}+\bar{y}+\bar{\delta}$. In both cases, the hexagon identity follows from the coboundary property for $\sigma(a,a+t)$ and $q(a)^2=\sigma(a,a)$ and the additivity property for $q$.

\smallskip

We finally come to the hexagon identities and inverse hexagon identities involving $\X^{\bar x}$, $\X^{\bar y}$ and $\X^{\bar z}$. These are the only ones where the associator of $\X\otimes \X\otimes \X$ appears, as well as the tensor structure on $\X\otimes \X$. This is also the only identity where the value of $\alpha$ is relevant. We start with the inverse hexagon identity, which is simpler in some regards:
\hexagonInverse{$\X^{\bar x}$}{$\X^{\bar y}$}{$\X^{\bar z}$}{$\X^{\bar z}$}{$\X^{\bar z}$}
{$\eps |2\Gamma|^{-1/2}\sigma(r,t)$}
{$q(t)^{-1}$}
{$\eps |2\Gamma|^{-1/2}\sigma(t,s)$}
{$\alpha q(r)$}
{$\eps |2\Gamma|^{-1/2}\sigma(r,s)$}
{$\alpha q(s)$}
Here, $\bar{r}=\bar{y}+\bar{z}+\bar{\delta}$ and $\bar{t}=\bar{x}+\bar{y}+\bar{\delta}$ and $\bar{s}=\bar{z}+\bar{x}+\bar{\delta}$, and we note that the inverse of the matrix $(|2\Gamma|^{-1/2}\sigma(t,r)^{-1})_{t,r}$ is $(|2\Gamma|^{-1/2}\sigma(r,t))_{t,r}$, where $r,t$ run over arbitrary fixed $2\Gamma$-cosets, by the character sum in \autoref{lm_characterSum}. We spell out the identity in question, using as summation condition $\bar{x}+\bar{y}+\bar{\delta}=\bar{r}+\bar{s}+\bar{\delta}$:
\begin{align*}
\alpha q({r})\sigma({r},{s})\alpha q({s})
&=\eps|2\Gamma|^{-1/2} \sum_{t\in \bar{r}+\bar{s}+\bar{\delta}} \sigma(r,t)q(t)^{-1}\sigma(t,s).
\end{align*}
This is a somewhat modified Gauss sum. We reduce it to a formula that only depends on $r+s$: we use $q(r)\sigma(r,s)q(s)=q(r+s)\omega(r+s,r,s)\baromega(\bar{\delta},r,s)$ on the left-hand side, and on the right-hand side $\sigma(t,r)\sigma(t,s)= \sigma(t,r+s) \omega(t,r,s)$. Then, by the summation condition on $t$, the terms involving $\omega$ on both sides cancel and the identity in question becomes
\begin{align*}
\alpha^2 q(r+s)
&=\eps|2\Gamma|^{-1/2} \sum_{t\in \bar{r}+\bar{s}+\bar{\delta}} \sigma(t,r+s)q(t)^{-1}.
\end{align*}
We may reduce to the case $r+s=0$. Indeed, let $b=r+s$ and substitute $t=a+r+s$ with $a\in \bar{\delta}$. Then, using again the additivity property of $q$ gives
\begin{align*}
\alpha^2 q(b)
&=\eps|2\Gamma|^{-1/2}\sum_{a\in \bar{\delta}}\sigma(a+b,b) \sigma(a,b)^{-1}q(a)^{-1}q(b)^{-1}\baromega(\bar{a}+\bar{b}+\bar{\delta},a,b).
\end{align*}
Furthermore, using the coboundary property for $\sigma(\cdot,b)$, the property $\sigma(b,b)=q(b)^2$ and $\baromega(\bar{a}+\bar{\delta},\cdot,\cdot)=\baromega(0,\cdot,\cdot)=1$ by the summation condition reduces the identity in question to the following identity independent of $r$ and $s$:
\begin{align*}
\alpha^2
&=\eps|2\Gamma|^{-1/2}\sum_{a\in \bar{\delta}} q(a)^{-1}.
\end{align*}
We defined $\alpha$ just so that this holds, and hence the above inverse hexagon identity is satisfied. Next, we consider the hexagon identity
\hexagon{$\X^{\bar x}$}{$\X^{\bar y}$}{$\X^{\bar y}$}{$\X^{\bar z}$}{$\X^{\bar z}$}
{$\eps |2\Gamma|^{-1/2}\sigma(t,r)^{-1}$}
{$\substack{q(r)^{-1}\baromega(\bar{x}+\bar{r},r,-r)\\ \baromega(\bar{y},r,-r)}$}
{$\eps |2\Gamma|^{-1/2}\sigma(-r,s)^{-1}$}
{$\alpha q(t)$}
{$\eps |2\Gamma|^{-1/2}\sigma(t,s)^{-1}$}
{$\alpha q(s)$}
where $\bar{t}=\bar{x}+\bar{y}+\bar{\delta}$ and $\bar{r}=\bar{y}+\bar{z}+\bar{\delta}$ and $\bar{s}=\bar{z}+\bar{x}+\bar{\delta}$. We spell out the identity in question and simplify the right-hand side by using formula \eqref{eq_sigmanegativeL} for $\sigma(-r,s)$ and cancelling $\baromega(\bar{x}+\bar{y}+\bar{r}+\bar{s},r,-r)$ since $\bar{x}+\bar{y}+\bar{r} +\bar{s}=0$ in $\Gamma/2\Gamma$:
\begin{align*}
&\alpha q({t})\sigma({t},{s})^{-1}\alpha q({s})\\
&=\eps |2\Gamma|^{-1/2}\sum_{r\in \bar{y}+\bar{z}+\bar{\delta}} \sigma(t,r)^{-1}
\cdot \sigma(-r,s) \cdot q(r)^{-1}\baromega(\bar{x}+\bar{y}+\bar{r},r,-r) \\
&=\eps |2\Gamma|^{-1/2}\sum_{r\in \bar{s}+\bar{t}+\bar{\delta}} \sigma({t},{r})^{-1}
\sigma({r},{s}) q({r})^{-1}.
\end{align*}
If we substitute $t=-l$ and use formula \eqref{eq_qminus} for $q(-l)$ and formula \eqref{eq_sigmanegativeL} for $\sigma(-l,s)$ and $\sigma(-l,r)$, we collect an additional factor $\baromega(\bar{l}+\bar{\delta},l,-l)\omega(r,l,-l)\omega(s,l,-l)$, which again cancels due to the condition $\bar{t}+\bar{r}+\bar{s}+\bar{\delta}=0$ in $\Gamma/2\Gamma$. Then the hexagon identity reduces again to the modified Gauss sum that we have proved for the inverse hexagon identity above:
\begin{align*}
\alpha q(l)\sigma(l,{s})\alpha q({s})
&=\eps |2\Gamma|^{-1/2}\sum_{r\in \bar{s}+\bar{t}+\bar{\delta}} \sigma(l,{r})
\sigma({r},{s}) q({r})^{-1}.
\end{align*}
This concludes the proof of the hexagon identities.

\smallskip

\noindent
\emph{$G$-Ribbon Structure:} We shall verify that the following defines a ribbon twist on $\LM(\Gamma,\sigma,\omega,\bar{\delta},\eps\,|\,q,\alpha,\beta)$:
\begin{equation*}
\theta_{\C_a}=\sigma(a,a),\quad\theta_{\X^{\bar x}}=\beta
\end{equation*}
for $a\in\Gamma$ and $\bar{x}\in\Gamma/2\Gamma$, with a choice of $\beta=\pm\alpha^{-1}$ and $\alpha^2=\eps |\Gamma|^{-1/2}\sum_{a\in \bar{\delta}} q(a)^{-1}$ as above. More precisely, we check the defining condition in \autoref{def_Gtwist}:
\begin{equation*}
\theta_{\C_{a+b}}\theta_{\C_a}^{-1}\theta_{\C_b}^{-1}
=\sigma(b,a)\sigma(a,b)
\end{equation*}
for $a,b\in\Gamma$. This is a standard calculation for abelian $3$-cocycles. In our case, the coboundary formula in \autoref{ass_strongdisc} applied to both sides of $\sigma(a+b,a+b)$ gives $\sigma(a,a)\sigma(a,b)\sigma(b,a)\sigma(b,b)$, and the further terms $\omega(a+b,a,b)\omega(a,a,b)\omega(b,a,b)=1$. Then, consider, for $a\in\Gamma$, $\bar{x}$ in $\Gamma/2\Gamma$,
\begin{equation*}
\theta_{\X^{\bar a+\bar x}}\theta_{\C_a}^{-1}\theta_{\X^{\bar x}}^{-1}
=\baromega(\bar{a}+\bar{x},a,-a)
\cdot \baromega(\bar{x}+\bar{a},a,-a)q(a)^{-1}
\cdot q(a)^{-1}.
\end{equation*}
This holds because on the left-hand side the twists on $\X^{\bar x}$ and $\X^{\bar a+\bar x}$ are equal and cancel and because on the right-hand side the tensor structure and the additional~$\omega$ in the braiding cancel, leaving $q(a)^{-2}=\sigma(a,a)^{-1}$. Then, we look at
\begin{equation*}
\theta_{\X^{\bar x+\bar a}}\theta_{\X^{\bar x}}^{-1}\theta_{\C_a}^{-1}
=q(a)^{-1} \baromega(\bar{a}+\bar{x},a,-a)
\cdot \baromega(\bar{x}+\bar{a},a,-a)q(a)^{-1}.
\end{equation*}
This holds analogously to the previous case. Finally, we verify
\begin{equation*}
\theta_{\C_t}\theta_{\X^{\bar x}}^{-1}\theta_{\X^{\bar y}}^{-1}
=\alpha q(t)\cdot \alpha q(t)
\end{equation*}
for $\bar{x},\bar{y}\in\Gamma/2\Gamma$ and for any $t\in\bar x+\bar y+\bar{\delta}$. Indeed, this holds because the $\alpha^2$ on both sides cancel and $q(t)^2=\sigma(t,t)$.

This concludes the proof of the $\Z_2$-ribbon structure on $\LM(\Gamma,\sigma,\omega,\bar{\delta},\eps\,|\,q,\alpha,\beta)$.

\smallskip

\noindent
\emph{Pseudo-Unitarity:} We finally verify the assertion that these choices of ribbon structure together with the rigid structure have positive quantum dimensions if and only if $\alpha\beta=\eps$, which is completely analogous to \autoref{cor_TYpseudounitary}. For the objects $\C_a$, $a\in\Gamma$, the quantum dimension is $1$. For the objects $X^{\bar{x}}$, $\bar{x}\in\Gamma/2\Gamma$, with the rigid structure and ribbon structure given above, the quantum dimension of $\X^{\bar{x}}$ is
\begin{align*}
\dim(\X^{\bar{x}})
&=\eval_{\X^{\bar{x}}}\circ c_{\X^{\bar{x}},\X^{-\bar{x}-\bar{\delta}}} \circ (\theta_{\X^{\bar{x}}}\otimes \id_{\X^{-\bar{x}-\bar{\delta}}})\circ \coeval_{\X^{\bar{x}}} \\
&=\eps |\Gamma|^{1/2}\cdot (\alpha\,q(0))\beta \id_{\X^{\bar{x}}}\\
&=\eps (\alpha\beta) |\Gamma|^{1/2}\id_{\X^{\bar{x}}},
\end{align*}
independently of $\bar{x}$. This concludes the proof of \autoref{thm_evenTY}.
\end{proof}


\bibliographystyle{alpha_noseriescomma}
\bibliography{references}{}

\end{document}